\documentclass[11pt, reqno]{amsart}
\usepackage{paper_style}
\usepackage[csfont, paper]{std_math}
\usepackage{local}
	
\IfFileExists{Sections/abstract.tex}{}{}

\hypersetup{pdfauthor={Konstantin Andritsch}
	,pdftitle="Density of almost squares of horospherical orbits in non-uniform quotients of SL2xSL2"
    ,urlcolor=blue
	,citecolor=red
	,linkcolor=blue
	,colorlinks=true
	,colorlinks=true
}

\begin{document}

	%auto-ignore
\title{Density of almost squares of horospherical orbits in non-uniform quotients of $\SL_2(\RR)\times\SL_2(\RR)$}

\author{Konstantin Andritsch}
\address{Department of Mathematics, ETH Zürich, Zürich, Switzerland}
\email{konstantin.andritsch@math.ethz.ch}
%\email[Noy Soffer Aranov]{noysofferaranov@math.utah.edu}
%\email[Zuo Lin]{zul003@ucsd.edu}
%\email[Yuping Ruan]{ruanyp@northwestern.edu}
%\email[Jiyao Tang]{jiyaotang2027@u.northwestern.edu}

\thanks{The author gratefully acknowledges support from the Swiss National Science Foundation (grant 10003145).}

\subjclass[2020]{Primary: 37A17~;~Secondary: 22F30}
%\keywords{equidistribution}
%\date{\today.}
	
	%auto-ignore
\begin{abstract}	
    Let $\Gamma\subset\SL_2(\RR)\times\SL_2(\RR)$ be an irreducible, non-uniform lattice and define the space $X = \SL_2(\RR)\times\SL_2(\RR)/\Gamma$. Let $U$ be the standard horospherical subgroup in $\SL_2(\RR)\times\SL_2(\RR)$. We show that for every $\delta>0$ and $x\in X$ with dense $U$-orbit, the $U$-orbit evaluated at almost squares, $\{(u_{(n^{2-\delta},m^{2-\delta})}~:~n,m\in\NN\}\cdot x$, is dense in $X$.  
    The main idea of the proof is to shadow large periodic $U$-orbits and reduce the statement to a density statement within the periodic $U$-orbit, \ie a statement for the $2$-torus $\TT^2$. This then follows from an effective version of Weyl's inequality to deduce the result. The effective shadowing of periodic $U$-orbits is achieved using tools from homogeneous dynamics, namely quantitative non-divergence of horospherical subgroups, recurrence under the diagonal flow and effective equidistribution of expanding horospherical subgroups.    
    
    Our approach follows the strategy of the work \cite{KR25} of \citeauthor{KR25} who established density of almost squares of the horocycle flow for non-uniform lattices in $\SL_2(\RR)$.
\end{abstract}
    
	\maketitle
    
	%%%%%%%%%%%%%%%%%%%%%%%%%%%%%%%%%%%%%%%%%%%%%%%%%%%%%%%%%%%

	\section{Introduction}

The study of unipotent subgroups is a classical topic in homogeneous dynamics. 
By the seminal papers of \citeauthor{Ra91a} \cite{Ra91a,Ra91b} orbit closure's and invariant probability measures of unipotent subgroups are by now well-understood.
More than that, even effective equidistribution statements for horospherical orbits \cite{KM96} exist.
Due to their often algebraic nature it is natural to study horospherical orbits at a discrete set of times. For example, in \cite{Ma10, EMSS16} the authors study and prove equidistribution of primitive rational points on expanding horospheres in $\SL_n(\RR)$, and \cite{Lu21} extends these results to products of $\SL_2(\RR)$. 
While equidistribution of the full set of integer times $\NN$ of the horocycle flow in quotients of $\SL_2(\RR)$ follows from the corresponding continuous statements, much less is known once one shifts to a sparser subset of $\NN$. Margulis conjectured that the horocycle flow at the subset of primes should be equidistributed, while Shah conjectured the same should be true for the subset $\{n^{1+\eta}\st n\in\NN\}$ for some $\eta >0$. \citeauthor{Ve10} \cite{Ve10} was able to make significant progress on the conjecture of Shah by proving equidistribution of the horocylce flow when evaluated at times $\{n^{1+\eta}\st n\in\NN\}$ for some small $\eta >0$, for \highlight{cocompact} lattices. The range of $\eta$ was later improved by \citeauthor{FF16} \cite{FF16} to achieve the range $0 < \eta < 1/12$. \citeauthor{SU15} established certain non-concentration properties of the horocycle flow at prime times \cite{SU15}, and \citeauthor{McA19} generalized their approach to obtain asymptotic distribution results of horospherical orbits evaluated at almost-primes (integers with a bound on the number of prime factors). 

More recently, \citeauthor{FKR24} \cite{FKR24} made significant progress towards both conjectures by proving equidistribution of the horocycle flow when evaluated at $2$-primes, that is integers with at most $2$ prime factors. Their methods rely on a celebrated breakthrough of \citeauthor{LMW25} \cite{LMW25} providing quantitative ergodicity for the square of the horocycle flow.
Finally, \citeauthor{KR25} \cite{KR25} established density of almost-squares of the horocycle flow  and density at prime times conditional under the Hardy-Littlewood conjecture. 

This paper extends the results in \cite{KR25} to the study of horospherical orbits evaluated at almost-squares in non-uniform quotients of $\SL_2(\RR)\times\SL_2(\RR)$. More precisely, let $G=\SL_2(\RR)\times\SL_2(\RR)$, assume that $\Gamma\leq G$ is a non-uniform, irreducible lattice and set $X=\tquot{G}{\Gamma}$. Recall that $\Gamma\leq G$ is irreducible, if the projection of $\Gamma$ to each factor is dense. Let
\[ U = \left\{\left(\smat{1}{r_1}{}{1},\smat{1}{r_2}{}{1}\right)\st r_1,r_2\in\RR\right\} \]
be the standard horospherical subgroup in $G$. 
In this paper we prove the following \namecref{thm:main}.
\begin{theorem}\label{thm:main}
    Suppose $x_0 \in X$ is an initial point so that the $U$-orbit $Ux_0$ is dense in $X$. Then for all $1\geq \delta > 0$, the set
    \begin{align*}
        \{(u_{m^{2 - \delta}} \times u_{n^{2 - \delta}}) \st m,n \in \NN\}\cdot x_0
    \end{align*}
    is dense in $X$. 
\end{theorem}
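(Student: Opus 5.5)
The plan is to follow the shadowing strategy of \cite{KR25}, adapted to the rank-two horospherical group $U\cong\RR^2$ and the diagonal group $A=\{a_t=(\mathrm{diag}(e^{t_1/2},e^{-t_1/2}),\mathrm{diag}(e^{t_2/2},e^{-t_2/2}))\}$. Fix a target $y\in X$ and $\varepsilon>0$; we must find $m,n$ with $(u_{m^{2-\delta}}\times u_{n^{2-\delta}})x_0$ within $\varepsilon$ of $y$.

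\textbf{Step 1: shadowing periodic $U$-orbits.} Since $\Gamma$ is non-uniform and irreducible, $X$ has finitely many cusps and there are periodic $U$-orbits $Uz$, with stabiliser a lattice $\Lambda_z\subset\RR^2$. Pushing by $a_{-t}$ (with $t=(t_1,t_2)$ large) gives periodic orbits $Ua_{-t}z$ of covolume $\asymp e^{t_1+t_2}$, which by effective equidistribution of expanding horospheres \cite{KM96} are effectively equidistributed. The plan is to show that for a sequence of large scales $T$, the piece $\{u_r x_0: r\in [T_1,T_1+L_1]\times[T_2,T_2+L_2]\}$ stays within a small distance $\eta$ of a corresponding piece of some large periodic orbit $U x'$, $x'=a_{-t}z$. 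To get this, write $u_r x_0 = a_{-s}u_{r'}a_s x_0$ and use that, since $Ux_0$ is dense (hence $x_0$ is not itself on a periodic or intermediate closed orbit), quantitative non-divergence for $U$ together with recurrence of the $A$-action shows that for suitable $s$ the point $a_s u_{r_0} x_0$ lies very deep in a cusp, i.e. very close (in an explicit power of the cusp height) to a periodic $U$-orbit of bounded-type period. Conjugating back by $a_{-s}$ expands that proximity only in the $U$-direction (which is absorbed by re-parametrising $r$) while contracting the opposite directions, so on a box of side lengths $\asymp e^{s_i}\cdot(\text{depth})$ the orbit of $x_0$ shadows $Ux'$ with error $\eta$. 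The density of $Ux_0$ is what guarantees arbitrarily deep cusp excursions along the relevant $A$-translates, and irreducibility is needed so both coordinates diverge simultaneously.

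\textbf{Step 2: reduction to $\TT^2$.} On the shadowing box, identify $Ux'$ with the torus $\RR^2/\Lambda_{x'}$, where $\Lambda_{x'}$ is (up to bounded distortion) diagonal with periods $P_1\asymp e^{t_1},P_2\asymp e^{t_2}$. Equidistribution of $Ux'$ (Step 1) gives an open set $B\subset \TT^2$ of definite measure whose points map within $\varepsilon/2$ of $y$. It then suffices to find $m,n$ with $m^{2-\delta}\in[T_1,T_1+L_1]$, $n^{2-\delta}\in[T_2,T_2+L_2]$ and $(m^{2-\delta}/P_1, n^{2-\delta}/P_2)\bmod 1\in B$ (after shifting by the base point). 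Since the problem splits as a product, this is a product of two one-dimensional problems: show $\{m^{2-\delta}/P\bmod 1\}$ for $m$ in the window is $\varepsilon$-dense (indeed equidistributed) in $\TT$.

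\textbf{Step 3: effective Weyl.} Apply an effective van der Corput/Weyl estimate to the exponential sums $\sum_{m} e(k m^{2-\delta}/P)$ over $m\in[M,M+H]$ with $M^{2-\delta}\approx T$; the phase has second derivative $\asymp k M^{-\delta}/P$, giving a bound $o(H)$ once $H$ is large relative to $(P M^{\delta})^{1/2}$ but the window length $L\asymp H M^{1-\delta}$ is inside the shadowing box. \textbf{The main obstacle} is the compatibility of scales: the shadowing box from Step 1 must be long enough (relative to the periods $P_i$ and to $T^{\delta/(2-\delta)}$) for Weyl's inequality to give equidistribution, while the shadowing error stays below $\eta$. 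I would first try to choose the depth of the cusp excursion as a free parameter, showing via non-divergence and recurrence that arbitrarily deep excursions occur at times $s$ with $e^{s}$ only polynomially related to $T$, and then balance exponents; the hypothesis $\delta>0$ is exactly what leaves room (the case $\delta=0$ would require squares, where the second derivative no longer decays and arithmetic obstructions appear).
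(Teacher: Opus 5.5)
\textbf{Gap: Step~3 takes its equidistribution from the wrong source.} Your architecture matches the paper's: shadow a large periodic $U$-orbit, pass to a torus, finish with an exponential-sum bound. But the ``compatibility of scales'' you leave open cannot be closed the way you propose.

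Over a window of length $L$ near time $T$, the map $m\mapsto m^{2-\delta}$ deviates from an affine function of $m$ only by $\asymp L^2/T$. Your condition $L\gg (PT)^{1/2}$ says exactly that this deviation must be large compared with the period. For a periodic orbit of fixed size, the shadowing windows you can secure near time $T$ have length at most about $T^{1/2}$. The paper gets $T^{1/2-\delta/5}$, and on such windows the deviation is $O(T^{-2\delta/5})$, cf.\ \Cref{lem:linearize-argument}.

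Letting $P$ vary through deeper cusp excursions does not help, at least heuristically. The $V$-offset-$s$ neighbourhood of a periodic orbit of area $P_1P_2$ has measure $\asymp s_1s_2P_1P_2$, and each visit of $u_rx_0$ lasts an $r$-area $\asymp (s_1s_2)^{-1}$. So for a typical dense orbit $\prod_i s_i^2P_iT_i$ is bounded below. With $L_i\le 1/s_i$, your requirement reads $s_i^2P_iT_i\ll 1$ for both $i$, which contradicts this.

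Moreover, the phase you estimate is not the true one. If $u_{r_0}x_0=a_{\log t}v_s\omega$, then by \eqref{eq:first-obs} $u_{r_0+r}x_0$ follows $u_{\varphi(r)}\omega$ with $\varphi(r)=r/(t+sr)$. Across the window this shear deviates from linear by $\asymp sL^2$, which is a factor $\asymp sT\to\infty$ more than the curvature of $m^{2-\delta}$. It is not ``absorbed by re-parametrising $r$''; it dominates the phase.

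\textbf{Missing idea: use the shear itself, with two-sided control of the $V$-offset.} The paper fixes one large well-rounded periodic orbit $U\omega$. In \Cref{prop:approaching-omega} it produces $u_{r}x_0=a_{\log t'}v_{s'}\omega$ with $r_i\asymp R_i$ and $s'_i\in[\tfrac12R_i^{-1/2+\delta/10},R_i^{-1/2+\delta/10}]$. Quantitative non-divergence of $a_{-\log t}v_s\omega$ over a $V$-box at exactly this scale gives a target $z_0$ in the thick part. $A$-recurrence of $x_0$ together with effective equidistribution of expanding horospheres then lets the orbit enter a shrinking box around $z_0$.

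The paper next Taylor-expands $\varphi_{t,s}$ to degree about $5/\delta$, with coefficients $(-1)^\ell s^{\ell-1}t^{-\ell}$ (\Cref{lem:taylor}), and linearises $m^{2-\delta}$. It then applies Weyl's inequality (\Cref{lem:weyls-inequality}) to a single coefficient $\tilde c_{\ell_0}$ with $|\tilde c_{\ell_0}|(\#I)^{\ell_0}\asymp R^{\eta}$, $0<\eta\le\delta/10$. The lower bound on $s'$ is what keeps this coefficient from being too small. Your cusp-excursion mechanism gives no such two-sided control. Also, being deep in a cusp does not by itself put you close to a periodic $U$-orbit: depth measures the length of the short vector, while proximity to a periodic orbit is a condition on its direction.

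\textbf{Step~2 is false as stated.} By \Cref{lem:structure-of-U-orbits}, every nonzero $(x,y)\in\Lambda_\omega$ has $xy\neq 0$. So for irreducible $\Gamma$ a periodic $U$-orbit is never a product of coordinate circles, and density does not reduce to two one-dimensional problems. What does factor is each Fourier coefficient: for a dual frequency $(b_1',b_2')$, the sum over $I_1\times I_2$ is a product of two one-dimensional sums. You then bound the factor with the larger $|b_i'|$, which well-roundedness keeps comparable to $L_\omega^{-1/2}$.
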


\begin{remark}
    The condition about density of the $U$-orbit $Ux_0$ of the initial point $x_0\in X$ is only a weak constraint. Indeed, by Ratner's orbit closure theorem \cite{Ra91b} for $\SL_2(\RR)\times\SL_2(\RR)$, there is a periodic-dense dichotomy of $U$-orbits, \ie every $U$-orbit is either periodic or dense. The reason is that there are no algebraic subgroups $U\lneq L\lneq G$, such that $L$-orbits in $X$ support finite $L$-invariant measures.
    Clearly, if $x_0$ has a periodic $U$-orbit then $\{(u_{m^{2 - \delta}} \times u_{n^{2 - \delta}}) \st m,n \in \NN\}\cdot x_0$ cannot be (arbitrarily) dense. Thus, the condition about density of $Ux_0$ is necessary but only a soft constraint.

    The same statement is true in case that $\Gamma$ is a reducible lattice of the form $\Gamma_1\times\Gamma_2$, where $\Gamma_1$ and $\Gamma_2$ are non-uniform. However, then the result is a direct consequence of \cite{KR25} applied to each factor individually. Thus, we chose to only treat the case of irreducible lattices.
\end{remark}

\subsection{Outline of the proof}
The strategy of the proof of \Cref{thm:main} is as follows. In rough terms, the density of the set $\{(u_{m^{2 - \delta}} \times u_{n^{2 - \delta}}) \st n \in \NN\}\cdot x_0$ is reduced to a density statement of $\{(u_{m^{2 - \delta}} \times u_{n^{2 - \delta}}) \st m,n \in \NN\}\cdot \omega$ where $\omega\in X$ is a point with a large, periodic $U$-orbit. 

To achieve the reduction to the density statement for periodic $U$-orbits, we need to have a quantitative control of the distance between the $U$-orbit $Ux_0$ and the periodic $U$-orbit $U\omega$. More precisely, given $\delta > 0$, we show (\Cref{prop:approaching-omega}) that a given point $\omega$ with periodic $U$-orbit can be approximated at distance of the order $R^{-1/2}$ at scale $\br\in[\frac{D}{4}R,DR]^2$, for some absolute constant $D$ and some large $R\in\RR$. This is achieved using three key ingredients from homogeneous dynamics. These are quantitative non-divergence of horospherical subgroups (\Cref{lem:quan-non-div}), $A$-recurrence for $A$ the positive diagonal subgroup in $G$ (\Cref{lem:A-recurrence}) and effective equidistribution of expanding horospherical subgroups (\Cref{lem:exp-horospheres}).

Once a point $\omega$ with periodic $U$-orbit is approximated at the correct scale, it is necessary to ensure that the relevant $U$-orbits stay close (\Cref{cor:staying-close-to-closed-orbits}) to each other for a range of size $[0,R^{1/2-\delta/5}]$. Notably, due to the offset between $u_\br$ and $\omega$ in the opposite horospherical direction, there is a shear $\varphi(\br)$ between the parameters of the $U$-orbits of $u_\br x_0$ and $\omega$. Hence, in fact, one needs to prove a density result for the set $\{(u_{\varphi(m^{2 - \delta},n^{2 - \delta})}) \st m,n \in \NN\}\cdot \omega$. 

In order to address this shear, we first approximate $\varphi$ with a Taylor polynomial of large enough degree \Cref{lem:taylor}. We then linearize the argument in the parameters $m$ and $n$ (\Cref{lem:linearize-argument}) and reduce the required density result to a density result on the ordinary $2$-torus $\TT^2 = \tquot{\RR^2}{\ZZ^2}$. This is then a consequence of an effective version of Weyl's inequality (\Cref{lem:weyls-inequality}).

Since large periodic horospherical orbits are known to equidistribute in $X$, one can choose a large enough periodic $U$-orbit $U\omega$ in the beginning, to deduce $\eps$-density of $\{(u_{m^{2 - \delta}} \times u_{n^{2 - \delta}}) \st m,n \in \NN\}\cdot x_0$ for any chosen $\eps>0$.

\begin{remark}
    The above argument only works for almost square times $(m^{2-\delta},n^{2-\delta})$ because of the following. 
    With the exploited techniques, it is not possible to approach a given point $\omega$ with sufficiently large periodic $U$-orbit, at distance of the order $R^{-1/2}$ within the scale $[D/4R,DR]^2$ of the $U$-orbit at $x_0$, and stay close to this orbit for a large enough time frame ensuring that there are enough elements of the form $(m^{2},n^{2})$ within that time frame. Indeed, although we can guarantee that if $\br\in[D/4R,DR]^2$ is chosen so that $u_{\br}x_0$ is close to $\omega$, then the relevant $U$-orbits stay close to each other on a time frame of size $R^{1/2}$, it is apriori not even clear whether the set
    \[ I = \{(m^2,n^2)\st m,n\in \ZZ\} \cap [r, r + R^{1/2}]^2 \]
    is empty or not. The presence of $\delta>0$ allows to build in enough margin in order to show that the corresponding set is not only not empty, but arbitrarily large, if $R$ is chosen large enough. Thus, the density statement about almost squares is precisely at the edge of the proof strategy carried out in this paper. 
\end{remark}

\subsection{Open Questions}
\subsubsection*{Uniform lattices} The most undesirable constraint in \Cref{thm:main} is the restriction to non-uniform lattices. Indeed, it should be easier for an infinite set to become $\eps$-dense in a compact quotient of $G$. Notice that the restriction to cocompact lattices in \cite{Ve10} provides further evidence for that. However, the lack of periodic horospherical orbits in such quotients renders the methods in the proof of \Cref{thm:main} irrelevant, as it crucially depends on the existence of $\eps$-dense periodic orbits.
\subsubsection*{Equidistribution} One direction of interest in extending \Cref{thm:main} would be to prove not just density of the set $\{(u_{m^{2 - \delta}} \times u_{n^{2 - \delta}}) \st m,n \in \NN\}\cdot x_0$, but equidistribution within $X$. This would be in accordance with the behavior of the horocycle flow evaluated at the sparse subset $\{n^{1+\eta}\st n\in \NN\}$ discussed in \cite{Ve10}.
\subsubsection*{Optimal explicit effective statement} Another direction of interest would be to obtain an explicit effective version of \Cref{thm:main}, that is, explicitly relating the density parameter $\eps$, or the size $L$ of the required periodic $U$-orbit, with the size of the range parameter $R$. Explicit and close to optimal effective equidistribution statements about large periodic $U$-orbits are known (\cite{KM96,Ve10}). We remark that one can extract a relation of the form $R^\eta = L$ for some $\eta>0$ from the proof of \Cref{thm:main}. However, this relation is presumably far from optimal.
\subsubsection*{Squares and (almost) primes} Finally, we address the problem of further sparsening the discrete sequence of evaluations of the $U$-orbit. We already discussed that the density of the set of squares $\{(u_{m^2},u_{n^2})\st m,n\in\NN\}.x_0$, is on the edge of the proof strategy and seems to be out of reach with the techniques currently available. Nevertheless, it would certainly be interesting to study. As in \cite{KR25} it would also be interesting to examine the set of primes $\{(u_{p},u_{q})\st p,q\text{ primes}\}\cdot x_0$ or almost primes $\{(u_{p_1p_2},u_{q_1q_2})\st p_1,p_2,q_1,q_2\text{ primes}\}\cdot x_0$.
    
\subsection{Notation}

    Throughout this paper, we use bold face letters (\eg $\br$, $\bR$, $\bs$, $\bt$) to denote vectors in $\RR^2$. We write $\br = (r_1,r_2)$ to denote the individual components of a vector in $\RR^2$. Moreover, for functions (\eg $\log$) we use the compressed notation $\log\br$ to denote the vector with components $(\log(r_1),\log(r_2))$.
    Moreover, for $\br,\bR\in\RR^2$ we will throughout use the interval notation $\rect[\br]{\bR}$ to denote the subset $[r_1,R_1]\times[r_2,R_2]$ of $\RR^2$.

    Denoting by $U,A,V$ the standard upper triangular, positive diagonal and lower triangular subgroups in $\SL_2(\RR)\times\SL_2(\RR)$, respectively, we use the notations
    \[ u_{\bl} = (u_{\ell_1},u_{\ell_2})\in U, \quad a_{\bl} = (a_{\ell_1},a_{\ell_2})\in A  \quad \text{and} \quad v_{\bl} = (v_{\ell_1},v_{\ell_2})\in V, \]
    for any $\bl = (\ell_1,\ell_2)\in\RR^2$.

    For $\beta>0$ we write $B_\beta$ for the $\beta$-box around the identity in $G=\SL_2(\RR)\times\SL_2(\RR)$ in the coordinates $G=UAV$, that is
    \[ B_\beta = \{g=u_{\br}a_{\bt}v_{\bs} \st \br,\bt,\bs\in [0,\beta]^2\} \subset \SL_2(\RR)\times\SL_2(\RR). \]
    We also write $B^L_\beta$ for the $\beta$-box around the identity in any subgroup $L\subseteq\SL_2(\RR)\times\SL_2(\RR)$ of the form $U,A,V, UA$ or $AV$.

    Finally, throughout the paper, especially in \Cref{sec:equi-exp-hor}, we use the notion of Sobolev norm $\Sob$. For any $f\in C^\infty_c(X)$ we put
    \[ \Sob(f) = \sum_{\operatorname{ord}(D)\leq \sobdim}\norm{Df(g)}[\infty], \]
    where the differential operator $D$ ranges over all monomials of order $\leq\sobdim$ in a basis of $\Lie(G) = \lieg$. 
    
    Properties of the Sobolev norm used in the paper include the following. For a fixed Riemannian metric $d(\cdot,\cdot)$ on $G$ and for $g$ in a fixed compact set, for any $f\in C^\infty_c(X)$ and $x\in X$, it holds that $\abs{f(gx)-f(x)} \ll \operatorname{Sob}_1(f)d(g,e)$. 
    For $\lie = \Lie(G)$, let $\norm{\cdot}[op]$ denote the operator norm of $\Ad\colon\lieg\to\lieg$. Another elementary property of Sobolev norms (\cite[Lemma 2.2]{Ve10}) is 
    \[ \Sob(f_1\cdot f_2)\ll_\sobdim \Sob(f_1)\Sob(f_2), \quad \Sob(g\cdot f_1) \ll_\sobdim \norm{g}[op]\Sob(f_1) \]
    for all $f_1,f_2\in C^\infty_c(X)$ and $g\in G$.
    %auto-file
\subsection*{Acknowledgments}
The starting point of this paper was the \highlight{Reading Groups in Analysis} workshop in the Department of Mathematics, University of Pennsylvania, held in August 4-8, 2025. The author would like to thank for Noy Soffer Aranov, Zuo Lin, Yuping Ruan, Jiyao Tang for the pleasant group atmosphere during discussions and for the collaboration in the initial phase of this project. 
Moreover, the author expresses his deep gratitude to Adam Kanigowski and Amir Mohammadi for their mentoring during the workshop. In particular, to Adam Kanigowski for outlying the proof of \cite{KR25} and encouraging us to look into the setting of the present paper. 
The stay at the workshop was sponsored by the U.S. National Science Foundation and the Department of Mathematics, University of Pennsylvania, to whom the author is sincerely thankful.
In addition, the author is grateful to Manfred Einsiedler for his continuous support and helpful discussions during the preparation of this paper.
	\section{Approaching periodic \texorpdfstring{$U$}{U}-orbits}

    Let $G = \SL_2(\RR)\times\SL_2(\RR)$ and define the three subgroups $U, A, V$ of $G$ by    
    \begin{align*}
        U &= \bigg\{(u_{r_1}, u_{r_2}) = \left(\begin{pmatrix}
            1 & r_1\\
             & 1
        \end{pmatrix}, \begin{pmatrix}
            1 & r_2\\
             & 1
        \end{pmatrix}\right)\st r_1,r_2\in\RR\bigg\}, \\
        A &= \bigg\{(a_{t_1}, a_{t_2}) = \left(\begin{pmatrix}
            e^{t_1/2} & \\
             &e^{-t_1/2}
        \end{pmatrix}, \begin{pmatrix}
            e^{t_2/2} & \\
             & e^{-t_2/2}
        \end{pmatrix}\right)\st t_1,t_2\in\RR\bigg\},\\
        V &= \bigg\{(v_{s_1}, v_{s_2}) = \left(\begin{pmatrix}
            1 & \\
            s_1 & 1
        \end{pmatrix}, \begin{pmatrix}
            1 & \\
            s_2 & 1
        \end{pmatrix}\right)\st s_1,s_2\in\RR\bigg\}.
    \end{align*}
    Notice that $U$ (resp. $V$) is the unstable horospherical subgroup for any element $a_\bt\in A$ with $\bt\in\RR_{\geq0}$ (resp. $-\bt\in\RR_{\geq0}$). We define the homomorphism $\varphi \colon U\to \RR^2$, $\varphi(u_{(r_1,r_2)}) = (r_1,r_2)$.
    Moreover, we note that $U\times A\times V \to G$, $(u,a,v)\to uav$ is a surjective homomorphism.
    
    Let $\Gamma \leq G$ be a non-uniform, irreducible lattice. We define $X = \tquot{G}{\Gamma}$ and denote by $\msr_{X}$ the $G$-invariant probability measure on $X$ induced by the Haar measure $\msr_G$ on $G$.

    By a theorem of \citeauthor{Se69} \cite{Se69}, also implied by Margulis arithmeticity theorem \cite{Ma91}, $\Gamma$ is \highlight{arithmetic}, \ie there exists a real quadratic extension $K/\QQ$ with Galois involution $\sigma$ and with ring of integers $\mathcal{O}_K$ such that $\Gamma$ and $\iota(SL_2(\mathcal{O}_K))$ are commensurable, where
    \[ \iota\colon \SL_2(K) \hookrightarrow \SL_2(\RR)\times\SL_2(\RR),~ q\mapsto (q,\sigma(q)) \]
    is the Minkowski embedding. Thus, we set $\mathbf{G} \defeq \Res{K}(\SL_2)$ to be an algebraic group defined over $\QQ$ so that $\mathbf{G}(\RR) \simeq G$, $\mathbf{G}(\QQ)$ is identified with $\SL_2(K)$ and $\Gamma$ is commensurable with $\mathbf{G}(\ZZ)$.
    
\subsection{Structure of periodic horospherical orbits}
    First, we discuss the structure of \highlight{periodic} $U$-orbits on $X$, \ie orbits $U\omega$, for $\omega=g\Gamma\in X$ such that $U\cap g\Gamma g^{-1}$ is a lattice in the two dimensional subgroup $U$. In this case, we define the lattice 
    \[ \Lambda_\omega \defeq \varphi(U\cap q\Gamma q^{-1}) \subseteq \RR^2. \]
    Further, we define the \highlight{size} $L_\omega$ of the periodic orbit a $U\omega$ as the covolume of the lattice $\Lambda_\omega$ in $\RR^2$, that is
    \[ L_\omega \defeq \operatorname{covol}(\Lambda_\omega). \]
    Finally, for any periodic orbit $U\omega$, we define the unique $U$-invariant probability measure $\mu_\omega$ on $U\omega$ as the pullback of the normalized Lebesgue measure on $\RR^2/\Lambda_\omega$. 

    We recall the structure of periodic $U$-orbits in $X$ and give a short proof.
    \begin{lemma}\label{lem:structure-of-U-orbits}
        Let $\omega\in X$ be given. Then there is a finite subset $\Xi\subset \mathbf{G}(\QQ)$ such that
        \[ U\omega \text{ is periodic} \quad\Leftrightarrow\quad  \text{there exists } a\in A, q\in\Xi \text{ such that } U\omega = Uaq\Gamma. \]
        In particular, $U$-periodic orbits exist and for all $(x,y)\in \Lambda_\omega\setminus\{(0,0)\}$ we have $xy\neq 0$.
    \end{lemma}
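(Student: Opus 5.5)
Write $P=B\times B$ with $B$ the upper triangular Borel subgroup of $\SL_2$, so that $P=AU=N_G(U)$ is the real points of the $\QQ$-parabolic $\Res{K}(B)$. The plan is to translate periodicity of $U\omega$ into a rationality statement about $g$ (where $\omega=g\Gamma$), and then use finiteness of cusps to pick the finite set $\Xi$.

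The first step is to show that if $U\omega$ is periodic then $g\in P\,\mathbf{G}(\QQ)$. Since $g\Gamma g^{-1}\cap U$ is a lattice in $U$, the group $\Delta=\Gamma\cap g^{-1}Ug$ is a lattice in the unipotent group $g^{-1}Ug$. Because $\Gamma$ is commensurable with $\mathbf{G}(\ZZ)$, $\Delta$ contains a finite index subgroup consisting of unipotent elements of $\mathbf{G}(\ZZ)$. Its Zariski closure is a $\QQ$-unipotent subgroup $\mathbf{N}$ whose real points contain a lattice of $g^{-1}Ug\cong\RR^2$, so $\mathbf{N}(\RR)=g^{-1}Ug$ by Borel density for unipotent groups. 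A maximal $\QQ$-split unipotent subgroup of $\mathbf{G}=\Res{K}\SL_2$ is the unipotent radical of a minimal $\QQ$-parabolic, i.e. it is $\gamma^{-1}\Res{K}(N)\gamma$ for some $\gamma\in\mathbf{G}(\QQ)$, since the minimal $\QQ$-parabolics are all $\mathbf{G}(\QQ)$-conjugate to $\Res{K}B$. Dimension count then gives $g^{-1}Ug=\gamma^{-1}U\gamma$, hence $g\gamma^{-1}\in N_G(U)=P=UA$. So $g=ua\gamma$. The main obstacle is this identification. A concrete route that avoids algebraic group theory would be: pick $\delta\in\Delta$ nontrivial, so $\delta\in\SL_2(K)$ is unipotent with fixed line $\ell\in\PP^1(K)$ defining a $K$-rational cusp. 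Then $g\ell$ must be the fixed point $\infty$ of $U$ in each factor, and $\ell=\gamma^{-1}\infty$ for some $\gamma\in\SL_2(K)$.

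The second step is to choose $\Xi$. The cusps $\mathbf{G}(\QQ)/\mathbf{P}(\QQ)\cong\PP^1(K)$ split into finitely many $\Gamma$-orbits (finiteness of the class number, equivalently finitely many cusps of the Hilbert modular lattice). Choose representatives $q_1,\dots,q_k$. Then $\gamma\in\mathbf{P}(\QQ)\,q_i\,\Gamma$, and $\mathbf{P}(\QQ)\subset P=UA$, so $g\Gamma=u'a'q_i\Gamma$ and $U\omega=Ua'q_i\Gamma$. This gives the forward implication.

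The converse will be checked directly. For $\omega=aq\Gamma$ with $q\in\Xi$, we have $U\cap aq\Gamma q^{-1}a^{-1}=a(U\cap q\Gamma q^{-1})a^{-1}$, since $a$ normalizes $U$. Here $U\cap q\Gamma q^{-1}$ is commensurable with $\iota(N(\mathcal O_K))$-type groups, i.e. with $\{(u_x,u_{\sigma(x)}): x\in\mathfrak a\}$ for a fractional ideal $\mathfrak a$, which is a lattice in $\RR^2$ by Minkowski. Hence $U\omega$ is periodic; in particular periodic orbits exist (take $q=e$). Finally, $\Lambda_\omega=\{(\alpha_1x,\alpha_2\sigma(x)): x\in\mathfrak a'\}$ with $\alpha_i=e^{t_i}\neq0$, where $a$ acts by scaling the coordinates. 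For $x\neq0$ we have $\sigma(x)\neq0$, so every nonzero element of $\Lambda_\omega$ has both coordinates nonzero. This will be verified by conjugating with $a$ and noting that $U$-translation does not change $\Lambda_\omega$ since $U$ is abelian.
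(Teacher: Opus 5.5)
Your proof takes the same route as the paper. Borel density (or your more hands-on fixed-point argument) shows that $g^{-1}Ug$ is the real points of a unipotent $\QQ$-subgroup. Conjugacy of minimal $\QQ$-parabolics under $\mathbf{G}(\QQ)$ then puts $g$ in $N_G(U)\,\mathbf{G}(\QQ)$, and finiteness of cusps produces $\Xi$. Both the converse and the claim $xy\neq 0$ come from $U\cap q\Gamma q^{-1}$ being commensurable with the Minkowski image $\{(u_x,u_{\sigma(x)})\}$ of a rank-two subgroup of $K$. Your fractional-ideal description, and your remark that $\Lambda_\omega$ is unchanged under $U$-translates and only rescaled by $a$, are slightly more careful than the paper, which tacitly treats only $\omega=q\Gamma$.

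\textbf{The false step.} You assert $N_G(U)=AU$ and $\mathbf{P}(\QQ)\subset UA$; both are wrong. In fact $N_G(U)=\mathbf{P}(\RR)=MAU$ with $M=\{\pm I\}\times\{\pm I\}$ central. For instance, $\iota(\operatorname{diag}(\sqrt d,1/\sqrt d))\in\mathbf{P}(\QQ)$ has second component $\operatorname{diag}(-\sqrt d,-1/\sqrt d)$, so it does not lie in $UA$. Your forward direction therefore only yields $U\omega=m\,Ua'q_i\Gamma$ for some $m\in M$.

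\textbf{This is not cosmetic.} Take $K=\QQ(\sqrt3)$ and $\Gamma=\SL_2(\mathcal{O}_K)$. There is one cusp, so $\Xi=\{e\}$ is a legitimate choice of representatives. The orbit $U(-I,I)\Gamma$ is periodic but equals no $Ua\Gamma$. Indeed, $(-I,I)\in Ua\Gamma$ would give an upper triangular element of $\Gamma$ whose diagonal unit $\mu$ satisfies $\mu<0<\sigma(\mu)$. That is a unit of norm $-1$, which $\ZZ[\sqrt3]$ does not have.

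\textbf{The repair.} It is cheap, because the lemma only asks for some finite $\Xi$. For $\lambda\in\{\pm1,\pm\sqrt d\}$ put $h_\lambda=\iota(\operatorname{diag}(\lambda,\lambda^{-1}))\in\mathbf{P}(\QQ)$. The signs of $(\lambda,\sigma(\lambda))$ run through all four patterns, so every $m\in M$ can be written $m=a_mh_\lambda$ with $a_m\in A$ and a suitable $\lambda$. Since $m$ is central and $h_\lambda$ commutes with $A$ and normalizes $U$, you get $m\,Ua'q_i\Gamma=U(a_ma')(h_\lambda q_i)\Gamma$. Replacing $\Xi$ by the finite set $\{h_\lambda q_i\}$ fixes the argument. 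The paper's own proof writes $g=aq\gamma$ directly and glosses over the same point, so the same enlargement is needed there.
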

    \begin{proof}
        Let $\mathbf{B}\subset \SL_2$ denote the group of upper triangular matrices in $\SL_2$ and set $\mathbf{P} = \Res{K}(\mathbf{B})$. Then $\mathbf{P}$ is a minimal and maximal $\QQ$-parabolic subgroup of $\mathbf{G}$. By a theorem of Borel and Harish-Chandra, the (right) $\Gamma$ action on $\tlquot{\mathbf{P}(\QQ)}{\mathbf{G}(\QQ)}$ has finitely many orbits. Let $\Xi\subset \mathbf{G}(\QQ)$ be a set of representatives of this action.
    
        Let $U\omega = Ug\Gamma$ be a $U$-periodic orbit. Then by definition, $U\cap g\Gamma g^{-1}$ is a lattice in $U$. This is equivalent to $gUg^{-1} \cap \Gamma$ being a lattice in $gUg^{-1}$. By Borel's density theorem, $gUg^{-1} \cap \Gamma$ is Zariski dense in $gUg^{-1}$ and hence $gUg^{-1}$ is a defined over $\QQ$. This implies that the normalizer of $gUg^{-1}$ is a minimal and maximal $\QQ$-parabolic subgroup of $\mathbf{G}$. Since all $\QQ$-parabolic subgroups of $\mathbf{G}$ are conjugate to each other by elements in $\mathbf{G}(\QQ)$, there is an element $a\in A$, $q\in\Xi$ and $\gamma\in\Gamma$ such that $g = aq\gamma$. It follows that $U\omega = Ug\Gamma = Uaq\gamma\Gamma = Uaq\Gamma$.

        Let $a\in A$ and $q\in \Xi$. Since $a\in A$ normalizes $U$, the orbit $U\omega = Uaq\Gamma$ is periodic if and only if $Uq\Gamma$ is periodic. Since $q^{-1}Uq$ is a connected unipotent $\QQ$-subgroup, $qUq^{-1}\cap\Gamma$ is a lattice.

        Let now $(x,y)\in \Lambda_\omega\setminus\{(0,0)\}$ be given. Then $u_{(x,y)} = q\gamma q^{-1} \in U\cap q\Gamma q^{-1}$ for some $\gamma\in\Gamma$. Since $\gamma = (\gamma_0, \sigma(\gamma_0))$ and $q = (q_0,\sigma(q_0))$ we have
        \[ u_y = \sigma(q_0)\sigma(\gamma_0)\sigma(q_0)^{-1} = \sigma(q_0\gamma_0q_0^{-1}) = \sigma(u_x) = u_{\sigma(x)} \]
        and so $y = \sigma(x)$. It follows that $xy = x\sigma(x) = \operatorname{Nr}(x)\neq 0$ as $(x,y)\neq (0,0)$.
    \end{proof}

    \begin{definition}
        Let $\Lambda\subset\RR^2$ be a lattice. For $i=1,2$, we define the $i$-th successive minimum as
        \[ \lambda_i(\Lambda) \defeq\{\ell~\st \Lambda\text{ contains $i$ linearly independent vectors of $\norm{~\cdot~}[2]$-norm}\leq \ell\}. \]
    \end{definition}
    
    By Minkowski's theorems, we have for any lattice $\Lambda\subset\RR^2$ that
    \begin{align}\label{eq:succ-bounds}
        \frac{1}{\sqrt{2}}\lambda_1(\Lambda)^2 \leq \frac{1}{\sqrt{2}}\lambda_1(\Lambda)\lambda_2(\Lambda) \leq \operatorname{covol}(\Lambda) \leq \frac{\sqrt{3}}{2} \lambda_1(\Lambda)\lambda_2(\Lambda).
    \end{align}
    
    \begin{definition}
        A lattice $\Lambda\subset\RR^2$ is called \highlight{well-rounded}, if $\lambda_1(\Lambda) = \lambda_2(\Lambda)$. We call a periodic $U$-orbit $U\omega \subset X$ \highlight{well-rounded}, if $\Lambda_\omega\subseteq\RR^2$ is well-rounded.
    \end{definition}

    \begin{lemma}
        For any $L\geq 1$, there exist well-rounded periodic $U$-orbits $U\omega$ with $L_\omega \geq L$.
    \end{lemma}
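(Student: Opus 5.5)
We will build the required orbits directly from Lemma~\ref{lem:structure-of-U-orbits}, using the action of $A$ on periodic $U$-orbits.

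Fix some $q\in\Xi$. By Lemma~\ref{lem:structure-of-U-orbits}, the orbit $Uq\Gamma$ is periodic. Let $\Lambda_0\defeq\Lambda_{q\Gamma}\subset\RR^2$ be its lattice.

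\emph{Effect of $A$ on the lattice.} For $\bt=(t_1,t_2)$ we have $a_\bt u_\br a_\bt^{-1}=u_{(e^{t_1}r_1,e^{t_2}r_2)}$. Hence $U\cap a_\bt q\Gamma q^{-1}a_{\bt}^{-1}=a_\bt(U\cap q\Gamma q^{-1})a_\bt^{-1}$, and therefore
\[ \Lambda_{a_\bt q\Gamma}=D_\bt\Lambda_0, \qquad D_\bt=\operatorname{diag}(e^{t_1},e^{t_2}). \]
It follows that $L_{a_\bt q\Gamma}=e^{t_1+t_2}L_0$. Scaling by $a_{(s,s)}$ multiplies the lattice by $e^s$. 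It preserves well-roundedness and multiplies the covolume by $e^{2s}$. So it suffices to find one $\bt$ for which $D_\bt\Lambda_0$ is well-rounded; afterwards we apply $a_{(s,s)}$ with $s$ large to get $L_\omega\ge L$.

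\emph{Finding a well-rounded diagonal image.} Choose linearly independent vectors $v=(x,y)$ and $w=(x',y')$ in $\Lambda_0$. By Lemma~\ref{lem:structure-of-U-orbits}, all four coordinates $x,y,x',y'$ are nonzero. We aim for the basis $D_\bt v$, $D_\bt w$ to be short and of equal length. Two-dimensional lattices with a reduced basis consisting of two vectors of equal length are exactly the well-rounded ones. By Lagrange--Gauss, a basis $b_1,b_2$ with $\norm{b_1}=\norm{b_2}$ and $|\langle b_1,b_2\rangle|\le\frac12\norm{b_1}^2$ realizes $\lambda_1=\lambda_2$.

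So we need $\bt$ such that
\[ e^{2t_1}x^2+e^{2t_2}y^2=e^{2t_1}x'^2+e^{2t_2}y'^2 \quad\text{and}\quad |e^{2t_1}xx'+e^{2t_2}yy'|\le\tfrac12\big(e^{2t_1}x^2+e^{2t_2}y^2\big). \]
Normalize $t_2=0$ and set $\alpha=e^{2t_1}$. The equality becomes $\alpha(x^2-x'^2)=y'^2-y^2$. This has a solution $\alpha>0$ whenever $x^2-x'^2$ and $y'^2-y^2$ have the same nonzero sign.

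The freedom to arrange this comes from replacing $w$ by $w+kv$ with $k\in\ZZ$. We would first take $v$ arbitrary and $w$ with $|x'|$ small and $|y'|$ large relative to $v$. Here we use that $\Lambda_0=\{(x,\sigma(x))\}$ for $x$ in a rank-two $\ZZ$-module of $K$, which is an $\mathcal{O}$-module for an order $\mathcal{O}$. Multiplying by powers of a unit $\epsilon$ with $\epsilon>1>\sigma(\epsilon)>0$ gives lattice vectors $(\epsilon^{-k}x,\sigma(\epsilon)^{-k}\sigma(x))$ whose first coordinate tends to $0$ and whose second coordinate tends to infinity.

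\emph{Main obstacle: the angle condition.} The hardest step is ensuring the inner-product inequality, i.e.\ that the equal-length basis is actually reduced. I would handle this with a continuity argument rather than exact algebra. Consider the continuous family $\alpha\mapsto\Lambda_\alpha=\operatorname{diag}(\sqrt\alpha,1)\Lambda_0$ on $(0,\infty)$. The function $f(\alpha)=\lambda_1(\Lambda_\alpha)/\lambda_2(\Lambda_\alpha)$ is continuous with values in $(0,1]$, so it suffices to show that $f(\alpha)=1$ for some $\alpha$.

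As $\alpha$ moves, the shortest vector changes. Because the lattice has no nonzero vectors on the coordinate axes (Lemma~\ref{lem:structure-of-U-orbits}), the set of $\alpha$ at which a given primitive vector is shortest is a bounded closed interval, and these intervals cover $(0,\infty)$. At a common endpoint of two adjacent intervals, two non-proportional primitive vectors are simultaneously shortest. Hence $\lambda_1=\lambda_2$ there, and $\Lambda_\alpha$ is well-rounded.

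To guarantee that more than one such interval occurs, we use the unit $\epsilon$: the lattice is invariant under $\operatorname{diag}(\epsilon,\sigma(\epsilon))$. So the shortest vector at $\alpha$ and at $\alpha\epsilon^{2}/\sigma(\epsilon)^{2}$ differ by this map, and in particular they are different vectors. Checking that each shortest-vector region is a compact interval is the step I would verify most carefully.
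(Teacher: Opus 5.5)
Your proposal is correct and is essentially the paper's argument. Both deform along a one-parameter diagonal family; your $\operatorname{diag}(\sqrt{\alpha},1)$ is, up to homothety, the paper's $a(\tau,t)$ with $\tau$ fixed. Both use that a lattice vector with both coordinates nonzero eventually stops being shortest, because its length outgrows the Minkowski bound on $\lambda_1$, and both find two independent shortest vectors at the switching point. The paper takes $t_0$ to be the supremum of times at which the initial shortest vector is shortest, while you take a common endpoint of two shortest-vector intervals, and you then scale by $a_{(s,s)}$ to make $L_\omega$ large. Two of your steps are simpler than you expect: the interval property you flagged is immediate because $\alpha x'^2+y'^2$ is affine in $\alpha$, and the unit $\epsilon$ is unnecessary because each region is compact in $(0,\infty)$ and so cannot cover it alone, with local finiteness from discreteness then giving the common endpoint.
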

    \begin{proof}
        Let $U\omega = Uq\Gamma$ be a periodic $U$-orbit. Let $a = a(\tau,t) = (a_{\tau+t},a_{\tau -t}) \in A$ for $\tau,t\in\RR$ and set $\omega' \defeq aq\Gamma$. Then 
        \[ U\cap aq\Gamma q^{-1}a^{-1} = \{ (u_{e^{\tau+t}s_1}, u_{e^{\tau -t}s_2}) \in U \st (u_{s_1},u_{s_2}) \in U\cap q\Gamma q^{-1} \} \] 
        and so it is easy to see that
        \[ L_{\omega'} = \operatorname{covol}\left(U\cap aq\Gamma q^{-1}a^{-1}\right) = e^{2\tau}\operatorname{covol}\left(U\cap q\Gamma q^{-1}\right). \]
        Thus, fixing $\tau\in\RR$ sufficiently large, we get $L_{\omega'} \geq L$. Moreover, notice that the covolume of the lattice $U\cap aq\Gamma q^{-1}a^{-1}$ does not depend on $t$.

        What remains to show is that one can find $t\in\RR$ such that $\Lambda_{\omega'}$ is well-rounded.
        We define the family of lattices $\Lambda_t \defeq \varphi(U\cap a(\tau,t)q\Gamma q^{-1}a(\tau,t)^{-1})\subset\RR^2$ for $t\in\RR$. Let $u\in \Lambda_0$ be such that
        \[ \lambda_1(\Lambda_0)^2 = \norm{u}[2]^{2} = e^{2\tau}(r_1^2 + r_2^2). \]
        for $(r_1,r_2)\in\RR^2$ with $u_{(r_1,r_2)}\in U\cap q\Gamma q^{-1}$. We define the continuous function 
        \[ f_u\colon\RR_{\geq0}\to\RR,\quad f_u(t) = \norm{a(\tau,t)ua(\tau,t)^{-1}}[2]^2 = e^{2\tau}(e^{2t}r_1^2 + e^{-2t}r_2^2). \]
        By \Cref{lem:structure-of-U-orbits} we have $r_1\neq 0$ so $f$ is unbounded and moreover $a(\tau,t)ua(\tau,t)^{-1}\in\Lambda_t$ for all $t\in\RR$. Let
        \[ t_0 = \sup\{t\in\RR\st f_u(t) = \lambda_1(\Lambda_{t})\}. \]
        We claim that $t_0<\infty$ and that $\Lambda_{t_0}$ is a well-rounded lattice, so that $Ua(\tau,t_0)\omega$ is as required.

        Indeed, $f_u$ is unbounded and for large enough $T\in\RR_{\geq0}$, $\rest{f}{[T,\infty)}$ is strictly increasing. Thus, using \eqref{eq:succ-bounds} we obtain $t_0<\infty$. For any $t > t_0$, $f_u(t) > \lambda_1(\Lambda_t)$ and thus there exists $u'_t \in\Lambda_t$ with $\norm{u'_t}[2] < \norm{a(\tau,t)ua(\tau,t)^{-1}}[2]$. Now taking any accumulation point $u'$ of $u't$ with $t\searrow t_0$, we get by discreteness of $\Lambda_t$ for all $t$, that
        \[ u' \neq a(\tau,t_0)ua(\tau,t_0)^{-1} = \lim_{t\searrow t_0}a(\tau,t)ua(\tau,t)^{-1} \]
        and by continuity of $f$, that
        \[ \norm{u'}[2] = \lambda_1(\Lambda_{t_0}) = f(t_0) = \norm{a(\tau,t_0)ua(\tau,t_0)^{-1}}[2]. \]
        Hence, $v$ and $a(\tau,t_0)ua(\tau,t_0)^{-1}$ span $\Lambda_{t_0}$ and $\Lambda_{t_0}$ is well-rounded.  
    \end{proof}
    
\subsection{Quantitative non-divergence of horospherical subgroups}\label{sec:quan-non-div}
    Let 
    \[ \inj(x) = \sup\{\eps \in\RR^+\st g\mapsto gx\text{ is injective on } B^G_\eps\} \]
    denote the \highlight{injectivity radius} on $X$. For every $\eps > 0$ let $X_\eps =\{x\in X \st \inj(x) \geq \eps \}$ be the set of lattices with injectivity radius at least $\eps$. By Mahler's compactness criterion, any compact subset of $X$ is contained in $X_\eps$ for some $\eps>0$. \Cref{lem:quan-non-div} gives a quantitative estimate on how much time an expanding piece of the horospherical orbit $Vy$ for $y\in X$ can spend outside of the set $X_\eps$. Quantitative non-divergence results for polynomial maps 
    where first proved by \citeauthor{KM98} in \cite{KM98} using the notion of $(C,\alpha)$-good functions. We adapt the proof of \cite[Proposition 3.1]{LM23} to the present setting of expanding pieces of the \highlight{full horospherical} subgroup $V$. The proof ultimately relies on the non-divergence results of Margulis, Dani and Kleinbock.
    
    \begin{lemma}\label{lem:quan-non-div}
        There exists an absolute constant $C_0>0$ so that the following holds.\\
        Let $0<\eta,\eps<1$ and $y\in X$. Let $I\subseteq[0,1]^2$ be a rectangle of side lengths at least $\eta$. Then
        \[ \Leb(\{\bs\in I\st a_{-\log\bt}v_{\bs}y \not\in X_{\eps}\}) \leq C_0\eps^{1/2}\Leb(I), \]
        as long as $\bt\in\RR_{\geq 1}^2$ satisfies
        \begin{align*}
            \log t_1+ \log t_2\geq 2\abs{\log(\eta^2\inj(y))} + C_0.
        \end{align*}
    \end{lemma}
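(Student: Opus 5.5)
We adapt the proof of \cite[Proposition 3.1]{LM23} to the full horospherical group $V$. The standard input is the Kleinbock--Margulis quantitative non-divergence theorem for $(C,\alpha)$-good maps, stated in terms of a family of representations. For the $\QQ$-rank one group $\mathbf{G}=\Res{K}(\SL_2)$ these are the vectors $g\gamma q^{-1}e_1$, with $q\in\Xi$ and $\gamma\in\Gamma$, in the representation $\RR^2\oplus\RR^2$. Here $e_1$ is the $\mathbf{P}$-fixed vector of the standard representation of $\SL_2(K)$, embedded via $\iota$.

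Concretely, there is a reduction-theory dictionary, via the finite set $\Xi$ of \Cref{lem:structure-of-U-orbits}. Define $\alpha(x)=\min$ over $q\in\Xi$, $\gamma\in\Gamma$ of the product of norms $\norm{h_1 w}\cdot\norm{h_2\sigma(w)}$, where $x=h\Gamma$ and $w=\gamma q^{-1}e_1$ ranges over this discrete set. The dictionary has two parts. First, $\inj(x)\gg \alpha(x)^{2}$ up to multiplicative constants; more precisely, $x\notin X_\eps$ forces $\alpha(x)\ll\eps^{1/2}$. Second, the minimum is uniquely attained (up to $\mathcal{O}_K^\times$-scaling) once it is small, which is Margulis' lemma on cusps. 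We then use the product of the two norms because the diagonal $(a_{\tau},a_{\tau})$ direction and the torus $\{(a_t,a_{-t})\}$ play different roles, and the product is insensitive to the second. This makes the statement depend only on $\log t_1+\log t_2$.

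Next we verify the $(C,\alpha)$-good and non-contracting hypotheses. For a fixed vector $w=(w_1,w_2)\in\RR^2\oplus\RR^2$, the function $\bs\mapsto \norm{a_{-\log t_1}v_{s_1}w_1}\cdot\norm{a_{-\log t_2}v_{s_2}w_2}$ on $I$ is a product of square roots of quadratic polynomials in $s_1$ and in $s_2$ respectively. It is therefore $(C,1/2)$-good on rectangles, by the one-variable fact applied in each coordinate together with Fubini; this yields the exponent $1/2$ and the constant $C_0$. For the non-contraction hypothesis (the sup over $I$ of each such function is $\geq\rho$, with $\rho$ a fixed small constant), write $y=h\Gamma$ and $w'=hw$. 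The $V$-action adds $s_i$ times the top entry to the bottom entry, and $a_{-\log t_i}$ multiplies the bottom entry by $t_i^{1/2}$ and the top by $t_i^{-1/2}$. Hence
\[
\sup_{s_i\in I_i}\norm{a_{-\log t_i}v_{s_i}w'_i}\gg \max\bigl(t_i^{-1/2}\abs{w'_{i,1}},\, t_i^{1/2}\eta\abs{w'_{i,1}},\, t_i^{1/2}\abs{w'_{i,2}}\bigr)
\]
on an interval of length at least $\eta$. The product over $i$ is then $\gg (t_1t_2)^{1/2}\eta^2\,\norm{w'_1}\norm{w'_2}\cdot(\text{lower order})$. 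Moreover, $\norm{w'_1}\norm{w'_2}\gg \alpha(y)\gg\inj(y)$ when the vector is short; otherwise it is bounded below absolutely. Therefore the condition $\log t_1+\log t_2\geq 2\abs{\log(\eta^2\inj(y))}+C_0$ guarantees that the supremum exceeds $\rho$.

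The main obstacle is the non-contraction estimate, specifically when $w'_{i,1}$ is tiny in one factor but the product $\norm{w'_1}\norm{w'_2}$ is still controlled by $\inj(y)$. Here one must use $\norm{w'_1}\norm{w'_2}\geq\abs{N(\cdot)}$-type lower bounds coming from the arithmetic structure; this is the analogue of $xy=\operatorname{Nr}(x)\neq0$ in \Cref{lem:structure-of-U-orbits}. These show that neither coordinate can be degenerate relative to the other beyond what $\inj(y)$ allows, and they should be established first. Since $\QQ$-rank is one, only one flag is relevant, so no multi-level flag bookkeeping from the general Kleinbock--Margulis theorem is needed. With non-contraction and the good property in hand, the Kleinbock--Margulis theorem gives measure $\leq C_0\eps^{1/2}\Leb(I)$ for the set of $\bs$ where the smallest such product falls below $\eps^{1/2}$, hence a fortiori for the set of $\bs$ with $a_{-\log\bt}v_\bs y\notin X_\eps$.
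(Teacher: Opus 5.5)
\paragraph{Overall.} There is a genuine gap: the comparison between $\inj$ and your height is misstated, and as written the final step gives $\eps^{1/4}$, not $\eps^{1/2}$. The rest of your architecture is the paper's, moved to a different representation. You use $\RR^2\oplus\RR^2$ with the unit-invariant height $\|h_1w\|\,\|h_2\sigma(w)\|$, whereas the paper uses $\wedge^2\lieg$ and $\liev=\liev_1\wedge\liev_2$. Since $\|g_i.E_{21}\|\asymp\|g_ie_2\|^2$, your $\alpha(x)$ is comparable to $\alpha_{\mathrm{paper}}(x)^{-1/2}$, up to the Weyl element exchanging $e_1$ and $e_2$. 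The unique-cusp input, goodness via Fubini and the appeal to Kleinbock--Margulis all match.

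\paragraph{The gap.} The dictionary between $\inj$ and $\alpha$ is what fixes the exponents in the lemma. Your $\alpha$ is already a product of two lengths, the analogue of $\lambda_1^2$ for $\SL_2(\ZZ)$. The correct comparison is therefore $\inj(x)\asymp\alpha(x)$, which is \Cref{lem:inj-rel-height} rewritten; for example, at $(a_\tau,a_\tau)\Gamma$ both quantities are $\asymp e^{-\tau}$. You instead state $\inj\gg\alpha^2$, hence use the threshold $\eps^{1/2}$, and then the last step fails. Kleinbock--Margulis with $(C,1/2)$-good functions and threshold $\eps^{1/2}$ yields only $C\eps^{1/4}\Leb(I)$. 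A better goodness exponent does not help, because a product $f(s_1)g(s_2)$ of $(C,1)$-good functions is in general only $(C,\beta)$-good for $\beta<1$. Near the zero $s_{0,i}$ of $s_iw'_{i,1}+w'_{i,2}$ your factors behave like $\max(t_i^{-1},\lvert s_i-s_{0,i}\rvert)$, and $\Leb(\{\bs\in[0,1]^2 : s_1s_2<\delta\})=\delta(1+\log(1/\delta))$. So your claimed bound $C_0\eps^{1/2}\Leb(I)$ on the set where the minimal product is below $\eps^{1/2}$ is unavailable, and fails in general by a logarithm. Your non-contraction step also uses $\alpha(y)\gg\inj(y)$. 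That is the opposite inequality and does not follow from $\inj\gg\alpha^2$.

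\paragraph{What is missing.} You need a proof of the two-sided comparison $\inj\asymp\alpha$. This, not non-contraction, is where the arithmetic enters.
\begin{itemize}
\item For $\inj\ll\alpha$, the paper combines reduction theory with rescaling by a unit $\theta\in\mathcal{O}_K^\times$ as in \eqref{eq:comp-norm}. This balances the two factors and gives a nontrivial unipotent element of $g\Gamma g^{-1}$ at distance $\ll\alpha$ from the identity.
\item For $\inj\gg\alpha$, it writes a short nilpotent element as $g\gamma\xi.((a+b\sqrt d)E_{12},(a-b\sqrt d)E_{12})$ and uses $\lvert a^2-db^2\rvert\ge N^{-2}$.
\end{itemize}
Once $\inj\asymp\alpha$ is established, the bad set lies in $\{\bs : \alpha(a_{-\log\bt}v_\bs y)<C\eps\}$, and $(C,1/2)$-goodness gives $\ll\eps^{1/2}$.

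\paragraph{Non-contraction.} This is then elementary, not the main obstacle. For any interval $I_i\subseteq[0,1]$ of length at least $\eta$ one has $\sup_{s\in I_i}\lvert sw_1+w_2\rvert\ge\tfrac{\eta}{4}\max(\lvert w_1\rvert,\lvert w_2\rvert)$. Your third term $t_i^{1/2}\lvert w'_{i,2}\rvert$ is missing this factor $\eta$. Hence $\sup_{\bs\in I}\prod_i\|a_{-\log t_i}v_{s_i}w'_i\|\gg\eta^2(t_1t_2)^{1/2}\alpha(y)\gg\eta^2(t_1t_2)^{1/2}\inj(y)$. This exceeds a fixed $\rho$ exactly under the hypothesis $\log t_1+\log t_2\ge 2\lvert\log(\eta^2\inj(y))\rvert+C_0$.
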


    Since the statement in \Cref{lem:quan-non-div} is insensitive to passing from $\Gamma$ to the commensurable lattice $\mathbf{G}(\ZZ)$, increasing the absolute constant $C_0$ otherwise, we may (and will) for ease of exposition assume that $\Gamma = \mathbf{G}(\ZZ) \simeq \SL_2(O_K)$. 
    Let $\mathfrak{g} = \Lie(G) = \sl_2(\RR)\oplus\sl_2(\RR)$ be equipped with the $\QQ$-structure 
    \[ \mathfrak{g}_\QQ = \sl_2(K)\subset \mathfrak{g}. \]
    We denote by $\norm{\cdot}$ the maximum norm on $\Mat[2](\RR)\times\Mat[2](\RR)$ with respect to the standard basis. Let $p_1,p_2$ denote the projections to the first and second component of $\Mat[2](\RR)\times\Mat[2](\RR)$, respectively. Notice that $\mathcal{O}_K^\times\lieg_\ZZ = \lieg_\ZZ$. An elementary observation (see \eg \cite[Lemma 8.6]{KT07}) gives the existence of some $c\in\RR$ so that for every $\liew=(\liew_1,\liew_2)\in\lieg$ with $\liew_1\neq 0$ or $\liew_2\neq 0$ there exists $\theta\in \mathcal{O}_K^\times$ such that for $i=1,2$
    \begin{align}\label{eq:comp-norm}
        c^{-1}(\norm{\liew_1}\norm{\liew_2})^{1/2} \leq \norm{p_i(\theta\liew)} \leq c(\norm{\liew_1}\norm{\liew_2})^{1/2}.
    \end{align}
    Moreover, we fix a basis $\{\liev_1,\liev_2\}$ of $\Lie(V)$ consisting of primitive integral vectors as follows. Write $K = \QQ(\sqrt{d})$ and set $\liev_1 = (E_{21},E_{21})$ and $\liev_2 = (\sqrt{d}E_{21},-\sqrt{d}E_{21})$ where $E_{21} = \smat{0}{0}{1}{0}$. We further define
    \[ \liev \defeq \liev_1 \wedge \liev_2 \in \wedge^2\lieg. \]
    Notice that $\liev\in \wedge^2\lieg_\ZZ$, so that for any $g\in\mathbf{G}(\QQ)$ the set $\Gamma g.\liev$ is contained in the subset rational vectors in $\wedge^2\lieg_\QQ$ whose denominators are bounded in terms of $g$. In particular, $\Gamma g.\liev$ is a discrete and closed subset of $\wedge^2\lieg_\QQ$. For any $g=(g_1,g_2)\in G$ we then have
    \begin{align}\label{eq:act-on-liev}
        g.\liev = (g.\liev_1)\wedge(g.\liev_2) =-2\sqrt{d}(g_1.E_{21},0)\wedge(0,g_2.E_{21}).
    \end{align}
    In particular, the induced maximum norm $\norm{\cdot}$ on $\wedge^2\lieg$ with respect to the standard basis then gives 
    \begin{align}\label{eq:norm-act-on-liev}
        \norm{g.\liev} = 2\norm{p_1(g.\liev_1)}\norm{p_2(g.\liev_2)},
    \end{align}
    where $p_i$ denotes the projection to the $i$-th component in $\Mat[2](\RR)\times\Mat[2](\RR)$. 
    
    Recall that $\Xi\subset\mathbf{G}(\QQ)$ is the finite set of representatives for the $\Gamma$-action on $\tlquot{\mathbf{P}(\QQ)}{\mathbf{G}(\QQ)}$, so that
    \begin{align}\label{eq:cusps}
         \mathbf{G}(\QQ) = \mathbf{P}(\QQ)\Xi\Gamma.
    \end{align}
    Equivalently, $\Xi$ parametrizes the finite set of different cusps in $X$. 
    We define $\alpha\colon X\to[2,\infty)$ by
    \[ \alpha(g\Gamma) = \max\{\norm{g\gamma\xi.\liev}^{-1}\colon\xi\in\Xi^{-1},\gamma\in\Gamma\}. \]
    
    \begin{lemma}\label{lem:help}
        Using the notation from above, we have the following.
        \begin{enumerate}
            \item\label{eq:itm-1}(Unique Cusp) There exists $C_1 = C_1(\Gamma)\geq2$ so that the following holds.\\
            Let $g\Gamma\in X$. If $\alpha(g\Gamma)\geq C_1$, then there is $\xi_0\in\Xi^{-1}$ and $\gamma_0\in\Gamma$ so that $\norm{g\gamma_0\xi_0.\liev}^{-1} = \alpha(g\Gamma)$ and 
            \[ \norm{g\gamma\xi.\liev} > 1/C_1, \quad \text{for all } (\xi,\gamma) \text{ so that } \gamma\xi.\liev \neq \gamma_0\xi_0.\liev. \]
            \item\label{eq:itm-2} (Short vectors) There exists $C_2$ so that the following holds.\\
            Let $0<\rho,\eta < 1$, $t_1,t_2>0$, and $g\in G$. Let $I=I_1\times I_2\subseteq[0,1]^2$ be a rectangle of side lengths at least $\eta$. Then
            \[ \Leb\left(\{s\in I\st\norm{(a_{-t_1},a_{-t_2})v_sg.\liev}\leq e^{t_1+t_2}\rho^4\eta^4\norm{g.\liev}\}\right) \leq C_2\rho\Leb(I). \]\label{eq:max-escape}
        \end{enumerate}
    \end{lemma}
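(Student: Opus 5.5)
For \eqref{eq:itm-2} I would reduce to one-variable polynomial estimates using the product formula \eqref{eq:norm-act-on-liev}. For \eqref{eq:itm-1} I would use the Kazhdan--Margulis/Zassenhaus principle: two independent, simultaneously short rational unipotent directions cannot exist, because they would generate a semisimple algebra.

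\emph{Part \eqref{eq:itm-2}.} Write $w_i \defeq g_i.E_{21}$ for $i=1,2$, viewed via the adjoint action. Each $w_i$ is a nonzero nilpotent matrix $\smat{a_i}{b_i}{c_i}{-a_i}$, and by \eqref{eq:norm-act-on-liev}, $\norm{g.\liev} = 2\norm{w_1}\norm{w_2}$. By \eqref{eq:act-on-liev} the element $(a_{-t_1},a_{-t_2})v_{\bs}$ acts factorwise, so
\[ \norm{(a_{-t_1},a_{-t_2})v_{\bs}g.\liev} = 2\prod_{i=1,2}\norm{a_{-t_i}v_{s_i}.w_i}. \]
Since $\Ad(a_{-t})$ multiplies the $E_{21}$-coordinate by $e^{t}$, each factor is at least $e^{t_i}\abs{P_i(s_i)}$. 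Here $P_i(s)$ is the $E_{21}$-coordinate of $v_s.w_i$, namely $P_i(s) = c_i + 2a_is - b_is^2$ up to signs.

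The key input is that $\sup_{I_i}\abs{P_i} \gg \eta^2\norm{w_i}$. This holds because the coefficient map $w_i\mapsto P_i$ is injective. Moreover, on an interval of length $\geq\eta$ inside $[0,1]$ the sup-norm of a quadratic polynomial dominates $\eta^2$ times its coefficient norm, by equivalence of norms after rescaling the interval to unit length. Next I would use that quadratic polynomials are $(C,1/2)$-good in the sense of \cite{KM98}, which gives
\[ \Leb\{s\in I_i \st \abs{P_i(s)}\leq \eps\sup_{I_i}\abs{P_i}\}\leq C\eps^{1/2}\abs{I_i}. \]
If the product $\prod_i e^{t_i}\abs{P_i(s_i)}$ is at most $e^{t_1+t_2}\rho^4\eta^4\norm{w_1}\norm{w_2}$, then for some $i$ we must have $\abs{P_i(s_i)}\leq\rho^2\eta^2\norm{w_i}\ll\rho^2\sup_{I_i}\abs{P_i}$; otherwise both factors exceed their thresholds and so does the product. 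So the bad set lies in the union of two slabs $\{s_i\text{ bad}\}\times I_{3-i}$. By Fubini and the good estimate with $\eps\asymp\rho^2$, the bad set has measure $\leq 2C'\rho\,\Leb(I)$. Absorbing constants gives $C_2$. The only delicate point is keeping the dependence on $\eta$ explicit, which the rescaling handles.

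\emph{Part \eqref{eq:itm-1}.} For each pair $(\xi,\gamma)$, the vector $\gamma\xi.\liev$ is (up to scalar) the Plücker vector of the Lie algebra $\mathfrak w$ of a $\QQ$-horospherical subgroup $\gamma\xi V\xi^{-1}\gamma^{-1}$. Distinct lines $\gamma\xi.\liev$ correspond to distinct such subgroups. The lattice $\mathfrak w\cap\lieg_\ZZ$ has bounded index relative to the $\mathcal{O}_K$-span of an integral nilpotent element $x$, with denominators controlled since $\Xi$ is finite. Using \eqref{eq:comp-norm} to replace $x$ by a unit multiple $\theta x$ with balanced components, I get
\[ \norm{g\gamma\xi.\liev} \asymp \norm{g.(\theta x)}^2, \]
so $\alpha(g\Gamma)$ large means $g.(\theta x)$ is a short vector of $g.\lieg_\ZZ$.

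Now suppose two distinct lines both had $\norm{g\gamma\xi.\liev}\leq 1/C_1$. Then there would be two short integral nilpotent elements $x,x'$ (after applying $g$) spanning distinct nilpotent lines in each factor. By the Zassenhaus/Kazhdan--Margulis lemma, for $C_1$ large all vectors of $g.\lieg_\ZZ$ of norm below a fixed constant span a nilpotent Lie subalgebra; this is where the discreteness of $g\Gamma g^{-1}$ enters. But two distinct nilpotent lines in $\sl_2$ generate all of $\sl_2$, which is a contradiction. Hence at most one line is $1/C_1$-short. If moreover $\alpha\geq C_1$, the maximum is attained, by discreteness of $\Gamma\Xi^{-1}.\liev$, at a unique line $\gamma_0\xi_0.\liev$.

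I expect this Zassenhaus step, done cleanly with uniform constants over the finitely many cusps, to be the main obstacle. In particular one must make sure the short elements obtained from different cusp representatives in $\Xi$ all lie in a common lattice $N^{-1}\lieg_\ZZ$, with $N$ depending only on $\Xi$.
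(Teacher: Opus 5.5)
Your proposal is correct and follows essentially the same route as the paper. Part (2) is the paper's argument: the product formula \eqref{eq:norm-act-on-liev}, a Remez-type bound $\sup_{I_i}\abs{P_i}\gg\eta^2\norm{w_i}$, the $(C,1/2)$-good property of quadratics, and a union of two slabs. Part (1) is the same nilpotent-subalgebra (Zassenhaus-type) contradiction that the paper cites from \cite[Lemma A.1(1)]{LM23}, and your unit-balancing step via \eqref{eq:comp-norm}, which makes the relevant nilpotent elements individually short, is a detail the paper's sketch leaves implicit but which is indeed needed.
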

    \begin{proof}
        \eqref{eq:itm-1}: For a detailed proof we refer to \cite[Lemma A.1(1)]{LM23}. There is $\eps>0$ so that the following holds. Suppose there exist $\gamma,\gamma'\in\Gamma$, and $\xi,\xi'\in\Xi^{-1}$ so that $\gamma\xi.\liev \neq \gamma'\xi'.\liev$ and
        \begin{align}\label{eq:assumption}
            \norm{g\gamma\xi.\liev} < \eps \text{ and } \norm{g\gamma'\xi'.\liev}<\eps.
        \end{align}
        Then one can show that the four elements
        \[ w_1 = g\gamma\xi. \liev_1, \quad w_2 = g\gamma\xi. \liev_2, \quad w_1' = g\gamma'\xi'. \liev_1, \quad w_2' = g\gamma'\xi'. \liev_2, \]
        form a nilpotent subalgebra in $\lieg$ of dimension at least $3$. However, as the dimension of any maximal nilpotent subalgebra in $\lieg$ is $2$, this is a contradiction to \eqref{eq:assumption}.

        \eqref{eq:itm-2}:
        For every $g\in G$ and $\rho>0$, set
        \[ I(g,\rho) = \left\{s\in I\st \norm{p_i^{(21)}(v_sg.\liev_i)} \leq \frac{1}{64}\rho\eta^2\norm{p_i(g.\liev_i)} \text{ for } i=1 \text{ or } i = 2\right\}, \]
        where $p_i^{(21)}$ denotes the projection onto $\RR E_{21}$ inside the $i$-th component. We claim that for some absolute constant $C>0$
        \begin{align}\label{eq:size-of-exc-set}
            \Leb(I(g,\rho)) \leq 4C\rho^{1/2}\Leb(I).
        \end{align}
        Indeed, since $g.\liev_1 = (g_1.E_{21},g_2.E_{21})$ and $g.\liev_2 = (\sqrt{d}g_1.E_{21},-\sqrt{d}g_2.E_{21})$ it holds that
        \begin{align*}
            I(g,\rho) \subseteq &\left\{s_1\in I_1\st \norm{p_1^{(21)}(v_{s_1}.E_{21})} \leq \frac{1}{64}\rho\eta^2\norm{g_1.E_{21}}\right\}\times I_2\\
            &\cup I_1\times\left\{s_2\in I_2\st \norm{p_2^{(21)}(v_{s_2}g_2.E_{21})} \leq \frac{1}{64}\rho\eta^2\norm{g_2.E_{21}}\right\}.
        \end{align*}
        A direct computation gives
        \[ p^{(21)}_i(v_\bs.\lie w) = -s_i^2 w_i^{(12)} +2s_i w_i^{(11)}+ w_i^{(21)} \]
        for any $\liew =(\liew_1,\liew_2)\in\lieg$ with $\liew_i = \smat{w_i^{(11)}}{w_i^{(12)}}{w_i^{(21)}}{w_i^{(22)}}\in\sl_2(\RR)$ and $v_\bs\in V$. Since the side lengths of the rectangle $I=I_1\times I_2\subseteq[0,1]^2$ are by assumption at least of size $\eta < 1$, the Remez inequality together with bounds on the Chebyshev polynomials give
        \[ \sup_{s\in I_i}\abs{p_i^{(21)}(v_s.\lie w)} \geq \tfrac{1}{64}\abs{I_i}^2\sup_{s\in [0,1]^2} \abs{p_i^{(21)}(v_s.\lie w)}  \geq \tfrac{1}{64}\eta^2\norm{\lie w_i}. \]
        By \cite[\S 3]{KM98}, the function $p^{(21)}_i(v_{s_i} g_i.E_{21})$ is $(4\sqrt{3},1/2)$-good and so there exists $C>0$ such that
        \[ \abs{\left\{s_i\in I_i\st \norm{p_i^{(21)}(v_{s_i}g_i.E_{21})} \leq \frac{1}{64}\rho\eta^2\norm{g_i.E_{21}}\right\}}[\bigg] \leq 2C\rho^{1/2}\abs{I_i}. \]
        Hence, we get
        \[ \Leb(I(g,\rho)) \leq  2C\rho^{1/2}\abs{I_1}\cdot\abs{I_2} + \abs{I_1}\cdot2C\rho^{1/2}\abs{I_2} \leq 4C\rho^{1/2}\Leb(I). \]

        Let now $s\in I\setminus I(g,64\rho^2)$, then
        \[ \norm{p_i^{(21)}(v_sg.\liev_i)} > \rho^2\eta^2\norm{p_i(g.\liev_i)} \text{ for } i = 1,2. \]
        Since $a_{-\bt}.\lie w = (e^{t_1}\lie w_1, e^{t_2}\lie w_2)$ for all $\bt= (t_1,t_2)$ and $\lie w=(\lie w_1,\lie w_2) \in\operatorname{span}\left\{(E_{21},0), (0,E_{21})\right\}$ we obtain, using \eqref{eq:norm-act-on-liev}, that
        \begin{align*}
             \norm{a_{-\bt}v_\bs g.\liev} &\geq \norm{(p_1^{(21)}(a_{-\bt}v_{\bs} g.\liev_1),0)\wedge(0,p_2^{(21)}(a_{-\bt}v_\bs g.\liev_2))}\\
             &=e^{t_1+t_2}\norm{(p_1^{(21)}(v_{\bs} g.\liev_1),0)\wedge(0,p_2^{(21)}(v_\bs g.\liev_2))}\\
             &\geq 2e^{t_1+t_2}\norm{p_1^{(21)}(v_{\bs}g.\liev_1)}\cdot\norm{p_2^{(21)}(v_{\bs}g.\liev_2)}\\
             &> 2e^{t_1+t_2}\rho^4\eta^4\norm{p_1(g.\liev_1)}\cdot\norm{p_2(g.\liev_2)} = e^{t_1+t_2}\rho^4\eta^4\norm{g.\liev}.
        \end{align*}
        If we set $C_2\defeq 32C$, we obtain
        \[ \Leb\left(\{s\in I\st\norm{(a_{-t_1},a_{-t_2})v_sg.\liev}\leq e^{t_1+t_2}\rho^4\eta^4\norm{g.\liev}\}\right) \leq C_2\rho\Leb(I), \]
        since $\Leb(I(g,64\rho^2)) \leq 32C\rho\Leb(I)$ by \eqref{eq:size-of-exc-set}.
    \end{proof}

    The following \Cref{lem:inj-rel-height} relates the injectivity radius at $x\in X$ to the height function $\alpha$ at $x$.
    
    \begin{lemma}[{\cite[Lemma A.2]{LM23}}]\label{lem:inj-rel-height}
        There exists $C_3>0$ so that
        \[ C_3^{-1}\alpha(x)^{-1} \leq \inj(x)^2 \leq C_3\alpha(x)^{-1} \]
        holds for all $x\in X$.
    \end{lemma}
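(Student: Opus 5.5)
The plan is to compare both quantities with the length of the shortest nonzero vector of the lattice $\mathrm{Ad}(g)\lieg_\ZZ$, or more precisely with the shortest vector in a rational nilpotent direction, using the reduction theory encoded in \eqref{eq:cusps}. We may assume $\Gamma=\mathbf{G}(\ZZ)$, since both sides change only by bounded factors under passing to a commensurable lattice. By Mahler's criterion and compactness, it suffices to prove the two-sided bound on the complement of a fixed compact set $X_{\eps_0}$. On $X_{\eps_0}$ itself, $\inj$ is bounded above and below, and $\alpha$ is bounded above (it is continuous and finite, because $\Gamma\Xi^{-1}.\liev$ is discrete). So on $X_{\eps_0}$ the inequalities hold with some constant, and the real content lies in the cusp.

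In the cusp I would use a Siegel-set description. Every point far out in $X$ can be written as $g\Gamma = u a_{\bt}k\,\xi^{-1}\Gamma$, with $u$ in a compact subset of $U$, $k$ in a compact set, $\xi\in\Xi$, and $\bt$ having large sum $t_1+t_2$. The imbalance $t_1-t_2$ is controlled modulo the unit group, via the action of $\mathcal{O}_K^\times$ (cf.\ \eqref{eq:comp-norm}), so up to a bounded error we may take $t_1\approx t_2\approx \tau$.

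With this normal form we compute both sides. By \eqref{eq:norm-act-on-liev}, $a_{\bt}$ contracts $(E_{21},0)$ and $(0,E_{21})$ by $e^{-t_1}$ and $e^{-t_2}$. Hence $\norm{g\xi.\liev}\asymp e^{-(t_1+t_2)}$, and \Cref{lem:help}\eqref{eq:itm-1} shows that this is the maximizer, giving $\alpha(g\Gamma)\asymp e^{t_1+t_2}$.

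On the other side, the injectivity radius is governed by the shortest displacement $g\gamma g^{-1}$ near the identity. In the cusp these displacements come from unipotent elements of $\xi^{-1}\Gamma\xi\cap \mathbf{P}$ conjugated by $a_{\bt}$. Their lattice in $V$ (or in $U$, depending on the convention) has covolume scaled by $e^{-(t_1+t_2)}$, and it is balanced, so its shortest vector has length $\asymp e^{-(t_1+t_2)/2}$. This is the reason the square appears: $\inj(x)\asymp e^{-(t_1+t_2)/2}$, so $\inj(x)^2\asymp\alpha(x)^{-1}$.

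The upper bound on $\inj$ is easy, since it only needs to exhibit one such short unipotent element. For the lower bound one must show that nothing else is shorter. Any $\gamma$ with $g\gamma g^{-1}$ close to $e$ generates, together with the short unipotents, a discrete nilpotent configuration near the identity. By Zassenhaus/Margulis lemma arguments, such $\gamma$ lie in the same parabolic $\xi^{-1}\mathbf{P}\xi$, so they are controlled by the same computation.

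The main obstacle is this lower bound, together with the uniformity over the unit-group direction $t_1-t_2$. The eventual proof would follow \cite[Lemma A.2]{LM23}, combining \Cref{lem:help}\eqref{eq:itm-1} with \eqref{eq:comp-norm} to rebalance the two factors.
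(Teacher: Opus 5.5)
Modulo one slip in the set-up (see the second paragraph), your argument works, and for the lower bound it takes a genuinely different route from the paper.

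\textbf{Upper bound.} This part is essentially the paper's. In both, one exhibits a single short unipotent in the cusp of the $\alpha$-maximizing vector and uses units to balance the two factors. The paper balances the vector $g\gamma_0\xi_0.\liev_1$ directly via \eqref{eq:comp-norm}, which gives an element of size $\ll\|g\gamma_0\xi_0.\liev\|^{1/2}=\alpha(x)^{-1/2}$ without Siegel coordinates. You balance the coordinate $t_1-t_2$ instead.

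\textbf{Lower bound.} You argue geometrically. Deep in the cusp, the cusp unipotent and any $g\gamma g^{-1}$ near $e$ lie in a Zassenhaus neighbourhood, so they generate a nilpotent group. To conclude that $\gamma$ lies in the same unipotent radical, you still need two facts. First, a nilpotent subgroup of $\SL_2(K)$ containing a nontrivial unipotent lies in the Borel subgroup normalizing it. Second, elements of that parabolic with a nontrivial diagonal unit part stay away from $e$. Once $\gamma$ is in the unipotent radical, the balanced lattice there has shortest vector $\asymp e^{-(t_1+t_2)/2}$.

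The paper argues arithmetically instead. A short element of $g\Gamma g^{-1}$ is unipotent. Its logarithm is $\liew=\gamma\xi.(\theta E_{21},\sigma(\theta)E_{21})$ for \emph{some} cusp $(\gamma,\xi)$ and some nonzero $\theta=a+b\sqrt d$ with $a,b\in\frac1N\ZZ$. The product of the two component sizes of $g.\liew$ is $\asymp\|g\gamma\xi.\liev\|\cdot|a^2-b^2d|$. The field-norm bound $|a^2-b^2d|\geq N^{-2}$ then forces $\|g\gamma\xi.\liev\|\ll\varepsilon^2$, hence $\alpha(x)\gg\varepsilon^{-2}$.

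This needs neither the Zassenhaus lemma, nor the identification of the short element's cusp with the maximizing one, nor any rebalancing, since only the product of the two factors enters. Your route yields more structure: all short loops at a point deep in a cusp lie in one unipotent radical. The paper's route is shorter and uses only the field norm.

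\textbf{The slip.} Your normal form $g\Gamma=ua_{\bt}k\xi^{-1}\Gamma$ has its factors in the wrong order for $X=G/\Gamma$. With a rotation sitting between $a_{\bt}$ and $\xi^{-1}$, $a_{\bt}$ acts on the generic vector $k.\liev$ and expands it, so $\|g\xi.\liev\|\asymp e^{-(t_1+t_2)}$ fails. Also, a compact piece of $U$ on the left does not account for all directions of short vectors in the cusp.

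Use the Siegel set $k\,a_{\bt}\,v\,\xi$, with $k\in\SO_2(\RR)\times\SO_2(\RR)$ on the left and $v$ in a compact subset of $V$, as the paper does. Since $V$ fixes $\liev$, this gives $\|g\xi^{-1}.\liev\|=\|k a_{\bt}.\liev\|\asymp e^{-(t_1+t_2)}$, and the rest of your computation goes through.
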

    \begin{proof}
        We first discuss the upper bound on $\inj(x)^2$. By reduction theory for arithmetic groups (see \eg \cite{PR94}), there exist $T$, $S$ so that
        \[ (\SO_2(\RR)\times\SO_2(\RR))\cdot\{a_{\bt}\st t_1 + t_2 \leq T\}\cdot\{v_\bs\st s_1 + s_2 \leq S\} \cdot \Xi \]
        is a fundamental domain for $\Gamma$ in $G$. Let $x = g\Gamma$, then by \Cref{lem:help}\eqref{eq:itm-1} there exists $\gamma_0\in\Gamma$, and $\xi_0\in\Xi^{-1}$ so that if we write $g\gamma_0\xi_0 = ka_{\bt} v_\bs$ with $t_1+t_2 \leq T$ and $k\in\SO_2(\RR)\times\SO_2(\RR)$, then
        \[ \alpha(x) = \max\{\norm{g\gamma\xi.\liev}^{-1}\st \xi\in\Xi^{-1},\gamma \Gamma\} = \norm{g\gamma_0\xi_0.\liev}^{-1} = \norm{ka_{\bt}v_\bs.\liev}^{-1} \geq  \frac{1}{2}e^{t_1+t_2}\norm{\liev}^{-1}, \] 
        where we used that the operator norm of $k$ is lower bounded by $\frac{1}{2}$.
        
        Let $g\gamma_0\xi_0.\liev = 2\sqrt{d}(\liew_1,0)\wedge(0,\liew_2)$ using \eqref{eq:act-on-liev} and notice that then $g\gamma_0\xi_0.\liev_1 = (\liew_1,\liew_2)$. By \eqref{eq:norm-act-on-liev}, there is $\theta\in\mathcal{O}_K^\times$ so that for $\liew \defeq (\liew_1,\liew_2)\in\lieg$  and $i=1,2$ we have 
        \[ c^{-1}(\norm{\liew_1}\norm{\liew_2})^{1/2} \leq\norm{\theta\liew_i}\leq  c(\norm{\liew_1}\norm{\liew_2})^{1/2}. \]
        However, then
        \[ \norm{g\gamma_0\xi_0.\theta\liev_1} = \norm{(\theta\liew_1,\theta\liew_2)}
        \leq c(\norm{\liew_1}\norm{\liew_2})^{1/2} = \frac{c}{(2\sqrt{d})^{1/2}}\norm{g\gamma_0\xi_0.\liev}^{1/2}\ll e^{-(t_1+t_2)/2}, \] 
        and so there is an element $v_\bs \in g\gamma_0\xi_0(V\cap \Gamma) \xi_0^{-1}\gamma_0^{-1}g^{-1}$ of size $\ll e^{-(t_1+t_2)/2}$.
        
        Notice that $g\Gamma g^{-1}$ and $g\gamma_0\xi_0\Gamma \xi_0^{-1}\gamma_0^{-1}g^{-1}$ are commensurable. Thus, there is $M\in\NN$ (depending only on $\Xi$) so that
        \[ v_{\bs}^M = v_{M\bs} \in g\Gamma g^{-1} \]
        and $v_{M\bs}$ is of size $\ll Me^{-(t_1+t_2)/2}$. For $C>0$ large enough depending only on $\Gamma$, we thus get that $\inj(x)^2 \leq C\alpha(x)^{-1}$ holds for all $x\in X$.
        
        We turn to the proof of the lower bound for $\inj(x)^2$. Let $x=g\Gamma$ and assume that $\inj(x)< \eps$.~Then
        \[ g\Gamma g^{-1}\cap B^G_\eps \neq \{e\}. \]
        If $\eps$ is small enough, $g\Gamma g^{-1}$ consists only of unipotent elements, and so there exists some nilpotent $\liew\in\lieg_\ZZ$ so that
        \[ \norm{g.\liew} \ll \eps \]
        where the implied constant is absolute. Since all minimal $\QQ$-parabolic subgroups of $\mathbf{G}$ are $\mathbf{G}(\QQ)$-conjugate to each other, and since $\mathbf{G}(\QQ) = \mathbf{P}(\QQ)\Xi\Gamma$ (see \eqref{eq:cusps}), there are $\xi\in\Xi^{-1}$ and $\gamma\in\Gamma$ so that $\liew \in \gamma\xi.\Lie(V)$, that is
        \[ \liew = \gamma\xi.\left((a+b\sqrt{d})E_{12}, (a-b\sqrt{d})E_{12}\right) \]
        for some $a,b\in \frac{1}{N}\ZZ$ with $(a,b)\neq(0,0)$, where $N$ only depends on $\Xi$. By the Iwasawa decomposition of $\mathbf{G}$, we can write $g\gamma\xi = ka_{-\bt} v$, with $k\in\SO_2(\RR)\times\SO_2(\RR)$, $a_{-\bt} \in A$ and $v\in V$, so that
        \[ \norm{g.\liew} = \norm{ka_{-\bt} v.\left((a+b\sqrt{d})E_{12}, (a-b\sqrt{d})E_{12}\right)}[{}][\Big] \leq 2 \max\{e^{t_1}\abs{a+b\sqrt{d}},e^{t_2}\abs{a-b\sqrt{d}}\} \ll \eps, \]
        where we used that the operator norm of $k$ is upper bounded by $2$. In particular, this implies that $e^{t_1}\abs{a+b\sqrt{d}}\cdot e^{t_2}\abs{a-b\sqrt{d}} = e^{t_1+t_2}(a^2 - b^2d) \ll \eps^2$.
        However, as $a,b\in\frac{1}{N}\ZZ$ are not both $0$ and $d\in\NN$ is not a square, we have $a^2-b^2d \geq \frac{1}{N^2}$ and so $e^{t_1+t_2}\ll_N \eps^2$ follows. Therefore, we obtain
        \[ \norm{g\gamma\xi.\liev} = 2\norm{p_1(ka_{-\bt}.\liev_1)}\norm{p_2(ka_{-\bt}.\liev_2)} \ll 2\sqrt{d}e^{t_1+t_2} \leq C'\eps^2 \]
        for some absolute constant $C'$ depending on $\Gamma$. Since $\alpha(x)^{-1} \leq \norm{g\gamma\xi.\liev}$, the lower bound follows.
        Setting $C_3 = \max\{C,C'\}$ finishes the proof.
    \end{proof}

    \begin{proof}[Proof of \Cref{lem:quan-non-div}]
        If we choose $C>0$ large enough, then by \Cref{lem:inj-rel-height} we obtain that if $\log t_1+\log t_2\geq 2\abs{\log(\eta^2\inj(y)))} + C$, then $\log t_1+\log t_2 \geq \log(\alpha(y)/\eta^4)$ .

        Let $y = g\Gamma \in X$ and set $\rho \defeq \frac{1}{2}C_2^{-1}$. In view of \Cref{lem:help}\eqref{eq:max-escape} we have
        \[ \sup\left\{\norm{a_{-\log \bt}v_{\bs}g\gamma\xi.\liev}\st \bs\in I\right\} \geq e^{\log t_1 + \log t_2}\rho^4\eta^4\norm{g\gamma\xi.\liev} \geq \rho^4e^{\log t_1 + \log t_2}\eta^4\alpha(y)^{-1}. \]
        for all $\gamma\in\Gamma$ and $\xi\in\Xi^{-1}$. As long as $\log t_1+\log t_2 \geq \log(\alpha(y)/\eta^4)$, the right most term above is clearly $\geq \rho^4$.
        Hence, under these assumptions on $t_1$ and $t_2$ we have the lower bound $\sup\left\{\alpha(a_{-\log \bt}v_{\bs}y)^{-1}\st \bs\in I\right\} \geq~\rho^4$ and so we can apply \cite[Theorem 4.1]{KM98} to obtain
        \begin{align}\label{eq:bound-using-alpha}
            \Leb(\{\bs\in I\st \alpha(a_{-\log \bt}v_{\bs}y)^{-1} < C_3^{-1}\eps^2 \}) \leq C'\eps^{1/2}\Leb(I) 
        \end{align}
        for some $C'>0$.
        \Cref{lem:inj-rel-height} yields
        \[ \{\bs\in I\st \inj(a_{-\log\bt}v_{\bs}y) < \eps\} \subseteq \{\bs\in I\st \alpha(a_{-\log\bt}v_{\bs}y)^{-1} < C_3^{-1}\eps^2\}\]
        and so by setting $C_0 = \max\{C,C'\}$ the desired result follows from \eqref{eq:bound-using-alpha}.
    \end{proof}
    
\subsection{\texorpdfstring{$A$}{A}-recurrence}
    For $x\in X$ with dense $U$-orbit we have the following recurrence property of the $A$-orbit of $x$.
    \begin{lemma}\label{lem:A-recurrence}
        Let $x = g\Gamma\in X$ with dense $U$-orbit. There is a compact set $K\subseteq X$ such that the set of return times
        \[ \{\bt\in\RR_{\geq 1}^2\st a_{-\log\bt}x \in K \} \]
        is unbounded.
    \end{lemma}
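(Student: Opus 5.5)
The plan is to argue by contradiction, using the height function $\alpha$ together with the unique cusp property in \Cref{lem:help}\eqref{eq:itm-1}. Take $K \defeq \{y\in X\st \alpha(y)\leq C_1\}$. By \Cref{lem:inj-rel-height}, $K\subseteq X_\eps$ for some $\eps>0$, and $X_\eps$ is compact by Mahler's criterion. Also $\alpha$ is continuous, since the orbits $\Gamma\xi.\liev$ are discrete, so $K$ is closed and therefore compact. Suppose the set of return times to $K$ is bounded. Then there is $T\geq 1$ such that $\alpha(a_{-\log\bt}x)>C_1$ for every $\bt$ in
\[ \Omega\defeq\RR_{\geq1}^2\setminus[1,T]^2, \]
which is a connected set.

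\emph{Step 1: a single rational vector controls the whole divergence.} For each $\bt\in\Omega$, \Cref{lem:help}\eqref{eq:itm-1} gives a unique vector $w(\bt)\in\bigcup_{\xi\in\Xi^{-1}}\Gamma\xi.\liev$ with
\[ \norm{a_{-\log\bt}g.w(\bt)}^{-1}=\alpha(a_{-\log\bt}x). \]
Every other vector $w$ in this union satisfies $\norm{a_{-\log\bt}g.w}>1/C_1$. The union is discrete and closed, and the norms depend continuously on $\bt$. Together with the uniqueness and the gap between $C_1^{-1}$ and $\alpha^{-1}<C_1^{-1}$, this shows that $\bt\mapsto w(\bt)$ is locally constant on $\Omega$. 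Since $\Omega$ is connected, there is one vector $w=\gamma_0\xi_0.\liev$ such that
\[ \norm{a_{-\log\bt}g.w}<1/C_1 \quad\text{for all }\bt\in\Omega. \]

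\emph{Step 2: weight analysis.} By \eqref{eq:act-on-liev}, $g\gamma_0\xi_0.\liev=-2\sqrt d\,(\liew_1,0)\wedge(0,\liew_2)$ with $\liew_i=h_i.E_{21}$ nonzero in $\sl_2(\RR)$, where $h=g\gamma_0\xi_0$. The element $a_{-\log\bt}$ acts on the $i$-th factor with weights $t_i^{-1},1,t_i$ on $E_{12},H,E_{21}$. So $\norm{a_{-\log\bt}g.w}$ is comparable to $\norm{a_{-\log t_1}.\liew_1}\cdot\norm{a_{-\log t_2}.\liew_2}$. Fix $t_2$ large, let $t_1\to\infty$, and then swap the roles of the two coordinates. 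Boundedness on $\Omega$ forces the $H$- and $E_{21}$-components of each $\liew_i$ to vanish. Hence $\liew_i\in\RR E_{12}$, so $h\Lie(V)h^{-1}=\Lie(U)$ and $hVh^{-1}=U$.

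\emph{Step 3: conclude periodicity.} From Step 2,
\[ U\cap g\Gamma g^{-1}\supseteq h\bigl(V\cap\xi_0^{-1}\Gamma\xi_0\cap\Gamma\bigr)h^{-1}. \]
The group $\xi_0^{-1}\Gamma\xi_0$ is commensurable with $\Gamma$ because $\xi_0$ is rational, and $V$ is a unipotent $\QQ$-subgroup. So $V\cap\xi_0^{-1}\Gamma\xi_0\cap\Gamma$ is a lattice in $V$, and the right-hand side is a lattice in $U$. Hence $Ux$ is periodic, so closed and of zero measure, contradicting density of $Ux$.

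I expect the main technical point to be Step 1, the local constancy of the minimizing vector. This is exactly where \Cref{lem:help}\eqref{eq:itm-1} is needed: without uniqueness, the cusp responsible for divergence could change as $\bt$ varies in the two-parameter cone, and the weight argument in Step 2 would no longer apply to a single vector.
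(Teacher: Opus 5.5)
Your proof is correct up to one small fix, noted at the end, but it takes a different route from the paper.

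\textbf{The paper's route.} It takes $K=\{\alpha\le 2C_1\}$, rewrites $\alpha$ via $\lieu=\lieu_1\wedge\lieu_2$, and argues directly. If $x\notin K$, the dominating vector $g\gamma_0\xi_0.\lieu$ is not an eigenvector, since otherwise $Ux$ would be periodic, so some $E_{21}$-coordinate is nonzero. Flowing along one coordinate of $A$ makes this coordinate grow. By continuity the norm hits exactly $1/(2C_1)$ at some $t_0>1$, and the unique cusp property then forces $\alpha=2C_1$, that is, a return to $K$. Unboundedness comes from repeating this at $a_{-\log\bt}x$, which again has dense $U$-orbit.

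\textbf{Your route and what it buys.} You argue globally by contradiction. The unique cusp property plus connectedness of $\Omega$ yields a single dominating vector on all of $\Omega$. The weight computation then turns non-return into $hVh^{-1}=U$. This uses non-periodicity only once, avoids the intermediate value step, and lets you work with $\liev$ directly instead of passing to $\lieu$. The cost is that it is non-constructive.

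\textbf{A remark on strength.} As written, your argument only shows that $\max(t_1,t_2)$ is unbounded on return times. \Cref{prop:approaching-omega} actually uses returns with $t_1^k,t_2^k\to\infty$, which the paper's iteration from $(T,T)$ provides naturally. You get this at no extra cost by running the same argument on the connected set $[T,\infty)^2$: fix $t_2=T$, let $t_1\to\infty$, then swap the roles.

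\textbf{The fix, in Step 2.} Boundedness only forces the $E_{21}$-components of $\liew_i$ to vanish. The $H$-component contributes a bounded constant factor, so boundedness alone does not kill it. It vanishes because $\liew_i=h_i.E_{21}$ is nilpotent, so $\det\liew_i=0$. A traceless matrix with zero $(2,1)$-entry and diagonal $(b,-b)$ has determinant $-b^2$, so $b=0$.
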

    \begin{proof}
        Recall the notion $\Lie(G) =\lieg$ of \Cref{sec:quan-non-div}. We fix a basis $\{\lieu_1,\lieu_2\}$ of $\Lie(U)$ consisting of primitive integral vectors as follows. Let $E_{12} = \smat{0}{1}{0}{0}$ and define $\lieu_1 = (E_{12},E_{12})$, $\lieu_2 = (\sqrt{d}E_{12},-\sqrt{d}E_{12})$ and $\lieu \defeq \lieu_1\wedge\lieu_2 \in \wedge^2\lieg$.
        Similarly as \eqref{eq:act-on-liev} and \eqref{eq:norm-act-on-liev}, we have
        \begin{align}\label{eq:help}
            g.\lieu = (g.\lieu_1)\wedge(g.\lieu_2) = -2\sqrt{d}(g_1.E_{12},0)\wedge(0,g_2.E_{12}), \quad \norm{g.\lieu} = 2\norm{p_1(g.\lieu_1)}\norm{p_2(g.\lieu_2)}.
        \end{align}
        Recall that $\mathbf P=\Res{K}(\mathbf B)$ where $\mathbf B$ is the subgroup of upper triangular matrices, $\mathbb P$ is the stabilizer of $
        \lieu$. Moreover, $\lieu$ and $\liev$ conjugated by a Weyl element, \ie
        \[ \left(\smat{}{1}{-1}{},\smat{}{1}{-1}{}\right).\liev = \lieu. \]
        Since $\left(\smat{}{1}{-1}{},\smat{}{1}{-1}{}\right) \in \mathbf{G}(\QQ)$ we have
        \[ \{\gamma\xi.\liev \st \gamma \in \Gamma, \xi \in \Xi^{-1} \} = \{\gamma\xi.\lieu \st \gamma \in \Gamma, \xi \in \Xi^{-1} \} \]
        and so
        \[ \alpha(g\Gamma) = \max\{\norm{g\gamma\xi.\liev}^{-1}\colon\xi\in\Xi^{-1},\gamma\in\Gamma\}. = \max\{\norm{g\gamma\xi.\lieu}^{-1}\colon\xi\in\Xi^{-1},\gamma\in\Gamma\}. \]

        Let $K = \{g\Gamma\in X\st \alpha(g\Gamma) \leq 2C_1\}$ where $C_1$ is the constant from \Cref{lem:help}. Using the relation between $\alpha$ and the injectivity radius from \Cref{lem:inj-rel-height} and Mahler's compactness criterion, we deduce that $K$ is a compact set. We claim that if $x\in X$ has a dense $U$-orbit, then $\{\bt\in\RR_{\geq 1}^2\st a_{-\log\bt}x \in K \}$ is unbounded. 
        By \Cref{lem:structure-of-U-orbits} and the decomposition $\mathbf G(\QQ) = \mathbf P(\QQ)\Xi\Gamma$, a point $g\Gamma$ has a periodic $U$-orbit if and only if there are $\gamma\in\Gamma$ and $\xi\in \Xi^{-1}$ such that $\lieu$ is an eigenvector of $g\gamma\xi$. Since
        \[ g\gamma\xi.\lieu = (g\gamma\xi.\lieu_1)\wedge (g\gamma\xi.\lieu_2) =  -2\sqrt{d}((g\gamma\xi)_1.E_{12},0)\wedge(0,(g\gamma\xi)_2.E_{12}), \]
        $\lieu$ is an eigenvector of $g\gamma\xi$ if and only if $p_1^{(21)}(g\gamma\xi .\lieu_1)= 0$ and $p_2^{(21)}(g\gamma\xi.\lieu_2) = 0$, where we recall that $p_i^{(21)}$ denotes the projection to $\RR E_{21}$ in the $i$-th component.

        Assume that $x=g\Gamma\not\in K$. Then by \Cref{lem:help} there is $\xi_0\in\Xi^{-1}$ and $\gamma_0\in\Gamma$ such that
        \[ \alpha(x) = \norm{g\gamma_0\xi_0.\lieu}^{-1} > 3C_1 \]
        and $\norm{\gamma\xi.\lieu} > 1/C_1$ for all $\gamma\in\Gamma$, $\xi\in\Xi^{-1}$ with $\gamma\xi.\lieu \neq \gamma_0\xi_0.\lieu$. Since $\lieu$ is not an eigenvector of $g\gamma_0\xi_0$, we can choose $i\in\{1,2\}$ such that $w^{(21)} \defeq p_i^{(21)}(g\gamma_0\xi_0.\lieu_i) \neq 0$. For ease of notation, we assume that $i=1$. By \eqref{eq:help}, and since $p_1^{(21)}(a_{(-\log t,0)}g\gamma\xi.\lieu_1) = e^{t}w^{(21)}$ we have
        \[ \norm{a_{(-\log t,0)}g\gamma_0\xi_0.\lieu} \geq 2e^{t}w^{(21)}\norm{p_2(g\gamma_0\xi_0.\lieu_2)} > 0  \]
        for all $t\geq 1$. For $t_0 = 1$ we have $\frac{1}{3C_1} > \norm{a_{(-\log t_0,0)}g\gamma_0\xi_0.\lieu} > 2w^{(21)}\norm{p_2(g\gamma_0\xi_0.\lieu_2)}$ and so choosing $t_0>1$ large enough, one can by continuity ensure that 
        \[ \frac{1}{C_1} > \norm{a_{(-\log t_0,0)}g\gamma_0\xi_0.\lieu} = 2e^{t_0}w^{(21)}\norm{p_2(g\gamma_0\xi_0.\lieu_2)}= \frac{1}{2C_1} > \frac{1}{3C_1}. \]
        Invoking \Cref{lem:help} once again, we get
        \[ \alpha(a_{(-\log t_0,0)}x) = \norm{a_{(-\log t_0,0)}g\gamma_0\xi_0.\lieu}^{-1} = 2C_1 < 3C_1 \]
        and so $a_{(-\log t_0,0)}x \in K$.

        To sum up, if $x\in X$ has a dense $U$-orbit and $x\not\in K$, then there is $t_0 > 1$, such that either $a_{(-\log t_0,0)}x \in K$ or $a_{(0, -\log t_0)}x \in K$. Since $t_0>1$ and since for all $\bt\in\RR^2$ the $U$-orbit of $a_{\bt}x\in X$ is dense, it follows that
        \[ \{\bt\in\RR_{\geq 1}^2\st a_{-\log\bt}x \in K\} \]
        is unbounded.
    \end{proof}
    
\subsection{Equidistribution of expanding horospherical orbits}\label{sec:equi-exp-hor}

    We recall that expanding pieces of (unstable) horospherical orbits become equidistributed in $X$, with an effective error rate.
    \begin{lemma}[{\cite[Proposition 2.4.8]{KM96}}]\label{lem:exp-horospheres}
        Let $x\in X$ with dense $U$-orbit. There exists $\kappa>0$ and $\sobdim\in\NN$ so that for a large enough constant $C_x>0$, depending on $x$, the following holds.\\
        For all $C\geq C_x$, $y\in K$ and all $f\in C^\infty_c(X)$ we have
        \begin{align}\label{eq:exp-horo}
            \abs{\frac{1}{4}\int_{[1/2,1]^2}f((a_{\log C},a_{\log C})u_{\br}y)\dd\br - \int_Xf\msr_X}[\bigg] \leq \Sob(f) C^{-\kappa}.
        \end{align}
    \end{lemma}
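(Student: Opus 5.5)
The plan is to derive \eqref{eq:exp-horo} from effective decay of matrix coefficients on $L^2_0(X)$, using Margulis' thickening argument. The dependence on $x$, through $C_x$, only matters because $y$ ranges over the compact set $K$ from \Cref{lem:A-recurrence}. So the goal is an estimate that is uniform over $y\in K$.

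\emph{Step 1 (spectral input).} Since $\Gamma$ is irreducible, the projection of $\Gamma$ to each factor is dense. Hence neither $\SL_2(\RR)$ factor has nonzero invariant vectors in $L^2_0(X)$. Moreover, $\Gamma$ is arithmetic, so each factor acts on $L^2_0(X)$ with a uniform spectral gap (Kleinbock--Margulis; quantitatively Kelmer--Sarnak, or Selberg/Burger--Sarnak type bounds). Combined with the standard Howe--Moore/$K$-finite vector argument, this gives $\kappa_0>0$ and $\sobdim$ such that for all $\bt=(t_1,t_2)$ with $t_1,t_2\ge 0$ and all $f_1,f_2\in C^\infty_c(X)$,
\[ \abs{\langle a_{\bt}f_1,f_2\rangle - \textstyle\int f_1\int f_2} \ll \Sob(f_1)\Sob(f_2)\, e^{-\kappa_0\min(t_1,t_2)}. \]
The case needed here is $\bt=(\log C,\log C)$, where the error is $\ll C^{-\kappa_0}\Sob(f_1)\Sob(f_2)$.

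\emph{Step 2 (thickening).} By \Cref{lem:inj-rel-height} and compactness of $K$ there is $r_K>0$ with $\inj(y)\ge r_K$ for all $y\in K$. Fix $0<\eps<r_K/10$ and a smooth bump $\rho_\eps$ on $B^{AV}_\eps$ with $\int\rho_\eps=1$ and $\Sob(\rho_\eps)\ll\eps^{-N}$ for some $N$ depending on $\sobdim$. Let $\chi_\eps$ be a smooth cutoff of $[1/2,1]^2$ in $U$ at scale $\eps$. Define
\[ \psi(u_{\br}b y)=\tfrac14\chi_\eps(\br)\rho_\eps(b) \]
on the injective image of the box $\{u_\br b\}\cdot y$. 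Then $\int\psi\,\mathrm d\msr_X=1+O(\eps)$ and $\Sob(\psi)\ll\eps^{-N'}$. Write $a=(a_{\log C},a_{\log C})$. Since $a u_\br b=(a b a^{-1})\,a u_\br$ and conjugation by $a$ does not expand $AV$ (it fixes $A$ and contracts $V$), we get $d(aba^{-1},e)\ll\eps$. Therefore $|f(au_\br by)-f(au_\br y)|\ll\eps\,\Sob(f)$. The product structure $UAV\cdot y$ then gives
\[ \tfrac14\int_{[1/2,1]^2}f(au_\br y)\,\mathrm d\br=\langle f\circ a,\psi\rangle+O(\eps\,\Sob(f)). \]
Here the boundary error from replacing the indicator of $[1/2,1]^2$ by $\chi_\eps$ is also $O(\eps\|f\|_\infty)$.

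\emph{Step 3 (optimize).} Applying Step 1 gives a total error $\ll\Sob(f)\bigl(\eps+\eps^{-N'}C^{-\kappa_0}\bigr)$. Choosing $\eps=C^{-\kappa_0/(N'+1)}$ yields \eqref{eq:exp-horo} with $\kappa=\kappa_0/(2(N'+1))$, once $C\ge C_x$ is large enough to absorb the implied constants and to ensure $\eps<r_K/10$.

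The main obstacle is Step 1. One needs a uniform spectral gap for each factor on $L^2_0(X)$, which uses irreducibility of $\Gamma$ essentially, and it must be converted into Sobolev-norm decay for the product diagonal flow. The thickening itself is routine.
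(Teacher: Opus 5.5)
Your strategy matches \cite[Proposition 2.4.8]{KM96}, which is all the paper offers for this lemma; it cites that result and points to exponential mixing of $a_{(t,t)}$. The spectral gap gives effective mixing in Sobolev norms, thickening turns the horospherical average into a matrix coefficient, and the thickening scale is optimized. Steps 1 and 3 are fine, but Step 2 fails as written. The identity $au_{\br}b=(aba^{-1})au_{\br}$ is false, since the right-hand side equals $abu_{\br}$. With your parametrization $g=u_{\br}b$, the displacement between $au_{\br}by$ and $au_{\br}y$ is $a(u_{\br}bu_{-\br})a^{-1}$. In each factor $u_ra_tu_{-r}=a_tu_{(e^{-t}-1)r}$ with $r\asymp 1$, so $u_{\br}bu_{-\br}$ has a $U$-component of size $\asymp\eps$, and conjugation by $a$ stretches it to size $\asymp C\eps$. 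With your choice $\eps=C^{-\kappa_0/(N'+1)}$ this is unbounded, so the pointwise bound $|f(au_{\br}by)-f(au_{\br}y)|\ll\eps\Sob(f)$ is false. The repair is to thicken on the other side, using the box $\{bu_{\br}\}$ with $b\in B^{AV}_\eps$ on the left. The map $(b,u)\mapsto bu$ is a diffeomorphism of $AV\times U$ onto an open subset of $G$, carrying Haar measure to $d_\ell b\,d\br$. Then $abu_{\br}=(aba^{-1})au_{\br}$ really holds with $aba^{-1}\in B^{AV}_\eps$, and your estimate goes through.

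Second, $\inj(y)\geq r_K$ only makes $g\mapsto gy$ injective on $B_{r_K}$, whereas your box has $U$-extent $1/2$. For $y\in K$, the stabilizer of $y$ may contain non-trivial elements of $(B^{AV}_\eps)^{-1}\{u_{\bs}:\bs\in[-\frac12,\frac12]^2\}B^{AV}_\eps$, so your formula for $\psi$ need not define a function. There are two standard fixes. You can cover $[1/2,1]^2$ by $O(r_K^{-2})$ squares of side $<r_K/2$ with a smooth partition of unity and run the argument on each. Alternatively, define $\psi$ as the density of the pushforward of $\tilde\psi\,d\msr_G$ under $g\mapsto gy$, that is, $\psi(z)=\sum_{gy=z}\tilde\psi(g)$. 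Then $\int_X f(az)\psi(z)\,d\msr_X(z)=\int_G\tilde\psi(g)f(agy)\,d\msr_G(g)$ holds without any injectivity. Moreover $\Sob(\psi)\ll_K\Sob(\tilde\psi)$, because the number of preimages in the support is bounded uniformly for $y\in K$. Finally, the normalizing factor should be $4=\Leb([1/2,1]^2)^{-1}$ rather than $\frac14$; this typo is already in the statement. With $\psi=\frac14\chi_\eps\rho_\eps$ you do not get $\int\psi=1+O(\eps)$; also normalize $\rho_\eps$ against the pullback of $\msr_G$.
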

    The constant $\kappa$ in \Cref{lem:exp-horospheres} is determined by the spectral gap of $X = \tquot{G}{\Gamma}$. 
    The result is classical and the method of proof goes back to the thesis of Margulis. The idea is to exploit exponential mixing of the flow $\{\tilde{a}_t \defeq a_{(t,t)}\st t\geq1\}$ on $X$, see \cite[Proposition 2.4.8]{KM96}. 
    \section{Shadowing periodic \texorpdfstring{$U$}{U}-orbits}
    The goal of this section is to prove that points $x\in X$ with dense $U$-orbits approach a given closed $U$-orbit $U\omega$ \highlight{effectively}. More precisely, \Cref{prop:approaching-omega} states that at an infinite number of times $\bR$ the $U$-orbit of $x$, is sufficiently close to the starting point $\omega$ such that the distance is roughly of the order $\bR^{-1/2+\delta/10}$. This implies that the $U$-orbit \enquote{shadows} the periodic $U$-orbit $\omega$ for times up to size $\bR^{1/2-\delta/5}$ (see \Cref{cor:staying-close-to-closed-orbits}).
    
\subsection{Approaching periodic \texorpdfstring{$U$}{U}-orbits}

    Let $\omega \in X$ with periodic $U$-orbit $U\omega$, and $x\in X$ with dense $U$-orbit be given.
    The following \namecref{prop:approaching-omega} asserts that the $U$-orbit at $x$ approaches the closed orbit $U\omega$ infinitely often (at times $(\br^k)_{k\in\NN}$) and effectively, that is, the distance between the two orbits is controlled by $\bR^k$ (where $\br^k\sim\bR^k)$. Due to technical reasons which will become apparent later (see \Cref{lem:weyl-sum-bound} and \Cref{lem:weyls-inequality}) we need to obtain an upper and a lower bound on the distance to the periodic $U$-orbit. 

    \begin{proposition}\label{prop:approaching-omega}
        Let $x\in X$ with dense $U$-orbit, $\omega\in X$ with periodic $U$-orbit and $1\geq \delta > 0$. There exists $D_x>0$ and an infinite sequence $(\bR^k)_{k\in\NN} = (R^k_1,R^k_2)_{k\in\NN}$, with $R^k_i\to\infty$ for $i=1,2$, such that there is 
        \[ \br^k\in \rect[\tfrac{D_x}{4}R^k_1]{D_xR^k_1}\times\rect[\tfrac{4D_x}{4}R^k_2]{4D_xR^k_2} \]
        satisfying
         \[ u_{\br^k}x \in \left\{a_{\bt}\st \bt\in\rect{2\log(1 + (\bR^k)^{-\frac{\delta}{12(\sobdim+3)}})}\right\}\cdot \left\{v_{\bs}\st \bs\in\rect[\tfrac{1}{2}(\bR^k)^{-1/2+\delta/10}]{(\bR^k)^{-1/2+\delta/10}}\right\}\cdot\omega. \]
    \end{proposition}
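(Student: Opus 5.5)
The plan combines the three ingredients of the previous section (recurrence, non-divergence, effective equidistribution) and finally counts along the full horospherical direction $V$ through $\omega$. Write $\bR=\bR^k$ throughout.

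\textbf{Step 1: a good base point.} By \Cref{lem:A-recurrence} there are unboundedly many $\bt\in\RR^2_{\geq1}$ with $y\defeq a_{-\log\bt}x\in K$. Since $K$ is compact, $\inj(y)$ is bounded below uniformly, so $C_x$ in \Cref{lem:exp-horospheres} can be chosen uniformly over such $y$. Conjugation gives
\[ u_{\br}x = a_{\log\bt}\,u_{\br/\bt}\,y \]
(componentwise division). So choosing $\br\in \rect[\tfrac{D_x}{4}\bR]{D_x\bR}$ amounts to choosing $\br/\bt$ in a box of size $\asymp \bR/\bt$ and then flowing by $a_{\log \bt}$. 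The plan is to split $a_{\log\bt}=a_{\log\bt'}\,(a_{\log C},a_{\log C})$. The diagonal part $(a_{\log C},a_{\log C})$ equidistributes the unit box $u_{[1/2,1]^2}y$ via \Cref{lem:exp-horospheres}, with $C$ a power of $R_1R_2$. The remaining anisotropic part $a_{\log\bt'}$ is what produces the scale $\bR$.

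\textbf{Step 2: choosing the test function.} Take $f$ a smooth bump around the target set
\[ \Omega = a_{\rect{\epsilon}}\cdot v_{\rect[\frac12\bR^{-1/2+\delta/10}]{\bR^{-1/2+\delta/10}}}\cdot u_{B^U_{\beta}}\,\omega, \qquad \epsilon = 2\log(1+\bR^{-\delta/(12(\sobdim+3))}). \]
Here the thickening in the $U$-direction by a fixed $\beta$ is harmless: afterwards we simply move along $U$, which shifts $\br$ by $O(\beta\bt)$, negligible after rescaling. Then $\msr_X(\Omega)\gg \epsilon^2\,(R_1R_2)^{-1+\delta/5}$, while $\Sob(f)\ll \epsilon^{-\sobdim}(R_1R_2)^{\sobdim(1/2-\delta/10)}$ or so. The exponent $\frac{\delta}{12(\sobdim+3)}$ in the $A$-thickness is chosen precisely so that $\Sob(f)C^{-\kappa}<\tfrac12\msr_X(\Omega)$ once $C=(R_1R_2)^{M}$ with $M$ large in terms of $\kappa,\sobdim$. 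This is compatible with the unboundedness of the return times $\bt$: pick $\bt$ first from \Cref{lem:A-recurrence}, then define $\bR$ so that $\bt\asymp C\bR$ in a suitable sense. Hence $\int f((a_{\log C},a_{\log C})u_\br y)\,d\br>0$, and some $\br_0\in[1/2,1]^2$ lands in $\Omega$.

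\textbf{Step 3: the main obstacle.} The main difficulty is reconciling the anisotropic scaling with the fact that \Cref{lem:exp-horospheres} only treats the diagonal flow. It also has to give the correct scale $\br\in\rect[\frac{D_x}{4}\bR]{D_x\bR}$ rather than merely $\br\asymp \bt$. I would try to absorb the difference by choosing the target set in Step 2 to be $a_{-\log\bt'}\Omega$ instead of $\Omega$. Conjugation by $a_{-\log \bt'}$ rescales the $V$-box by $\bt'$, and one must keep $\bt'$ bounded by a fixed power of $\bR$ so that the Sobolev losses remain controlled.

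If this fails to pin the $V$-displacement into the window $[\tfrac12\bR^{-1/2+\delta/10},\bR^{-1/2+\delta/10}]$, I would use \Cref{lem:quan-non-div} instead. For $\bs$ in that rectangle, the points $a_{-\log\bt}v_\bs(\cdot)$ stay in $X_\eps$ except on a set of proportion $C_0\eps^{1/2}$. This gives the required $V$-transversality, so that the unique $V$-coordinate at the hit is forced into the prescribed window rather than being smaller. Finally, $D_x$ absorbs the constants from $C_x$ and $K$, and letting the recurrence times $\bt\to\infty$ yields the sequence $\bR^k$ with $R^k_i\to\infty$.
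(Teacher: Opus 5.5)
There is a genuine gap, and the diagonal-versus-anisotropic issue is not the real one: it can be absorbed into the split $\bt=C\bt'$. The problem is the design of your target. In Step~2 the window $[\tfrac12\bR^{-1/2+\delta/10},\bR^{-1/2+\delta/10}]$ is built into the \emph{thickness} of $\Omega$. So $\msr_X(\Omega)$ is roughly $(R_1R_2)^{-1/2}$ and $\Sob(f)$ is a power of $\bR$ with exponent of order $\sobdim$, which is why you need $C=(R_1R_2)^M$.

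But $(a_{\log C},a_{\log C})u_{\br_0}y=a_{-\log\bt'}u_{\bt\br_0}x$ with $\bt=C\bt'$. So a hit of $\Omega$ controls $a_{-\log\bt'}u_{\br}x$, not $u_\br x$, and it does so at a time $\br\asymp C\bt'$ that must be $\asymp D_x\bR$. This ties $C$ to $\bt'$.

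Pulling the target back by $a_{-\log\bt'}$ (your Step~3) does not help. Conjugation multiplies the $V$-sides by $t_i'$ and the $U$-sides by $1/t_i'$, so their product $\approx\beta R_i^{-1/2+\delta/10}$ is unchanged. Hence for every choice of $\bt'$ the target stays polynomially thin in some direction, and $\Sob(f)/\msr_X$ stays far larger than the available $C^{\kappa}$.

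Finally, \Cref{lem:quan-non-div} gives no ``$V$-transversality'' and no ``unique $V$-coordinate''. It only bounds how much of an expanded $V$-box $a_{-\log\bt}v_{I}y$ leaves the thick part.

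The missing idea is to encode the window in the \emph{position of the centre} of the target, not its thickness; this is exactly what non-divergence is for in the paper.

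\begin{itemize}
\item Given a return time $\bt^k$, set $\bR=D_x^{-1}\bt^k$, $\bt'=\bR^{1-\delta/12}$ and $C=D_x\bR^{\delta/12}$, so that $C\bt'=\bt^k$.
\item The point $a_{-\log\bt'}\omega$ is deep in the cusp, since its periodic $U$-orbit has covolume $L_\omega/(t'_1t'_2)$.
\item \Cref{lem:quan-non-div}, applied to $y=\omega$, the window $I$ and the time $\bt'$, yields a single $\bs_0\in I$ with $z_0=a_{-\log\bt'}v_{\bs_0}\omega\in X_{1/2}$.
\item The target is then only the box $B_\beta z_0$ with $\beta\asymp\bR^{-\delta/(12(\sobdim+3))}$. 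Because $z_0$ lies in a fixed compact set, $\int_X\chi_{z_0}\,d\msr_X\asymp\beta^6$ and $\Sob(\chi_{z_0})\ll\beta^{3-\sobdim}$ are only tiny powers of $\bR$.
\item The modest expansion by $C$ in \Cref{lem:exp-horospheres} (with $D_x$ large) therefore suffices. It gives $\br\in[1/2,1]^2$ and $\br_1,\bt_1,\bs_1\in[0,\beta]^2$ with $(a_{\log C},a_{\log C})u_\br y=u_{\br_1}a_{\bt_1}v_{\bs_1}z_0$.
\end{itemize}

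Applying $a_{\log\bt'}$ then yields
\[ u_{\bt^k\br-\bt'\br_1}x=a_{\bt_1}v_{\bs_0+\bs_1/\bt'}\,\omega . \]
The $U$-error is absorbed into $\br^k=D_x\bR\br-\bt'\br_1\in[\tfrac{D_x}{4}\bR,D_x\bR]$. The $V$-coordinate is $\bs_0+O(\beta\bR^{-1+\delta/12})$, which lies in (a slight enlargement of) the window simply because $\bs_0$ does.
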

    \begin{proof}
        Fix $\sigma_0 > 1$ such that $4\log\sigma_0 \geq C_0 - 2\log(\inj(\omega))$, where $C_0$ is as in \Cref{lem:quan-non-div}. By \Cref{lem:A-recurrence}, we may pick an unbounded sequence $(\bt^k)_{k\in\NN}$, such that $t_1^k,t_2^k\to\infty$ as $k\to\infty$, satisfying $a_{-\log\bt^k}x\in K$. Let $\kappa$, $\sobdim$ and $C_x$ be as in \Cref{lem:exp-horospheres}. Let $D_x>0$ be a sufficiently large parameter specified below. Define $\bR^k = D_x^{-1}\bt^k$ for all $k\in\NN$. As $\bt^k$ is unbounded, $\bR^k$ is also unbounded. Hence, we may choose $k_0\in\NN$ large enough and remove the first $k_0$ elements of $(\bR^k)_{k\in\NN}$, to ensure that $(R^k_1R^k_2)^{1/2-\delta/24} > \sigma_0^2\inj(\omega)$ and $\frac{1}{2}(\bR^k)^{-1/2 +\delta/10} > (R^k_1R^k_2)^{-1/2+\delta/24}\sigma_0$  holds for all $k\in\NN$. The first condition guarantees that
        \begin{align*}
            2\abs{\log\left((R^k_1R^k_2)^{-1/2+\delta/24}\sigma_0^2 \inj(\omega)\right)}[\bigg] + C_0 &= \log((R^k_1R^k_2)^{1-\delta/12}) - 4\log \sigma_0 - 2\log(\inj(\omega)) + C_0\\
            &\leq \log((R^k_1)^{1-\delta/12}) + \log((R^k_2)^{1-\delta/12})
        \end{align*}
        holds for all $k\in\NN$, where the last inequality follows by the choice of $\sigma_0$. Thus, this allows us to apply \Cref{lem:quan-non-div} for $y=\omega$ with 
        \[ \eta  = (R^k_1R^k_2)^{-1/2+\delta/24}\sigma_0,\quad \eps = \tfrac{1}{2}, \quad I = \rect[\tfrac{1}{2}{(\bR^k)^{-1/2+\delta/10}}]{(\bR^k)^{-1/2+\delta/10}}, \quad \bt = (\bR^k)^{1-\delta/12}, \]
        since the side lengths of $I$ are at least $\eta$ by the second condition. It follows that there is $\bs_0\in\rect[\tfrac{1}{2}{(\bR^k)^{-1/2+\delta/10}}]{(\bR^k)^{-1/2+\delta/10}}$ such that
        \[ z_0 \defeq a_{-\log(\bR^k)^{1-\delta/12}} v_{\bs_0}\omega \in X_{1/2}. \]

        Recall that for $\beta>0$ we have $B_\beta = B_\beta^G = \{u_ra_tv_s\st r,t,s\in[0,\beta]\}$.
        Let $\chi_{z_0}$ be an appropriately chosen smooth function satisfying (see \cite[Proposition 2.4.7(b)]{KM96})
        \[ \mathbbm{1}_{B_{\beta/2} z_0} \leq \chi_{z_0} \leq \mathbbm{1}_{B_{\beta} z_0}, \quad \Sob(\chi_{z_0}) \ll \beta^{3-\sobdim} \]
        where $\beta>0$ depends on $k$ and is defined as
        \[ \beta = \log(1+ (\bR^k)^{-\frac{\delta}{12(\sobdim+3)}}). \]
        As $\bR^k$ is unbounded, we may assume that $\beta < 1/2$. 
        For all $0\leq x\leq 1$ we have $x/2\leq \log(1+x) \leq x$, so $\beta \sim (\bR^k)^{-\frac{\delta}{12(\sobdim+3)}}$.
        Hence, with this choice of $\beta$ we have
        \begin{align}\label{eq:properties-of-beta}
            \int_X\chi_{z_0}\msr_X \sim \msr_G(B_\beta) = \beta^6 \sim (\bR^k)^{-\frac{\delta}{2(\sobdim+3)}} \quad\text{and}\quad \Sob(\chi_{z_0}) \ll \beta^{3-\sobdim} \sim (\bR^k)^{\frac{(\sobdim-3)\delta}{12(\sobdim+3)}}. 
        \end{align}      
        
        Applying \Cref{lem:exp-horospheres} with $C = D_x(\bR^k)^{\delta/12}$, $y = a_{-\log\bt^k}x \in K$ and $f = \chi_{z_0}$ yields
        \begin{align}\label{eq:appl-eff-equi}
            \abs{\frac{1}{4}\int_{[1/2,1]^2}\chi_{z_0}((a_{\log C},a_{\log C})u_{\br}a_{-\log\bt^k}x)\dd\br - \int_X\chi_{z_0}\msr_X}[\bigg] \ll (\bR^k)^{\frac{(\sobdim-3)\delta}{12(\sobdim+3)}}C^{-\kappa},
        \end{align}
        and we assume $k\in\NN$ is large enough so that $C\geq C_x$.
        The right hand side above is
        \[ (\bR^k)^{\frac{(\sobdim-3)\delta}{12(\sobdim+3)}}C^{-\kappa} = D_x^{-\kappa}(\bR^k)^{-\frac{\delta}{2(\sobdim+3)}}, \]
        while $\int_X\chi_{z_0}\msr_X = O((\bR^k)^{-\frac{\delta}{2(\sobdim+3)}})$ by \eqref{eq:properties-of-beta}. We choose $D_x>0$ large enough, so that $D_x^{-\kappa}$ is smaller than the (explicit) constant in $\int_X\chi_{z_0}\msr_X = O((\bR^k)^{-\frac{\delta}{2(\sobdim+3)}})$. By this choice of $D_x$,
        for all $k$ large enough, we can find $\br\in [1/2,1]^2$ such that        
        \begin{align}\label{eq:box-zo}
            (a_{\log C},a_{\log C})u_{\br}a_{-\log\bt^k}x = u_{\br'}\cdot\{a_{\bt}\st \bt\in [0,\beta]\}\cdot \{v_{\bs}\st \bs\in[0,\beta]\}\cdot z_0 \in B_\beta z_0
        \end{align}
        for some $\br'\in [0,\beta]$. Using that $(a_{\log C},a_{\log C})u_{\br} = u_{C\br}(a_{\log C},a_{\log C})$ and that
        \[ (a_{\log C},a_{\log C})a_{-\log\bt^k} = a_{-\log(\bR^k)^{1-\delta/12}} \] 
        we obtain, by multiplying both sides in \eqref{eq:box-zo} with $u_{-\br'}$, that
        \[ u_{C\br-\br'} a_{-\log(\bR^k)^{1-\delta/12}}x \in \{a_{\bt}\st \bt\in [0,\beta]\}\cdot \{v_{\bs}\st \bs\in [0,\beta]\}\cdot a_{-\log(\bR^k)^{1-\delta/12}} v_{\bs_0}\omega. \]
        This is equivalent to
        \[ a_{-\log(\bR^k)^{1-\delta/12}}u_{(C\br - \br')(\bR^k)^{1-\delta/12}}x \in a_{-\log(\bR^k)^{1-\delta/12}}\cdot\{a_{\bt}\st \bt\in [0,\beta]\}\cdot \{v_{\bs}\st \bs\in [0,\beta(\bR^k)^{-1+\delta/12}]\}\cdot v_{\bs_0}\omega. \]    
        Multiplying by $a_{-\log(\bR^k)^{1-\delta/12}}$ on both sides we obtain
        \[ u_{\br^k} x = a_{\log\bt'} v_{\bs'} \omega, \]
        where $\br^k \defeq (C\br - \br')(\bR^k)^{1-\delta/12}= D_x\bR^k\br - (\bR^k)^{1-\delta/12}\br'$ and
        \[ \log\bt'\in [0,\log(1+(\bR^k)^{-\frac{\delta}{12(\sobdim+3)}})],\quad  \bs'\in [\tfrac{1}{2}(\bR^k)^{-1/2+\delta/10},\beta(\bR^k)^{-1+\delta/12} + (\bR^k)^{-1/2+\delta/10}]. \]
        Considering the bounds on $\br\in[1/2,1]^2$ and $\br'\in[0,\beta]$, we can find $k_0'\in\NN$ large enough so that for all $k \geq k_0'$
        \[ \br^k = D_x\bR^k\br - (\bR^k)^{1-\delta/12}\br' \in \rect[\tfrac{D_x}{4}R^k_1]{D_xR^k_1}\times\rect[\tfrac{D_x}{4}R^k_2]{D_xR^k_2} , \]
        as well as
        \[ 2(\bR^k)^{-1/2 + \delta/10} \geq (\bR^k)^{-\frac{\delta}{12(\sobdim+3)}-1 +\delta/12} + (\bR^k)^{-1/2+\delta/10} \geq \beta(\bR^k)^{-1+\delta/12} + (\bR^k)^{-1/2+\delta/10}, \]
        where the first inequality holds since for $\sobdim \geq 2$ 
        \[ -1/2 + \delta/10 \geq -\frac{\delta}{12(\sobdim+3)}-1 +\delta/12. \]
        Upon removing the first $k_0'$ elements of $(\bR^k)_{k\in\NN}$, this completes the proof of the \namecref{prop:approaching-omega}.
    \end{proof}
   
\subsection{Setup and Preliminaries} 
    Let $x\in X$ with dense $U$-orbit and $\omega\in X$ with periodic $U$-orbit and $1\geq \delta > 0$ be given. Let $(\bR^k)_{k\in\NN}$ and $(\br^k)_{k\in\NN}$ be as in \Cref{prop:approaching-omega}. We define
    \begin{align}\label{eq:def-x'}
         x' \defeq u_{\br^k}x = a_{\log \bt'}v_{\bs'}\omega
    \end{align}
    and recall that $\log\bt' \in \rect{2\log(1 + (\bR^k)^{-\frac{\delta}{12(\sobdim+3)}})}$ and $\bs'\in \rect[\frac{1}{2}(\bR^k)^{-1/2+\delta/10}]{(\bR^k)^{-1/2+\delta/10}}$.
     
    The following \namecref{cor:staying-close-to-closed-orbits} to \Cref{prop:approaching-omega} gives a quantitative control of the distance between $u_{\br}x'$ and the $U$-periodic orbit $U\omega$ for a time range of $\br$ up to size $(\bR^k)^{1/2 -\delta/5}$. Since the $U$-orbits of $x'$ and $\omega$ will not stay parallel in general, (unless $a_{\log \bt'}v_{\bs'}$ is in the centralizer of $U$), we need to reparameterize the speed of the $U$-orbit of $\omega$. For $\bt,\bs\in\RR^2_{>0}$ we define 
    \begin{align}\label{eq:def-reparam}
        \reparam[\bt,\bs](\br) \defeq 
        %\frac{\br}{\bt+\bs\br} = 
        \left(\frac{r_1}{t_1 + s_1r_1},\frac{r_2}{t_2 + s_2r_2}\right).
    \end{align}

    \begin{corollary}\label{cor:staying-close-to-closed-orbits}
        Let $0<\eps<1$. Then, there exists $k_0\in\NN$ such that for all $k\geq k_0$ the following two properties hold:
        \begin{enumerate}
            \item For all $\br\in \rect[\br^k]{\br^k + (\bR^k)^{1/2-\delta/5}}$ we have
            \begin{align}\label{eq:eps-for-all-r}
                d(u_{\br}x, u_{\reparam(\br-\br^k)}\omega) \leq \eps^{1/2}.
            \end{align}
        \item  For any given $\br'\in \rect[\br^k]{\br^k + (\bR^k)^{1/2-\delta/5}}$ there exists $\omega''\in U\omega$ such that for all\hfill\newline $\br~\in~\rect[\br']{\br'+(\bR^k)^{1/2-\delta/5}}$ we have
            \begin{align}\label{eq:eps-for-given-r'}
                d(u_{\br}x, u_{\reparam[\bt'',\bs''](\br-\br')}\omega'') \leq \eps^{1/3},
            \end{align}
            where $\log\bt''\in \rect{2\log(1+3(\bR^k)^{-\frac{\delta}{12(\sobdim+3)}})}$ and $\bs''\in\rect[\frac{1}{2}(\bR^k)^{-1/2+\delta/10}]{8(\bR^k)^{-1/2+\delta/10}}$. 
        \end{enumerate}
    \end{corollary}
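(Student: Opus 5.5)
The plan is to make the relative position of $u_{\br}x$ and the periodic orbit completely explicit via a rank-one commutation identity in each $\SL_2(\RR)$ factor. Then we check that the displacement element stays uniformly close to the identity on the whole window.

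\emph{The identity.} In $\SL_2(\RR)$ one has, whenever $1+rs\neq 0$,
\[ u_r v_s = v_{s/(1+rs)}\, \operatorname{diag}\big(1+rs,(1+rs)^{-1}\big)\, u_{r/(1+rs)}, \]
which is checked by multiplying out both sides. Conjugating by $a_{\log t}$ gives $u_r a_{\log t} = a_{\log t} u_{r/t}$, with the normalization making this exact to be checked. Combining the two, for $x' = a_{\log\bt'}v_{\bs'}\omega$ as in \eqref{eq:def-x'} and $\br\in\RR^2_{\geq0}$ we obtain the exact formula
\[ u_{\br}x' = a_{\log\bt'}\, v_{\bs'/(1+\bs'\br/\bt')}\, a_{2\log(1+\bs'\br/\bt')}\, u_{\reparam[\bt',\bs'](\br)}\,\omega, \]
with all operations taken componentwise. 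This explains the definition \eqref{eq:def-reparam}: the left-hand factor $g(\br) := a_{\log\bt'}v_{(\cdot)}a_{(\cdot)}$ is the displacement.

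\emph{Part (1).} Put $\br-\br^k$ in place of $\br$, so that $u_{\br}x = u_{\br-\br^k}x'$. For $\br-\br^k\in[0,(\bR^k)^{1/2-\delta/5}]^2$, \Cref{prop:approaching-omega} gives $\bs'\leq(\bR^k)^{-1/2+\delta/10}$ and $\log\bt'\leq 2\log(1+(\bR^k)^{-\delta/(12(\sobdim+3))})$. Hence
\[ \bs'(\br-\br^k)/\bt' \ll (\bR^k)^{-\delta/10}, \]
and all three factors of $g(\br-\br^k)$ lie in a box $B_{\beta_k}$ with $\beta_k\to0$ as $k\to\infty$, uniformly over the window. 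Since the metric on $X$ comes from a right-invariant metric on $G$, we have $d(gy,y)\le d(g,e)$. Choosing $k_0$ so that $d(g,e)\le\eps^{1/2}$ for all $g\in B_{\beta_{k_0}}$ then yields \eqref{eq:eps-for-all-r}.

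\emph{Part (2).} Set $\omega'' := u_{\reparam(\br'-\br^k)}\omega\in U\omega$. By the identity,
\[ u_{\br'}x = a_{\log\bt'}v_{\tilde\bs}a_{\tilde\bt}\,\omega'', \]
where $\tilde\bs = \bs'/(1+\bs'(\br'-\br^k)/\bt')$ and $\tilde\bt = 2\log(1+\bs'(\br'-\br^k)/\bt')$. Commute $a_{\tilde\bt}$ past $v_{\tilde\bs}$, which rescales $\tilde\bs$ by a factor $e^{\pm\tilde\bt}\in[1/2,2]$. This rewrites the point as $a_{\log\bt''}v_{\bs''}\omega''$ with
\[ \log\bt'' = \log\bt' + \tilde\bt \]
and $\bs''$ a bounded multiple of $\bs'$. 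The stated ranges
\[ \log\bt''\in[0,2\log(1+3(\bR^k)^{-\delta/(12(\sobdim+3))})],\qquad \bs''\in[\tfrac12(\bR^k)^{-1/2+\delta/10},\,8(\bR^k)^{-1/2+\delta/10}] \]
should follow for large $k$, since $\tilde\bt\ll(\bR^k)^{-\delta/10}$ is much smaller than $(\bR^k)^{-\delta/(12(\sobdim+3))}$. Now rerun the argument of part (1) with $(\bt'',\bs'',\omega'',\br')$ in place of $(\bt',\bs',\omega,\br^k)$. The window $\br-\br'\in[0,(\bR^k)^{1/2-\delta/5}]^2$ again gives a drift $\ll(\bR^k)^{-\delta/10}$, now with the constant $8$. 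Increasing $k_0$ then gives \eqref{eq:eps-for-given-r'}, with room to spare since $\eps^{1/3}\ge\eps^{1/2}$.

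\emph{Main obstacle.} The step needing the most care is the exact bookkeeping in part (2). One has to keep the lower bound $\tfrac12(\bR^k)^{-1/2+\delta/10}$ on $\bs''$: the denominator $1+\bs'(\br'-\br^k)/\bt'$ slightly shrinks $\bs'$, while the $A$-commutation rescales it. One also has to get the $A$-normalization consistent, since $a_t$ carries entries $e^{t/2}$. I would first do the computation in a single $\SL_2$ factor with exact constants, confirm that the net factor on $\bs'$ lies in $[1/2,8]$ for large $k$, and only then pass to the product.
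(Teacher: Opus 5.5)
Your argument is correct and is essentially the paper's proof. The paper uses the single-factor identity $u_r a_{\log t}v_s = a_{2\log(t^{1/2}+rst^{-1/2})}v_{s(1+rs/t)}u_{r/(t+sr)}$, which is exactly what your two-step factorization gives after commuting $a_{\tilde\bt}$ to the left; it then bounds the displacement uniformly on the window, and for part (2) re-bases at $\omega''=u_{\reparam(\br'-\br^k)}\omega$ with the same $\bt''$ and $\bs''$ as you obtain. On the bookkeeping you flagged: commuting $a_{\tilde\bt}$ leftward multiplies the $V$-parameter by $e^{+\tilde\bt}$, so $\bs''=\bs'(1+\bs'(\br'-\br^k)/\bt')\geq\bs'$, and it is this exact factor $\geq 1$ (not merely a factor in $[1/2,8]$) that preserves the lower bound $\tfrac12(\bR^k)^{-1/2+\delta/10}$.
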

    \begin{remark}
        One should think of $\br'$ to be of the form $(n^{2-\delta},m^{2-\delta})$ in the corollary above.
    \end{remark}
    \begin{proof}
        Notice first that for all $r,t,s\in\RR_{>0}$
        \begin{align}\label{eq:first-obs}
            \begin{pmatrix}1&r\\ & 1\end{pmatrix}\begin{pmatrix}t^{1/2}&\\st^{-1/2}& t^{-1/2}\end{pmatrix} = \begin{pmatrix}t^{1/2}+rst^{-1/2}&\\st^{-1/2} & (t^{1/2}+rst^{-1/2})^{-1}\end{pmatrix}\begin{pmatrix}1&\tfrac{r}{t+sr}\\ & 1\end{pmatrix}.
        \end{align}
        Using $x' = a_{\log {\bt'}}v_{\bs'}\omega$ and the above in both components yields
        \begin{align}\label{eq:ur_x'}
            u_{\br'}x' = a_{2\log(\bt'^{1/2}+\br'\bs'\bt'^{-1/2})}v_{\bs'(1 + \br'\bs'\bt'^{-1})}u_{\reparam(\br')}\omega.
        \end{align}
        Thus, if $\br$ is small enough, the $U$-orbits of $x'$ and $\omega$ stay close together -- but note the difference in the speeds of the $U$-orbits.
        Indeed, for $\br\in \rect{(\bR^k)^{1/2-\delta/5}}$ we have, using $\bt'\geq1$, that
        \begin{equation}\label{eq:bound-on-a_t}
            \begin{aligned}
                \abs{2\log(\bt'^{1/2}+\br'\bs'\bt'^{-1/2})} &= \abs{2\log(\bt'^{1/2}(1+\br'\bs'\bt'^{-1}))}\\
                &\leq \log(\bt') + 2\log(1+\br'\bs')\\
                &\leq \log(1 + (\bR^k)^{-\frac{\delta}{12(\sobdim+3)}}) + 2\log(1+ (\bR^k)^{1/2-\delta/5}(\bR^k)^{-1/2+\delta/10})\\
                &\leq \log(1 + (\bR^k)^{-\frac{\delta}{12(\sobdim+3)}}) + 2\log(1+(\bR^k)^{-\delta/10})\\
                &< 2\log(1+3(\bR^k)^{-\frac{\delta}{12(\sobdim+3)}}),
            \end{aligned}
        \end{equation}
        where the last inequality holds for $k$ sufficiently large. Using \eqref{eq:bound-on-a_t}, $\bt'\geq1$ and the bounds on $\bs'$, we further obtain
        \begin{equation}\label{eq:bound-on-v_s} 
            \begin{aligned}
                \abs{\bs'} \leq \abs{\bs'(1+\br'\bs'\bt'^{-1})} &= \abs{\bs'\bt'^{-1/2}(\bt'^{1/2}+\br'\bs'\bt'^{-1/2})}\\
                &\!\!\overset{\eqref{eq:bound-on-a_t}}{\leq} 2\abs{\bs'\bt'^{-1/2}}(1+3(\bR^k)^{-\frac{\delta}{12(\sobdim+3)}})\\
                &\leq 2(\bR^k)^{-1/2+\delta/10}(1+3(\bR^k)^{-\frac{\delta}{12(\sobdim+3)}})\\
                &\leq 8(\bR^k)^{-1/2+\delta/10},
            \end{aligned}
        \end{equation}
        where the last inequality follows from $\bR^k\geq 1$. 

        Let now $0<\eps<1$ be given. Then, \eqref{eq:eps-for-all-r} follows since $(\bR^k)_{k\in\NN} = (R^k_1,R^k_2)_{k\in\NN}$, is a sequence with $R^k_i\to\infty$, for $i=1,2$, and since the metric $d$ is right-invariant. Indeed, if we define $\br'\defeq(\br - \br^k)\in \rect{(\bR^k)^{1/2-\delta/5}}$, then by \eqref{eq:ur_x'} we have
        \begin{align*}
            d(u_{\br}x, u_{\reparam(\br-\br^k)}\omega) &= d(u_{\br'}x', u_{\reparam(\br')}\omega)\\
            &= d(a_{2\log(\bt'^{1/2}+\br'\bs'\bt'^{-1/2})}v_{\bs'(1 + \br'\bs'\bt'^{-1})}u_{\reparam(\br')}\omega, u_{\reparam(\br')}\omega)\\
            &= d(a_{2\log(\bt'^{1/2}+\br'\bs'\bt'^{-1/2})}v_{\bs'(1 + \br'\bs'\bt'^{-1})},e) \leq \eps^{1/2},
        \end{align*}
        where the last inequality holds for $k$ large enough, by \eqref{eq:bound-on-a_t} and \eqref{eq:bound-on-v_s}.
        
        Finally, for any $\br'\in\rect[\br^k]{\br^k + (\bR^k)^{1/2-\delta/5}}$ set $\omega'' \defeq u_{\reparam(\br'-\br^k)}\omega$. We have
        \begin{align*}
            u_{\br'}x = u_{\br' -\br^k}x' &= a_{2\log(\bt'^{1/2}+(\br'- \br^k)\bs'\bt'^{-1/2})}v_{\bs(1 + (\br'- \br^k)\bs'\bt'^{-1})}u_{\reparam(\br'-\br^k)}\omega\\
            &= a_{2\log(\bt'^{1/2}+(\br'- \br^k)\bs'\bt'^{-1/2})}v_{\bs'(1 + (\br'- \br^k)\bs'\bt'^{-1})}\omega''.
        \end{align*}
        Hence, $u_{\br'}x = a_{\log\bt''}v_{\bs''}\omega''$, for $\bt'' = (\bt'^{1/2}+(\br'-\br^k)\bs'\bt'^{-1/2})^2$ and $\bs'' = \bs'(1 + (\br'- \br^k)\bs'\bt'^{-1})$, and the necessary bounds for $\log\bt''$ and $\bs''$ follow from $\br'-\br^k\in\rect{(\bR^k)^{1/2-\delta/5}}$ and the calculations in \eqref{eq:bound-on-a_t} and \eqref{eq:bound-on-v_s}. We thus have
        \[ u_{\br}x = u_{\br-\br'}u_{\br'}x  = u_{\br-\br'}a_{\log\bt''}v_{\bs''}\omega'' = \left(u_{\br-\br'}a_{\log\bt''}v_{\bs''}u_{-\reparam[\bt'',\bs''](\br-\br')}\right)u_{\reparam[\bt'',\bs''](\br-\br')}\omega''. \]
        Using the observation in \eqref{eq:first-obs} and the bounds on $\bt''$, $\bs''$ and $\br-\br'$, we can choose $k_0\in\NN$ large enough so that $d(u_{\br-\br'}a_{\log\bt''}v_{\bs''}u_{-\reparam[\bt'',\bs''](\br-\br')},e)\leq\eps^{1/3}$ for all $k\geq k_0$.
    \end{proof} 
    \section{Discrete equidistribution in periodic \texorpdfstring{$U$}{U}-orbits}

    In this section we will pass from the continuous setting to the discrete setting. We will prove \Cref{prop:equi-at-almost-squares} which roughly says that if a periodic $U$-orbit is considered at a certain, large enough discrete set of times (a reparametrization of a large set of almost squares), then this set becomes equidistributed inside the $U$-orbit.

    We will continue using the notations and assumptions of the previous sections. That is, $x\in U$ has a dense $U$-orbit, $\omega\in X$ is a $U$-periodic orbit and $1\geq \delta > 0$ is fixed. Recall the definition of $\reparam[\bt,\bs]$ for $\bt,\bs\in\RR_{>0}^2$ given in \eqref{eq:def-reparam}. 

    Throughout this section, we set $A = \frac{1}{12(\sobdim+3)}$.

    \begin{proposition}\label{prop:equi-at-almost-squares}
        Let $\omega\in X$ have a well-rounded, periodic $U$-orbit. Let $f\in\Lip(X)$ be fixed with compact support and $C>c>0$. Let $\bR=(R_1,R_2)$ and let $\br\in\rect[cR_1]{CR_1}\times\rect[cR_2]{CR_2}$,  $\bt'\in[0,2\log(1+3\bR^{-A\delta})]$ and $\bs'\in\rect[\frac{1}{2}\bR^{-1/2+\delta/10}]{8\bR^{-1/2+\delta/10}}$ be given. Then there exists $\br' \in \rect[\br]{\br + \bR^{1/2-\delta/5}}$ such that
        \begin{align*}
            \sup_{\omega'\in U\omega}\abs{\frac{1}{\#I}\sum_{(m,n)\in I}f(u_{\reparam[\bt'',\bs''](m^{2-\delta}-r'_1,n^{2-\delta}-r'_2)}\omega ') - \int_{U\omega}f\dd\mu_{\omega}}[\bigg] = o(\norm{\bR^{-1}}[\infty]),
        \end{align*}
        where $\bt''=(\bt'^{1/2}+(\br'-\br)\bs'\bt'^{-1/2})^2$ and $\bs'' = \bs'(1+(\br'-\br)\bs'\bt'^{-1})$ and
        \[ I = \ZZ^2\cap \rect[\br'^{1/(2-\delta)}]{(\br' + \bR^{1/2 -\delta/5})^{1/(2-\delta)}}. \] 
    \end{proposition}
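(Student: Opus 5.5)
The plan is to reduce everything to a Weyl-sum estimate on the torus $\RR^2/\Lambda_\omega\simeq\TT^2$. Since $U\omega$ is periodic, $\omega'\mapsto f(u_{\bx}\omega')$ descends to a Lipschitz function on $\RR^2/\Lambda_\omega$, and $\mu_\omega$ corresponds to normalized Lebesgue measure. The quantity to control is therefore the discrepancy of the points $\psi(m,n)\defeq\reparam[\bt'',\bs''](m^{2-\delta}-r'_1,n^{2-\delta}-r'_2)$ modulo $\Lambda_\omega$, uniformly in the translate $\omega'$. Approximating $f$ on the torus by a trigonometric polynomial (Fejér kernel), with Lipschitz error $\ll\norm{f}[\Lip]H^{-1}\log H$, it suffices to show that for every nonzero dual-lattice frequency $\xi\in\Lambda_\omega^*$ with $\norm{\xi}\leq H$ the exponential sum
\[ S(\xi)=\frac{1}{\#I}\sum_{(m,n)\in I}e\big(\langle\xi,\psi(m,n)\rangle\big) \]
is small. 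The supremum over $\omega'$ is then harmless, since a translation only multiplies each $S(\xi)$ by a phase. Well-roundedness gives $\lambda_1(\Lambda_\omega^*)\asymp L_\omega^{-1/2}$ in every direction, so every nonzero $\xi$ has \emph{both} coordinates bounded below in a controlled way, using \Cref{lem:structure-of-U-orbits} for the dual lattice as well. Because $\psi$ splits coordinatewise, $S(\xi)$ factors as a product of two one-dimensional sums, and it is enough that one factor with $\xi_i\neq0$ is small.

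Next, the one-dimensional sums. Write $m=m_0+j$ with $m_0=\lceil r_1'^{1/(2-\delta)}\rceil$ and $0\leq j\leq J\asymp R^{1/2-\delta/5}/R^{(1-\delta)/(2-\delta)}$, which is a positive power of $R$ because $\delta>0$. This margin is exactly where $\delta$ is needed.

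1. Expand $m^{2-\delta}-r'_1$ in $j$ by Taylor (\Cref{lem:taylor}). Compose with the Möbius-type map $r\mapsto r/(t''+s''r)$, whose argument $r$ is at most $R^{1/2-\delta/5}$ while $s''\approx R^{-1/2+\delta/10}$, so $s''r\leq R^{-\delta/10}$ and the geometric expansion converges quickly.
2. Truncate at a degree $d=d(\delta)$ so that the remainder times $\norm{\xi}\leq H$ is $o(1)$ uniformly in $j\leq J$.
3. The result is a polynomial phase $\xi_1 P(j)$. Its linear coefficient is $\approx(2-\delta)m_0^{1-\delta}/t''$, of size $R^{(1-\delta)/(2-\delta)}$, and its quadratic coefficient $\approx m_0^{-\delta}-2s''(\cdot)^{\ldots}$ is a combination of a tiny term from $m^{2-\delta}$ and a term $\asymp s''m_0^{2-2\delta}$ from the shear.

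The choice of $\br'\in\rect[\br]{\br+\bR^{1/2-\delta/5}}$ is the freedom I would use to put some coefficient into a "minor arc" regime. The lower bound $s'\geq\frac12R^{-1/2+\delta/10}$ is what guarantees the shear-induced quadratic coefficient is genuinely nonzero and of known size. This is why Proposition \ref{prop:approaching-omega} insisted on a lower bound. Linearizing as in \Cref{lem:linearize-argument}, I would then apply Weyl's inequality (\Cref{lem:weyls-inequality}) to the degree-$d$ (or degree-$2$) phase. It gives $|S|\ll J^{-c}$ unless the leading coefficient $\xi_1 a_k$ is within $J^{-k+\eta}$ of a rational with small denominator.

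The main obstacle is excluding these major arcs uniformly over all $\xi\in\Lambda_\omega^*$ with $\norm{\xi}\leq H$. Here is what I would try first. As $\br'$ varies over its window, the relevant coefficient (say the quadratic one, which depends on $\br'-\br$ through $\bs''$ and $\bt''$) moves through an interval of length a positive power of $R$ times its Diophantine tolerance. The measure of bad $\br'$ for each fixed $\xi$ is then a proportion $\ll R^{-c}$, and a union bound over the $\ll H^2L_\omega$ frequencies leaves a good $\br'$. Pick $H=R^{c'}$ small to balance this against the Fejér error. Combining the error terms yields a total discrepancy that is a negative power of $R$, i.e.\ $o(\norm{\bR^{-1}}[\infty])$ in the sense needed to make the average over $I$ agree with $\int f\dd\mu_\omega$ in the limit.
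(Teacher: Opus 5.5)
The reduction matches the paper: passing to $\RR^2/\Lambda_\omega$, truncating the Fourier expansion, factoring the sum, choosing a coordinate with $|\xi_i|\gg L_\omega^{-1/2}$ by well-roundedness, Taylor expanding, and applying \Cref{lem:weyls-inequality}. The gap is in the step you call the main obstacle, excluding major arcs by varying $\br'$.

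\textbf{Why varying $\br'$ gives no freedom.} With $t'',s''$ as in the statement, each coordinate satisfies the exact identity
\[ \reparam[t'',s''](y-r') = \reparam[t',s'](y-r) - \reparam[t',s'](r'-r) \qquad (y\in\RR). \]
This is the same computation as \eqref{eq:ur_x'}. So for every admissible $\br'$, the phase $\langle\xi,\psi(m,n)\rangle$ is one fixed function $F_1(m)+F_2(n)$ plus an additive constant. Hence $|S(\xi)|$ depends on $\br'$ only through the window $I$.

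In particular, with $m_0$ fixed, your quadratic coefficient in $j$ does not move at all as $r_1'$ varies. The change through $s'',t''$ is exactly cancelled by the change in the constant term $m_0^{2-\delta}-r_1'$ feeding into the higher-order terms.

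The only real freedom is the integer $m_0$, with $\asymp N=\#I_i$ possible values. As $m_0$ steps by one, the coefficient $\tfrac12F_1''(m_0)$ jumps by about $\tfrac12F_1'''(m_0)$, which for small $\delta$ is a large power of $R$. So no interval is swept continuously. Showing that one of these $\asymp N$ values avoids the major arcs at scale $N^{-2}$ is an equidistribution problem of the same type you started with.

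Two further problems with the coefficients you proposed:
\begin{itemize}
\item The quadratic coefficient has size $\asymp|\xi_1|R^{1/2+\delta/10-\delta/(2-\delta)}$. For small $\delta$ this says nothing modulo $1$.
\item The leading coefficient of the degree-$d$ truncation satisfies $|\widetilde c_d|N^d\to0$, so it always lies on a major arc.
\end{itemize}

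\textbf{The missing idea: no Diophantine exclusion is needed.} The paper chooses $\br'$ only so that ${r'_i}^{1/(2-\delta)}\in\NN$, and expands around the left endpoint of $I_i$. The coefficients then satisfy
\[ |\widetilde c_\ell|\,N^\ell \asymp R^{\frac{10-7\delta+\delta^2-\delta(2-\delta)\ell}{10(2-\delta)}}, \]
which drops by a factor $R^{-\delta/10}$ per degree. Both the upper and the lower bound on $s'$ enter here.

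Let $\ell_0\geq 2$ be the last degree for which this exponent $\eta$ is positive, so $\eta\in(0,\delta/10]$. Then:
\begin{itemize}
\item $|\widetilde c_{\ell_0}|\ll R^{-\delta^2/(2(2-\delta))}\to 0$;
\item $|\widetilde c_{\ell_0}|N^{\ell_0}\gg R^{\eta}\to\infty$.
\end{itemize}
For $q\leq\rho^{-O_d(1)}$ and bounded frequencies with $|\xi_i|\asymp L_\omega^{-1/2}$, the quantity $|q\xi_i\widetilde c_{\ell_0}|$ is therefore below $1/2$. Hence
\[ \|q\xi_i\widetilde c_{\ell_0}\|_{\ZZ}=|q\xi_i\widetilde c_{\ell_0}|\gg L_\omega^{-1/2}R^{\eta}N^{-\ell_0}, \]
which exceeds $\rho^{-O_d(1)}N^{-\ell_0}$ once $R$ is large. \Cref{lem:weyls-inequality} requires all coefficients to be simultaneously close to rationals with a common small denominator. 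So this one intermediate coefficient rules out a large Weyl sum for every nonzero frequency at once. No union bound over frequencies and no clever choice of $\br'$ is needed.
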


    The proof of \Cref{thm:main} is straightforward assuming \Cref{prop:equi-at-almost-squares}, \Cref{prop:approaching-omega} and \Cref{cor:staying-close-to-closed-orbits}. 
    
    \begin{proof}[Proof of \Cref{thm:main}]
        Assume that $\{u_{(m^{2-\delta},n^{2-\delta})}x\st m,n\in\NN\}$ is not dense in $X$. Then there exists $\rho>0$ and $z\in X$ such that
        \[ \{u_{(m^{2-\delta},n^{2-\delta})}x\st m,n\in\NN\}\cap B_\rho z = \emptyset. \]
        Let $f$ be a non-negative Lipschitz function satisfying $\mathbbm{1}_{B_{\rho/2} z} \leq f \leq \mathbbm{1}_{B_\rho z}$. Then it clearly holds that $\int_X f\dd\msr_X > \msr_X\left(B_{\rho/2} z\right) > 0$, whereas for any $I\subseteq\NN^2$ we have 
        \begin{align}\label{eq:sum-is-zero}
            \sum_{(m,n)\in I} f(u_{(m^{2-\delta},n^{2-\delta})}x) = 0. 
        \end{align}

        By \cite{KM96, Ve10}, large, periodic $U$-orbits equidistribute in $X$. Thus, we can pick $L$ big enough such that for all periodic $U$-orbits $U\omega$ of size $L_{\omega} \geq L$ we have
        \[ \int_{U\omega}f\dd\mu_{\omega} > \frac{1}{2}\msr_X\left(B_{\rho/2} z\right). \]
        Let $(\bR^k)_{k\in\NN}$ and $(\br^k)_{k\in\NN}$ be the sequences given by \Cref{prop:approaching-omega} applied to $x$ and a point $\omega\in X$ with a well-rounded periodic $U$-orbit of size $L_\omega \geq L$. We apply \Cref{prop:equi-at-almost-squares} to $f$, $c=\frac{D_x}{4}$, $C=D_x$ (with $D_x$ as in \Cref{prop:approaching-omega}), $\bR = \bR^k$ and $\br = \br^k$, where we choose $k\in\NN$ big enough so that there exists $\br' \in \rect[\br^k]{\br^k + (\bR^k)^{1/2-\delta/5}}$ satisfying
        \begin{align*}
            \sup_{\omega'\in U\omega}\abs{\frac{1}{\#I}\sum_{(m,n)\in I}f(u_{\reparam[\bt'',\bs''](m^{2-\delta}-r'_1,n^{2-\delta}-r'_2)}\omega ') - \int_{U\omega}f\dd\mu_{\omega}}[\bigg] < \frac{1}{2}\msr_X\left(B_{\rho/2} z\right)
        \end{align*}
        where $I = \ZZ^2\cap\rect[\br'^{1/(2-\delta)}]{(\br' + (\bR^k)^{1/2-\delta/5})^{1/(2-\delta)}}$.
        This implies that
        \begin{align}\label{eq:positive-at-almost-squares}
            \sum_{(m,n)\in I}f(u_{\reparam[\bt'',\bs''](m^{2-\delta}-r'_1,n^{2-\delta}-r'_2)}\omega ') > \# I\left(\int_{U\omega}f\dd\mu_{\omega} - \frac{1}{2}\msr_X\left(B_{\rho/2} z\right)\right) > 0 \quad \text{for all } \omega'\in U\omega.
        \end{align}
        Now we use \Cref{cor:staying-close-to-closed-orbits}, more precisely \eqref{eq:eps-for-given-r'}, for all $\br=(m^{2-\delta},n^{2-\delta})$ with $(m,n)\in I$, to deduce that for $k$ large enough there exists $\omega''\in U\omega$ with
        \begin{align}\label{eq:final-estimate}
            \sum_{(m,n)\in I} f(u_{(m^{2-\delta},n^{2-\delta})}x) \geq \sum_{(m,n)\in I}\left(f(u_{\reparam[\bt'',\bs''](m^{2-\delta}-r'_1,n^{2-\delta}-r'_2)}\omega'') + O_f(\eps^{1/3})\right).
        \end{align}
        If we choose $\eps$ small enough so that the right hand side in \eqref{eq:final-estimate} is positive, which is possible by \eqref{eq:positive-at-almost-squares}, the left hand side is positive as well. This is a contradiction to \eqref{eq:sum-is-zero}.
    \end{proof}

\subsection{Proof of \texorpdfstring{\Cref{prop:equi-at-almost-squares}}{prop:equi-at-almost-squares}}

    The proof idea is as follows. We will reduce the statement in the proposition to an effective equidistribution statement in the torus $\TT^2$. More precisely, the result will follow by proving that the evaluations of a Taylor approximation of $\reparam[\bt,\bs]$ given in \Cref{lem:taylor} at a large set will be sufficiently dense inside $\TT^2$. We will achieve this using an effective version of Weyl's inequality (see \Cref{lem:weyls-inequality}).
     
    Recall that 
    \[ \reparam[\bt,\bs](\br) = 
    %\left(\frac{\br}{\bt + \br\bs}\right) = 
    \left(\frac{r_1}{t_1 + s_1r_1},\frac{r_2}{t_2 + s_2r_2}\right),\]
    for $\bt,\bs\in\RR^2_{>0}$. We write $\reparam[t_i,s_i](r_i)$ for the $i$-th component of $\reparam[\bt,\bs](\br)$, for $i=1,2$. We will first approximate $\reparam[\bt,\bs]$ by its Taylor polynomial. 
    \begin{lemma}[Taylor expansion of ${\reparam[\bt,\bs]}$]\label{lem:taylor} For $i=1,2$ let $R_i> 1$ and $J_i = \rect{R_i^{1/2-\delta/5}}$. Assume that $\log t_i\in\rect{2\log(1+3R_i^{-A\delta})}$ and $s_i \in \rect[\frac{1}{2}R_i^{-1/2+\delta/10}]{8R_i^{-1/2+\delta/10}}$. Define $c_{i,\ell} \defeq (-1)^{\ell}{s_i}^{\ell-1}{t_i}^{-\ell}$ for $0\leq \ell \leq d_i\defeq \max\{\lceil 5\delta^{-1}\rceil,2\}$ and $p_{t_i,s_i}(r) \defeq \sum_{\ell=0}^{d_i}c_{i,\ell}r^\ell$.
    Then for all $r_i\in J_i$ we have,
    \[ \abs{\reparam[t_i,s_i](r_i) - p_{t_i,s_i}(r_i)} = O\left((\log R_i)^{-1}\right). \]
    \end{lemma}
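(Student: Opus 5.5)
The plan is to recognize $p_{t_i,s_i}$ as a truncated geometric series and bound the tail. Fix $i$ and write $t=t_i$, $s=s_i$, $R=R_i$, $d=d_i$, and $x\defeq sr/t$ for $r\in J_i$. The coefficients $c_{i,\ell}=(-1)^\ell s^{\ell-1}t^{-\ell}$ give
\[ p_{t,s}(r)=\frac{1}{s}\sum_{\ell=0}^{d}(-x)^\ell ,\]
which is the degree-$d$ Taylor polynomial at $r=0$ of
\[ \Phi_{t,s}(r)\defeq\frac{1}{s}\cdot\frac{1}{1+x}=\frac{t}{s(t+sr)} .\]
Directly from \eqref{eq:def-reparam} one has the algebraic identity
\[ \reparam[t,s](r)=\frac{r}{t+sr}=\frac{1}{s}-\Phi_{t,s}(r). \]
So the quantitative core of the lemma is a bound on $\abs{\Phi_{t,s}(r)-p_{t,s}(r)}$, and the plan is to bound exactly this tail. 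I then track how the identity above matches it with the left-hand side $\abs{\reparam[t,s](r)-p_{t,s}(r)}$ as the lemma is worded.

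\textbf{Step 1 (smallness of $x$).} From $\log t\ge 0$ we get $t\ge 1$. Using $s\le 8R^{-1/2+\delta/10}$ and $r\le R^{1/2-\delta/5}$, this gives
\[ 0\le x\le sr\le 8R^{-\delta/10}, \]
so $x<1/2$ once $R$ is large. The remaining small $R$ only affect the implied constant.

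\textbf{Step 2 (geometric tail).} The exact identity
\[ \frac{1}{1+x}-\sum_{\ell=0}^{d}(-x)^\ell=\frac{(-x)^{d+1}}{1+x} \]
gives $\abs{\Phi_{t,s}(r)-p_{t,s}(r)}\le s^{-1}x^{d+1}$. Combining $s^{-1}\le 2R^{1/2-\delta/10}$ with Step 1 yields
\[ \abs{\Phi_{t,s}(r)-p_{t,s}(r)}\le 2\cdot 8^{d+1}R^{\frac12-\frac{\delta}{10}-\frac{(d+1)\delta}{10}} .\]
The choice $d\ge 5\delta^{-1}$ gives $(d+1)\delta/10\ge \frac12+\frac{\delta}{10}$, so the exponent is at most $-\delta/5$. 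The bound is therefore $O_\delta(R^{-\delta/5})$, which is certainly $O((\log R)^{-1})$. This is the reason for the degree choice $d_i=\max\{\lceil 5\delta^{-1}\rceil,2\}$. The key point is that the prefactor $1/s\sim R^{1/2-\delta/10}$ is large, so the truncation degree must be of size $\delta^{-1}$ to beat it.

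\textbf{Step 3 (passing to $\reparam$; the main obstacle).} By the identity above, $\reparam[t,s](r)=s^{-1}-\Phi_{t,s}(r)$. Step 2 therefore gives $\reparam[t,s](r)=s^{-1}-p_{t,s}(r)+O((\log R)^{-1})$ uniformly on $J_i$.

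This is where care is needed to match the statement literally. At $r=0$ one has $\reparam[t,s](0)=0$ while $p_{t,s}(0)=c_{i,0}=1/s$, which is of size $R^{1/2-\delta/10}$. So the difference $\abs{\reparam[t,s](r)-p_{t,s}(r)}$ is small only once $\reparam$ is read through the normalization $r\mapsto s^{-1}-\reparam[t,s](r)$, which equals $\Phi_{t,s}$.

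I expect this normalization to be harmless for the later use in \Cref{prop:equi-at-almost-squares}. There the approximant only enters through $u_{(\cdot)}\omega'$ with a supremum over $\omega'\in U\omega$. The constant $s^{-1}$ is absorbed by replacing $\omega'$ with $u_{s^{-1}}\omega'$. The sign flip only replaces the coefficients $c_{i,\ell}$ by $-c_{i,\ell}$, which does not affect the Weyl-sum estimates. I would state this reduction explicitly.

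The first thing I would check is that this convention agrees with how $p_{t_i,s_i}$ is used in \Cref{lem:linearize-argument}. The estimate itself, Step 2, is routine.
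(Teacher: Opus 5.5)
Your argument is correct and is essentially the paper's: the derivative formula $(-1)^\ell\ell!\,ts^{\ell-1}/(t+rs)^{\ell+1}$ used there is exactly that of your $\Phi_{t,s}=s^{-1}-\reparam[t,s]$ (not of $\reparam[t,s]$), and the paper's Lagrange remainder at order $d+1$ gives the same exponent $\frac12-\frac{\delta}{10}-\frac{(d+1)\delta}{10}\le-\frac{\delta}{5}$ that your exact geometric tail $s^{-1}x^{d+1}/(1+x)$ produces more cleanly. Your diagnosis is also right: as literally stated the lemma already fails at $r_i=0$, where the difference is $s_i^{-1}\gg R_i^{1/2-\delta/10}$, so both proofs actually establish $\abs{s_i^{-1}-\reparam[t_i,s_i](r_i)-p_{t_i,s_i}(r_i)}=O(R_i^{-\delta/5})$, and your downstream repair is valid (the constant $s^{-1}$ is absorbed into $\omega'$ via the supremum over $U\omega$, and replacing $P$ by $-P$ only conjugates the Weyl sums).
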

    \begin{proof}
        The proof for both $i=1,2$ is the same, so we drop all subscripts $i$ from the notations. First, notice that the $\ell$-th derivative of $\reparam[t,s]$ is
        \[ \reparam[t,s]^{(\ell)}(r) = (-1)^\ell\ell!\frac{ts^{\ell-1}}{(t+ rs)^{\ell+1}}. \]
        Moreover, by the bounds on $t\in[1,(1+3R^{-A\delta})^2]$, $s$ and $r$ we have $\frac{t}{(t+rs)^{\ell+1}}\leq (1+3R^{-A\delta})^2$. Hence,
        \begin{align}\label{eq:taylor-bound}
            \abs{\frac{\reparam[t,s]^{(\ell)}(r)}{\ell!}(R^{1/2-\delta/5})^\ell}[\bigg] =  \abs{\frac{ts^{\ell-1}}{(t+ rs')^{\ell+1}}}[\bigg](R^{1/2-\delta/5})^\ell \ll R^{(\ell-1)(-1/2+\delta/10) + (1/2-\delta/5)\ell}(1+3R^{-A\delta})^2.
        \end{align} 
        For $\ell = d+1$ the exponent of $R$ in the main term
        \[ (\ell-1)(-1/2+\delta/10) + (1/2-\delta/5)\ell = (1/2 -\delta/10) -\ell\delta/10, \]
        is negative by the choice of $d$. Hence, by Taylor expansion up to order $d\geq 2$ we obtain
        \[ \reparam[t,s](r) = \sum_{\ell=0}^d\frac{\reparam[t,s](0)}{\ell!}r^\ell + O(r^{d+1}) =  \sum_{\ell=0}^d\frac{(-1)^{\ell}s^{\ell-1}}{t^\ell}r^\ell +  O(r^{d+1}) = p_{t,s}(r) + O(r^{d+1}),  \] 
        and the error term $O(r^{d+1})$ is of order $O((\log R)^{-1})$ by \eqref{eq:taylor-bound}.
    \end{proof}

    \begin{proof}[Proof of \Cref{prop:equi-at-almost-squares}]
        The goal of the proof is to find times $\br' \in \rect[\br]{\br+\bR^{1/2-\delta/5}}$ such that $u_{P_{\br'}(m,n)\omega'}$ is sufficiently dense in $U\omega$, when $(m,n)$ ranges over elements in 
        \[ I = \ZZ^2\cap \rect[\br'^{1/(2-\delta)}]{(\br' + \bR^{1/2 -\delta/5})^{1/(2-\delta)}} \]
        and $P_{\br'} \defeq (p_{t_1,s_1},p_{t_2,s_2})$ consists of the polynomials provided by \Cref{lem:taylor}.\\
        
        For $R>1$ and $r\in[cR,CR]$ consider the interval $J \defeq [(r^{1/(2-\delta)}, (r + R^{1/2-\delta/5})^{1/(2-\delta)}]$. If $R$ is large enough, then $\NN\cap J\neq\emptyset$.
        This follows immediately from the mean value theorem applied to the function $x\to x^{1/(2-\delta)}$ on the interval $[cR,(C+1)R]\supseteq[r,r + R^{1/2-\delta/5}]$. Indeed, it implies that
        \begin{align}\label{eq:bigger-than-1}
            R^{\tfrac{\delta + 2\delta^2}{10(2-\delta)}} \gg \frac{1}{2-\delta}(cR)^{\tfrac{1}{2-\delta}-1}R^{1/2-\delta/5} \geq \abs{J} \geq \frac{1}{2-\delta}((C+1)R)^{\tfrac{1}{2-\delta}-1}R^{1/2-\delta/5} \gg R^{\tfrac{\delta + 2\delta^2}{10(2-\delta)}}.
        \end{align}
        If $R$ is large enough, then $\abs{J}\geq 1$ and so $\NN\cap J\neq \emptyset$.
            
        Let now
        \begin{align}\label{eq:interval}
            J_1 \times J_2 = [(cR_1)^{1/(2-\delta)}, (CR_1 + R_1^{1/2-\delta/5})^{1/(2-\delta)}]\times[(cR_2)^{1/(2-\delta)}, (CR_2 + R_2^{1/2-\delta/5})^{1/(2-\delta)}],
        \end{align}
        where $R_i$ is large enough so that $(r'_1)^{1/(2-\delta)}\in \NN\cap J_1$ and $(r'_2)^{1/(2-\delta)}\in \NN\cap J_2$ for some $r_1,r_2'\in\RR$. 
        Define $\br' = (r'_1,r'_2)$ and notice that $r'_i\in[cR_i, CR_i + R_i^{1/2-\delta/5}]\subseteq[cR_i,(C+1)R_i]$, for $i=1,2$. 
        
        Set $\bR = (R_1,R_2)$ and assume $\log\bt'\in \rect{2\log(1+3\bR^{-A\delta})}$ and $\bs'\in \rect[\frac{1}{2}\bR^{-1/2+\delta/10}]{8\bR^{-1/2+\delta/10}}$ are given. Let $P_{\br'} = (p_{t'_1,s'_1},p_{t'_2,s'_2})$, where $p_{t'_1,s'_1}$ and $p_{t'_2,s'_2}$ are the polynomials defined in \Cref{lem:taylor}, that is $p_{t'_i,s'_i}(r) = \sum_{\ell=0}^{d_i}c_{i,\ell}r^\ell$, where $c_{i,\ell} = (-1)^{\ell}{s'_i}^{\ell-1}{t'_i}^{-\ell}$ for $0\leq \ell \leq d_i= \max\{\lceil 5\delta^{-1}\rceil,2\}$, for $i=1,2$. Further, set
        \begin{align}\label{eq:integer-points}
            I \defeq I_1\times I_2 = \ZZ^2\cap\left[{r'}_1^{1/(2-\delta)}, (r'_1 + R_1^{1/2-\delta/5})^{1/(2-\delta)}\right]\times\left[{r'}_2^{1/(2-\delta)}, (r'_2 + R_2^{1/2-\delta/5})^{1/(2-\delta)}\right].
        \end{align}
        By the same argument as above, using the mean value theorem for $x \mapsto x^{1/(2-\delta)}$ on the intervals $[cR_1, (C+2)R_1]$ and $[cR_2, (C+2)R_2]$, we obtain
        \begin{align}\label{eq:bound-number-of-points}
            (R_1R_2)^{\tfrac{\delta + 2\delta^2}{10(2-\delta)}}  \gg \#I \gg (R_1R_2)^{\tfrac{\delta + 2\delta^2}{10(2-\delta)}}.
        \end{align}
        Observe that if $(m,n)\in I$, then
        \begin{align}\label{eq:int-close-to}
            \abs{m^{2-\delta} - r'_1} \leq R_1^{1/2-\delta/5} \quad\text{and}\quad \abs{n^{2-\delta} - r'_2} \leq R_2^{1/2-\delta/5}
        \end{align}
        by the definition of $I$. By \Cref{lem:taylor}, together with continuity of the action of the horospherical subgroup $U$, we get
        \[ d(u_{\reparam(m^{2-\delta} - r'_1,n^{2-\delta} - r'_2)}\omega', u_{P_{\br'}(m^{2-\delta} - r'_1,n^{2-\delta} - r'_2)}\omega') = O\left(\norm{(\log \bR)^{-1}}[\infty]\right) \]
        for all $\omega'\in U\omega$. Thus, \Cref{prop:equi-at-almost-squares} follows if we can show that for any $\omega'\in U\omega$ 
        \begin{align}\label{eq:first-appr} 
            \abs{\frac{1}{\#I}\sum_{(m,n)\in I}f(u_{P_{\br'}(m^{2-\delta} - r'_1,n^{2-\delta} - r'_2)}\omega ') - \int_{U\omega}f(z)\dd\mu_{\omega}(z)}[\bigg] \to 0, \quad \text{as } \norm{(\log \bR)^{-1}}[\infty] \to 0.
        \end{align}
        In order to simplify $P_{\br'}(m^{2-\delta} - r'_1,n^{2-\delta} - r'_2)$ for $(m,n)\in I$ we linearize the argument in $m$ and $n$ using Taylor approximation.
        
        \begin{lemma}\label{lem:linearize-argument}
            Let $R\geq 1$, $C>c>0$, $r'\in[cR,CR]$ and consider $J \defeq [r'^{1/(2-\delta)}, (r' + R^{1/2-\delta/5})^{1/(2-\delta)}]$. For any $r \in J$, we have
            \[ r^{2-\delta} - r' = l_{r'}(r) + O(R^{-2\delta/5}), \]
            where $l_{r'}(r) \defeq (2-\delta)r'^{\tfrac{1-\delta}{2-\delta}}(r - r'^{\tfrac{1}{2-\delta}})$ is linear in $r$. 
            Thus, for any $(m,n)\in I$, with $I$ as defined in \eqref{eq:integer-points}, we have
            \begin{align}\label{eq:bound-on-evaluation}
               \norm{P_{\br'}(m^{2-\delta} - r'_1,n^{2-\delta} - r_2')- P_{\br'}(l_{\br'}(m,n))}[\infty] &= O(\norm{\bR^{-2\delta/5}}[\infty])
            \end{align}
            where $l_{\br'}(m,n) \defeq (l_{r'_1}(m),l_{r_2'}(n))$.
        \end{lemma}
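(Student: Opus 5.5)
The plan is to Taylor expand the function $g(r)=r^{2-\delta}$ around the base point $r_0 \defeq r'^{1/(2-\delta)}$. By definition $g(r_0)=r'$, and
\[ g'(r_0) = (2-\delta)r_0^{1-\delta} = (2-\delta)r'^{\frac{1-\delta}{2-\delta}}. \]
Hence $r^{2-\delta}-r' = l_{r'}(r) + E(r)$, where the Lagrange remainder is
\[ E(r) = \tfrac12 g''(\xi)(r-r_0)^2, \qquad \xi\in J. \]
Since $g''(\xi)=(2-\delta)(1-\delta)\xi^{-\delta}$, and $\xi\ge r_0\gg R^{1/(2-\delta)}$ (because $r'\ge cR$), we get
\[ |g''(\xi)|\ll R^{-\delta/(2-\delta)}. \]
(If $\delta=1$ then $g''\equiv 0$ and there is nothing to prove.)

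Next I bound $|r-r_0|\le|J|$ exactly as in \eqref{eq:bigger-than-1}, by the mean value theorem for $x\mapsto x^{1/(2-\delta)}$ on $[cR,(C+1)R]$:
\[ |J|\ll R^{\frac{1}{2-\delta}-1}R^{1/2-\delta/5}. \]
Combining,
\[ |E(r)|\ll R^{-\frac{\delta}{2-\delta}+\frac{2}{2-\delta}-2+1-\frac{2\delta}{5}} = R^{-\frac{2\delta}{5}}, \]
since $\frac{2-\delta}{2-\delta}-1=0$. This gives the first claim, with implied constants depending only on $c,C,\delta$.

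For the second claim, apply the first coordinatewise with $r'=r'_i$ and $R=R_i$. This is legitimate because $r'_i\in[cR_i,(C+1)R_i]$ and $m\in I_1$, $n\in I_2$ lie in the corresponding intervals. It remains to transfer the error through the polynomial $p_{t'_i,s'_i}$. Both arguments, $a\defeq m^{2-\delta}-r'_1$ and $b\defeq l_{r'_1}(m)$, lie in $[-1,R_1^{1/2-\delta/5}+1]$: indeed $a\in[0,R_1^{1/2-\delta/5}]$ by \eqref{eq:int-close-to}, and $|a-b|=O(1)$. By the mean value theorem,
\[ |p(a)-p(b)|\le \sup|p'|\cdot|a-b|, \qquad p'(r)=\sum_{\ell=1}^{d}\ell c_\ell r^{\ell-1}. \]
Using $|c_\ell|\ll R^{(\ell-1)(-1/2+\delta/10)}$ (from $t'\ge1$ and the bound on $s'$), each term is bounded by
\[ R^{(\ell-1)(-1/2+\delta/10)+(\ell-1)(1/2-\delta/5)} = R^{-(\ell-1)\delta/10}\le 1. \]
So $\sup|p'|=O_\delta(1)$ on this range, and therefore $|p(a)-p(b)|=O(R_i^{-2\delta/5})$. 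Taking the maximum over $i=1,2$ yields \eqref{eq:bound-on-evaluation}.

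The only delicate point is the exponent bookkeeping in the remainder estimate, and checking that the derivative of the Taylor polynomial stays bounded on the slightly enlarged interval. The latter is where the specific sizes $s'\sim R^{-1/2+\delta/10}$ and $|J|\sim R^{1/2-\delta/5}$ interact, exactly as in \eqref{eq:taylor-bound}.
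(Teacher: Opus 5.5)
Your proposal is correct and follows the paper's proof step for step. It uses a second-order Taylor expansion of $x\mapsto x^{2-\delta}$ at $r'^{1/(2-\delta)}$, with remainder $\ll r'^{-\delta/(2-\delta)}|J|^2 \ll R^{-2\delta/5}$ obtained from the mean-value bound on $|J|$, and then the mean value theorem for $p_{t',s'}$ with $p'=O(1)$ because $s'r=O(1)$ on the relevant interval; your use of the lower bound $t'\geq 1$ to control $t'^{-\ell}$ is in fact the precise form of the step the paper phrases as $t'=O(1)$.
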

        \begin{proof}
            Consider the function $x\to x^{2-\delta}$ on $J$ and its Taylor expansion around $r'^{\tfrac{1}{2-\delta}}$ evaluated at~$r$, to obtain
            \[ r^{2-\delta} = r' + (2-\delta)r'^{\tfrac{1-\delta}{2-\delta}}(r - r'^{\tfrac{1}{2-\delta}}) + O\left(r'^{\tfrac{-\delta}{2-\delta}}(r-r'^{\tfrac{1}{2-\delta}})^2\right) = r' + l_{r'}(r) + O\left(r'^{\tfrac{-\delta}{2-\delta}}\abs{J}^2\right). \]
            Bounds for $\abs{J}$ may be obtained in the same way as the bounds in \eqref{eq:bigger-than-1}. These, together with the bounds on $r'$, yield $R^{-2\delta/5} \gg r'^{\tfrac{-\delta}{2-\delta}}\abs{J}^2\ \gg  R^{-2\delta/5}$. This establishes that
            \begin{align}\label{eq:linearization-bound}
                r^{2-\delta} - r' = l_{r'}(r) + O(R^{-2\delta/5})    
            \end{align}
            as claimed.
            
            The second statement follows by bounding the derivatives of $p_{t'_1,s'_1}$ and $p_{t'_2,s'_2}$ on $J_1$ and $J_2$ (defined in \eqref{eq:interval}), respectively. Since the argument is analogous for both, we just consider $p_{t'_1,s'_1}$ and $J_1$. By the mean value theorem and part one of the \namecref{lem:linearize-argument} we have for some $r\in[m^{2-\delta} -  r'_1 - 1, m^{2-\delta} -  r'_1 +1]$ that
            \[ \abs{p_{t'_1,s'_1}(m^{2-\delta} - r'_1) - p_{t'_1,s'_1}(l_{r'_1}(m))} = p_{t'_1,s'_1}'(r)(l_{r'_1}(m) - (m^{2-\delta} - r'_1)) = p_{t'_1,s'_1}'(r)O(R_1^{-2\delta/5}) \]
            if $R_1$ is large enough. Indeed, for sufficiently large $R_1$, we have $l_{r'_1}(m) \in [m^{2-\delta} - r'_1 - 1, m^{2-\delta} - r'_1 +1]$ by \eqref{eq:linearization-bound}. Since $m\in I_1$, it follows by \eqref{eq:int-close-to} that
            \[ [m^{2-\delta} - r'_1 - 1, m^{2-\delta} - r'_1 + 1] \subseteq [-(1 +R_1^{1/2-\delta/5}), 1 + R_1^{1/2-\delta/5}]\]
            Thus, it is enough to give an upper bound on the derivative $p_{t'_1,s'_1}'$ on the interval on the right hand side above. Recall that $p_{t'_1,s'_1}(r) = \sum_{\ell=0}^{d_1}c_{1,\ell}r^\ell$, where $c_{1,\ell} = (-1)^\ell{s'_1}^{\ell-1}{t'_1}^{-\ell}$. This implies that
            \[ p_{t'_1,s'_1}'(r) = \sum_{\ell=1}^{d_1}(-1)^\ell\ell{s'_1}^{\ell-1}{t'_1}^{-\ell}r^{\ell-1} \] 
            As $t'_1 = O(1 + 3(R_1)^{-A\delta})= O(1)$ and $s'_1r = O(R_1^{-1/2 + \delta/10}(1 + {R_1}^{1/2-\delta/5})) = O(1)$ it follows that $p_{t'_1,s'_1}'(r) = O(1)$ for all $r\in[-(1 +R_1^{1/2-\delta/5}), 1 + R_1^{1/2-\delta/5}]$.           
        \end{proof}

        By \Cref{lem:linearize-argument}, more precisely by \eqref{eq:bound-on-evaluation}, the statement in \eqref{eq:first-appr} follows, if we can show that
        \begin{align}\label{eq:second-appr} 
            \abs{\frac{1}{\#I}\sum_{(m,n)\in I}f(u_{P_{\br'}(l_{\br'}(m,n))}\omega ') - \int_{U\omega}f(z)\dd\mu_{\omega}(z)}[\bigg] \to 0, \quad \text{as } \norm{(\log\bR)^{-1}}[\infty] \to 0.
        \end{align}

        Finally, we reduce \eqref{eq:second-appr} to a statement about the torus $\TT^2$. Since $\omega = g\Gamma$ is a periodic $U$-orbit we know that $U\cap g\Gamma g^{-1}$ is a lattice in $U$, of covolume $L_\omega$. We may thus define the homomorphism 
        \[ h\colon U \to \TT^2 = \quot{\RR^2}{\ZZ^2}, \]
        given by the natural projection after identifying $\tquot{U}{U\cap g\Gamma g^{-1}}$ with $\TT^2$. More precisely, fixing a $\ZZ$-basis $\bm{b}_1=(b_{11},b_{12}), \bm{b}_2=(b_{21},b_{22})$ of $U\cap g\Gamma g^{-1}$ we define $h(u_{\bs}) = \bs B \in \TT^2$, where
        \[ B =\mat{b_{11}}{b_{12}}{b_{21}}{b_{22}}^{-1} = \frac{1}{L_\omega}\mat{b_{22}}{-b_{12}}{-b_{21}}{b_{11}} \]
        is the inverse of the matrix representation of the $\ZZ$-basis of $U\cap g\Gamma g^{-1}$. We have
        \begin{align}\label{eq:size-of-basis}
            L_\omega = \operatorname{covol}(U\cap g\Gamma g^{-1}) = \det(B^{-1}) \ll \norm{b_1}[\infty]\norm{b_2}[\infty].
        \end{align}
        Further, if $u\omega = u'\omega$, then $u$ and $u'$ differ by an element in $U\cap g\Gamma g^{-1}$ 
        and so $h(u) = h(u')$. Thus, for a fixed $\omega'\in U\omega$ we may define the map $\pi\colon U\omega'\to\TT^2$ by $\pi(u\omega') \defeq h(u)$, identifying the periodic orbit $U\omega'$ with $\TT^2$.

        Let $S_y(x) = y + x\in\TT^2$ be the translation map defined for any $y\in\RR^2$. Observe that we have
        \begin{align}\label{eq:equivariance}
            \pi(uu'\omega') = h(uu') = h(u) + h(u') = S_{h(u)}(\pi(u'\omega'))
        \end{align}
        for all $u,u'\in U$, since $h$ is a homomorphism. For a fixed function $f\in\Lip(X)$ as in \Cref{prop:equi-at-almost-squares} we define $\TTf\colon\TT^2\to\RR$ by $\TTf(x) \defeq f(\pi^{-1}(x))$. Applying Fourier decomposition to $\TTf$ we may write
        \[ \TTf(x) = \sum_{(m,n)\in\ZZ^2}a_{(m,n)}e_{(m,n)}(x), \]
        where $e_{(m,n)}(y) = \exp(2\pi i\langle (m,n),y\rangle)$ is an eigenfunction of the Laplacian on $\TT^2$ and
        \[ a_{(m,n)} = \int_{\TT^2}\TTf(y)e_{(m,n)}(-y)\dd y. \]
        In particular, we have
        \begin{align}\label{eq:0-fourier-coeff}
            a_{(0,0)} = \int_{\TT^2}\TTf(y)\dd y = \int_{U\omega}f(z)\dd\mu_{\omega}(z).
        \end{align}
        Let $\eps>0$. Since $f$ and $\omega$ are fixed, by standard Fourier analytic arguments there exist $K_1,K_2\in\NN$ such that for all $x\in\TT^2$
        \begin{align}\label{eq:TTf-approx}
            \abs{\TTf(x) - \sum_{\substack{\abs{k_1}\leq K_1\\\abs{k_2}\leq K_2}}a_{(k_1,k_2)}e_{(k_1,k_2)}(x)}[\bigg] \leq \eps.
        \end{align}
        Then, \eqref{eq:second-appr} follows immediately from the following \namecref{lem:weyl-sum-bound} in which we use the notations from above. Note that since $U\omega$ is a well-rounded periodic orbit, the lattice $B\ZZ^2$ is well-rounded.
        \begin{lemma}\label{lem:weyl-sum-bound}
            Let $\eps>0$ be given. For sufficiently large $\bR>1$ with $\norm{(\log\bR)^{-1}}[\infty] \to 0$ the following holds. For every $(k_1,k_2)\neq(0,0)\in\ZZ^2$ with $\abs{k_1}\leq K_1$ and $\abs{k_2}\leq K_2$, we have
            \[ \abs{\frac{1}{\#I}\sum_{(m,n)\in I}e_{(k_1,k_2)}\big(P(l_{\br'}(m,n))B\big)}[\bigg] < \frac{\eps}{4K_1K_2}\]
            where
            \[ l_{\br'}(m,n)= \left((2-\delta)(r'_1)^{\tfrac{1-\delta}{2-\delta}}(m - (r'_1)^{\tfrac{1}{2-\delta}}),(2-\delta)(r'_2)^{\tfrac{1-\delta}{2-\delta}}(n - (r'_2)^{\tfrac{1}{2-\delta}})\right), \]
            and $B\ZZ^2$ is a well-rounded lattice.
        \end{lemma}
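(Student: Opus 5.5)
The plan is to exploit the product structure of $I=I_1\times I_2$ and of $P_{\br'}=(p_{t'_1,s'_1},p_{t'_2,s'_2})$, which lets the two–dimensional Weyl sum split into a product of two one–dimensional Weyl sums; it then suffices to make one factor small. Write $P(l_{\br'}(m,n))=(y_1(m),y_2(n))$ with $y_1(m)=p_{t'_1,s'_1}(l_{r'_1}(m))$ and $y_2(n)=p_{t'_2,s'_2}(l_{r'_2}(n))$. Then
\[ \langle (k_1,k_2),(y_1,y_2)B\rangle=\alpha_1y_1+\alpha_2y_2,\qquad (\alpha_1,\alpha_2)^{T}\defeq B(k_1,k_2)^{T}, \]
so
\[ \frac{1}{\#I}\sum_{(m,n)\in I}e_{(k_1,k_2)}\big(P(l_{\br'}(m,n))B\big)=\Big(\frac{1}{\#I_1}\sum_{m\in I_1}e(\alpha_1y_1(m))\Big)\Big(\frac{1}{\#I_2}\sum_{n\in I_2}e(\alpha_2y_2(n))\Big). \]
Each factor has modulus at most $1$. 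Because $B$ is invertible and $(k_1,k_2)\neq(0,0)$, some $\alpha_i$ is nonzero. There are only finitely many $(k_1,k_2)$ with $\abs{k_1}\le K_1$, $\abs{k_2}\le K_2$, and $\omega$ and $B$ are fixed, so there is a constant $c_\omega>0$ such that $c_\omega\le\abs{\alpha_i}\le c_\omega^{-1}$ for the chosen index $i$, uniformly over the relevant $k$. (The well-roundedness of $B\ZZ^2$ gives comparable bounds for both coordinates, but only one nonzero $\alpha_i$ is needed.) Say $i=1$. It then suffices to show
\[ \frac{1}{\#I_1}\Big|\sum_{m\in I_1}e(\alpha_1y_1(m))\Big| \to 0\qquad\text{as } R_1\to\infty. \]

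The next step is to compute the coefficients of $m\mapsto\alpha_1y_1(m)$. Recentre at the first integer $m_0\in I_1$ and write $m=m_0+j$ with $0\le j<N\defeq\#I_1$. By \eqref{eq:bigger-than-1}, $N\asymp R_1^{\frac{\delta+2\delta^2}{10(2-\delta)}}$. Also $l_{r'_1}(m)=\lambda(j+\theta)$, where $\lambda=(2-\delta)r_1'^{\frac{1-\delta}{2-\delta}}$, $\theta\in[0,1)$, and $\lambda N\asymp R_1^{1/2-\delta/5}$. Hence $\alpha_1y_1$ is a polynomial of degree $d_1$ in $j$. Its coefficient $\beta_\ell$ of $j^\ell$ equals $\alpha_1c_{1,\ell}\lambda^\ell$ plus contributions from higher $c_{1,\ell'}$, coming from the shift $\theta$. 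Using $t'_1\asymp1$ and the two-sided bound $s'_1\asymp R_1^{-1/2+\delta/10}$, the higher contributions are of lower order, since $s'_1\lambda\theta\ll1$. Therefore
\[ N^\ell\abs{\beta_\ell}\asymp \abs{\alpha_1}\,s_1'^{\ell-1}(\lambda N)^\ell\asymp R_1^{\frac12-\frac{\delta}{10}-\frac{\ell\delta}{10}}. \]
This quantity is large for $\ell<5\delta^{-1}-1$ and decreases by the fixed factor $R_1^{-\delta/10}$ at each step in $\ell$, while $N$ grows only like a small power $R_1^{c\delta}$. The lower bound on $s'_1$ from \Cref{prop:approaching-omega} is used exactly here: without it, $\beta_\ell$ could be too small to be detected.

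The key step is then to choose a single index $2\le\ell\le d_1$ and some $\eta>0$ (depending only on $\delta$) such that, for all large $R_1$,
\[ N^{-\ell+\eta}\le\abs{\beta_\ell}\le N^{-\eta}. \]
From the displayed growth rates one checks that some $\ell$ in the range just below $5\delta^{-1}$ works; this is where $d_1=\max\{\lceil5\delta^{-1}\rceil,2\}$ enters. Given such an $\ell$, Dirichlet's theorem provides $a/q$ with $q\le N^{\ell-\eta}$ and $\abs{\beta_\ell-a/q}\le 1/(qN^{\ell-\eta})$. The lower bound on $\abs{\beta_\ell}$ forces $a\neq0$, and then the upper bound forces $q\ge N^{\eta}/2$. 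Thus $\beta_\ell$ has a rational approximation with denominator in $[N^{\eta}/2,N^{\ell-\eta}]$. The effective Weyl inequality (\Cref{lem:weyls-inequality}) then gives
\[ \Big|\sum_{j<N}e(\alpha_1y_1(m_0+j))\Big|\ll N^{1-\eta'} \]
for some $\eta'=\eta'(\delta)>0$, which is $<\frac{\eps}{4K_1K_2}\,N$ once $R_1$ is large.

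I expect the main obstacle to be that $\ell$ is not the top degree $d_1$. The classical Weyl inequality controls the sum through the leading coefficient, whereas the coefficient in the good range is an intermediate one. The leading one, $\beta_{d_1}$, is of size about $N^{-d_1}R_1^{-c\delta}$, which is too small to be useful. I would therefore first try to apply \Cref{lem:weyls-inequality} in a form that allows an arbitrary coefficient $\beta_\ell$ with $\ell\ge2$ (a Baker-type version). If that lemma only handles the leading coefficient, my fallback is to shorten the summation range to blocks of length $N'=N^{\gamma}$. On such blocks the contributions of degree $>\ell$ are $o(1)$ uniformly, so $\ell$ becomes effectively the top degree after a first-order error. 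I would then apply the classical inequality block by block, tuning $\gamma$ so that the window condition on $\beta_\ell$ relative to $N'$ still holds.
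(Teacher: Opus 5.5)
Your proof is correct and is essentially the paper's argument. Both split the sum over $I_1\times I_2$, keep one factor with nonzero frequency, compute $N^\ell|\beta_\ell|\asymp R_i^{1/2-\delta/10-\ell\delta/10}$, and take $\ell_0$ to be the largest $\ell$ with positive exponent, so that $N^{-\ell_0+\eta}\le|\beta_{\ell_0}|\le N^{-\eta}$. Two of your refinements are sound: well-roundedness is not needed once $\omega,K_1,K_2$ are fixed, and recentring at an integer correctly handles the non-integral shift ${r'_i}^{1/(2-\delta)}$.

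The obstacle you anticipate does not arise. \Cref{lem:weyls-inequality}, in the Green--Tao form, constrains all coefficients $1\le\ell\le d$ simultaneously. So this window for the single index $\ell_0$ already contradicts its conclusion: for $q\le N^{O(\eta')}$ one has $\|q\beta_{\ell_0}\|_{\ZZ}=q|\beta_{\ell_0}|\ge N^{-\ell_0+\eta}$, exactly as the paper argues. Your Dirichlet step and block fallback are therefore unnecessary.
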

        Before we prove the \namecref{lem:weyl-sum-bound} we finish the proof of \Cref{prop:equi-at-almost-squares}. First, note that using \eqref{eq:equivariance} it holds that
        \begin{align}\label{eq:transferrence}
            \sum_{(m,n)\in I} f(u_{P(l_{\br'}(m,n))}\omega') = \sum_{(m,n)\in I} \TTf(S_{h(u_{P(l_{\br'}(m,n))})}(0,0)).
        \end{align} 
        Further, for $(m,n)\in I$ we have $e_{(k_1,k_2)}(S_{h(u_{P(l_{\br'}(m,n))})}(0,0)) = e_{(k_1,k_2)}\big(P_{\br'}(l_{\br'}(m,n))B\big)$. Thus, by \eqref{eq:0-fourier-coeff}, \eqref{eq:TTf-approx} and \eqref{eq:transferrence}, exchanging sums, applying the triangle inequality and imposing \Cref{lem:weyl-sum-bound} we obtain
        \[ \abs{\frac{1}{\#I}\sum_{(m,n)\in I}f(u_{P_{\br'}(l_{\br'}(m,n))}\omega ') - \int_{U\omega}f(z)\dd\mu_{\omega}(z)}[\bigg] \leq 2\eps. \]
        If $\norm{\bR^{-1}}[\infty] \to 0$, the parameter $\eps$ can be chosen arbitrarily close to $0$, proving \eqref{eq:second-appr}.\\
        This finishes the proof of \Cref{prop:equi-at-almost-squares}.
    \end{proof}

    \begin{proof}[Proof of \Cref{lem:weyl-sum-bound}]
        Recall that $P_{\br'}(r_1,r_2) = (p_{t'_1,s'_1}(r_1),p_{t'_2,s'_2}(r_2)) = \left(\sum_{\ell=0}^{d_1}c_{1,\ell}r_1^\ell,\sum_{\ell=0}^{d_2}c_{2,\ell} r_2^\ell\right)$ and
        \[ l_{\br'}(m,n) = \left(l_{r'_1}(m),l_{r'_2}(n)\right) = \left((2-\delta){r'_1}^{\tfrac{1-\delta}{2-\delta}}(m - {r'_1}^{\tfrac{1}{2-\delta}}),(2-\delta){r'_2}^{\tfrac{1-\delta}{2-\delta}}(n - {r'_2}^{\tfrac{1}{2-\delta}})\right). \]
        Hence, we can write
        \begin{align*} 
            e\big(P_{\br'}(l_{\br'}(m,n))B(k_1,k_2)^t\big) &= e\big(p_{t'_1,s'_1}(l_{r'_1}(m))b_1'\big)\cdot e\big(p_{t'_2,s'_2}(l_{r'_2}(n))b_2'\big),
        \end{align*}
        where $b_1',b_2'\in\RR$ are defined by $B(k_1,k_2)^t = (b_1', b_2')^t$.
        Clearly $\norm{(b_1',b_2')^t}[\infty]\leq \norm{(b_1',b_2')^t}[2] \leq \sqrt{2}\norm{(b_1',b_2')^t}[\infty]$, and since any non-zero element in $B\ZZ^2$ has length at least $\lambda_1(B\ZZ^2)$ we have by the assumption on well-roundedness of $B\ZZ^2$ and by \eqref{eq:succ-bounds} that
        \begin{align}\label{eq:bounds-on-b}
            \sqrt{\frac{2}{3}} L_\omega^{-1/2} \leq \max\{\abs{b_1'}, \abs{b_2'}\} \leq \sqrt{2}\max\{K_1,K_2\}L_\omega^{-1/2}.
        \end{align}
        We further use the product structure $I = I_1\times I_2$, to get
        \begin{align*}
            \abs{\frac{1}{\#I}\sum_{(n,m)\in I}e\big(P(l_{\br'}(m,n))B(k_1,k_2)^t\big)}[\bigg]&\\
            \leq \abs{\frac{1}{\#I_1}\sum_{m\in I_1}e\big(p_{t'_1,s'_1}(l_{r'_1}(m))b_1')}[\bigg]&\abs{\frac{1}{\#I_2}\sum_{n\in I_2}e\big(p_{t'_2,s'_2}(l_{r'_2}(n))b_2'\big)}[\bigg].
        \end{align*}
        The trivial bound is
        \begin{align*}
            \abs{\frac{1}{\#I_i}\sum_{m\in I_i}e\big(p_{t'_i,s'_i}(l_{r'_i}(m))b_i'\big)}[\bigg] \leq 1,
        \end{align*}
        for $i=1,2$. For each $(k_1,k_2)\neq 0$ we choose $i=1,2$ so that $\norm{b_i'}[\infty] = \max\{\abs{b_1'}, \abs{b_2'}\}$. Then, it is sufficient to show that
        \begin{align}\label{eq:weyl-bound}
            \abs{\frac{1}{\#I_i}\sum_{m\in I_i}e\big(p_{t'_i,s'_i}(l_{r'_i}(m))b_i'\big)}[\bigg] < \frac{\eps}{4K_1K_2}.
        \end{align}
        Since $p_{t'_i,s'_i}(m) = \sum_{\ell=0}^{d}c_{\ell,1}m^\ell$, we can write
        \[ p_{t'_i,s'_i}(l_{r'_i}(m))b_i' = \sum_{\ell=0}^{d}b_i'c_{\ell,1}(2-\delta)^\ell {r'_i}^{\tfrac{(1-\delta)\ell}{2-\delta}}(k-{r'_i}^{\tfrac{1}{2-\delta}})^\ell = \widetilde{p}(k-{r'_i}^{\tfrac{1}{2-\delta}}), \]
        where $\widetilde{p}(m) = b_i'\sum_{\ell=0}^{d}\widetilde{c}_{\ell}m^\ell$ with $\widetilde{c}_\ell = c_{\ell,1}(2-\delta)^\ell {r'_i}^{\tfrac{(1-\delta)\ell}{2-\delta}}$. We will finish the proof using the following quantitative version of Weyl's inequality.

        The following reformulation of Weyl's inequality is given in \cite[Proposition 4.3, Lemma 4.4]{GT12}.
        
        \begin{lemma}[Weyl's inequality]\label{lem:weyls-inequality}
            Let $p(m) = \sum_{\ell=0}^{d} c'_\ell m^\ell$ be a polynomial of degree $d$. If
            \[ \abs{\sum_{k\in I'} e\big(p(k-a)\big)}[\bigg] \geq \rho(\# I'), \]
            for an integer interval $I' = \NN\cap[a,b]$ and some $\rho\in(0,\tfrac{1}{2})$, then there exists an integer $1\leq q\leq \rho^{-O_d(1)}$ such that
            \[ \norm{qc'_\ell}[\ZZ] \leq \rho^{-O_d(1)}(\# I')^{-\ell}, \]
            for all $1\leq \ell \leq d$. Here, $\norm{~\cdot~}[\ZZ]$ denotes the distance to the nearest integer.
        \end{lemma}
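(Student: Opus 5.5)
This lemma is quoted from \cite[Proposition 4.3, Lemma 4.4]{GT12}, so within the paper I would simply cite it. To make the citation self-contained, I would prove it by induction on the degree $d$: Weyl differencing for the leading coefficient, followed by a downward induction that peels off one coefficient at a time.

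\textbf{Reduction.} First I reduce to a sum over $m\in\{0,\dots,N-1\}$ with $N=\#I'$, by the substitution $m=k-a$. In our application $a=(r'_i)^{1/(2-\delta)}$ is an integer by the choice of $\br'$, so the substitution is harmless.

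\textbf{Leading coefficient.} If $\abs{\sum_{m<N}e(p(m))}\geq\rho N$, I apply van der Corput/Weyl differencing $d-1$ times, using Cauchy--Schwarz at each step. This yields at least $\rho^{O_d(1)}N^{d-1}$ tuples $(h_1,\dots,h_{d-1})\in[-N,N]^{d-1}$ for which the linear sum $\sum_m e\big(d!\,c'_d h_1\cdots h_{d-1}m+\dots\big)$ is at least $\rho^{O_d(1)}N$. The geometric series bound then gives $\norm{d!\,c'_d h_1\cdots h_{d-1}}[\ZZ]\leq\rho^{-O_d(1)}N^{-1}$ for each such tuple. By the divisor bound, the products $n=d!\,h_1\cdots h_{d-1}$ give $\gg\rho^{O_d(1)}N^{d-1-o(1)}$ distinct values of $n\leq d!N^{d-1}$ with $\norm{nc'_d}[\ZZ]$ small. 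A Vinogradov-type recurrence lemma (if $\norm{n\alpha}[\ZZ]\leq\eps$ for $\geq\rho M$ values $n\leq M$, then there is $q\leq\rho^{-O(1)}$ with $\norm{q\alpha}[\ZZ]\ll\rho^{-O(1)}\eps/M$) then produces $q_d\leq\rho^{-O_d(1)}$ with $\norm{q_dc'_d}[\ZZ]\leq\rho^{-O_d(1)}N^{-d}$.

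\textbf{Lower coefficients.} With $q_d$ fixed, I split $\{0,\dots,N-1\}$ into residue classes mod $q_d$ and then into subintervals of length $\rho^{C}N/q_d$. On each such progression $m=q_dm'+r$, the term $c'_dm^d$ equals a rational phase with denominator $q_d$ plus a part that varies by $O(\rho^{2})$. I remove it, accepting a constant loss in $\rho$. By pigeonholing, some progression carries a sum of size $\rho^{O(1)}$ times its length. The remaining polynomial in $m'$ has degree $d-1$ after absorbing the top term, so the induction hypothesis gives $q'$ controlling the rescaled coefficients $q_d^{\ell}c'_\ell$, modulo shifts coming from $r$ and the subinterval start. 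Taking $q=q_d^{O_d(1)}q'$ and unwinding the rescaling gives $\norm{qc'_\ell}[\ZZ]\leq\rho^{-O_d(1)}N^{-\ell}$ for all $\ell$.

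\textbf{Main obstacle.} I expect the hardest step to be this unwinding. The shift $m=q_dm'+r$ mixes coefficients, so the bound for $c'_{\ell}$ must be recovered from the bounds on the new coefficients by downward induction in $\ell$, controlling the binomial cross terms at each stage. This is exactly the bookkeeping carried out in \cite[Lemma 4.4]{GT12}, and I would follow it.
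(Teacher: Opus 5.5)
Citing \cite{GT12} is exactly what the paper does, since it gives no argument of its own, so as a proposal for the paper this coincides with it. Your remark that $a=(r_i')^{1/(2-\delta)}$ is an integer in the application is in fact necessary, not just harmless. For non-integral $a$ the conclusion fails: take $p(x)=\tfrac12x^2+(\tfrac32-\sqrt2)x$ and $\lceil a\rceil-a=\sqrt2-1$. Then $p(m+\lceil a\rceil-a)=\tfrac{m(m+1)}{2}+\mathrm{const}$, so the sum has full size, yet no $q\le\rho^{-O_d(1)}$ gives $\|q(\tfrac32-\sqrt2)\|_{\ZZ}\le\rho^{-O_d(1)}N^{-1}$ once $N$ is large. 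Your lower-coefficient step and its unwinding also work. The shift $s=q_dm_0'+r$ is an integer with $|s|\le N$. After absorbing a further power of $q_d$, each cross term $\binom{j}{\ell}q_d^{\ell}s^{j-\ell}c'_j$ becomes an integer multiple of a quantity within $\rho^{-O(1)}N^{-j}$ of an integer, with total error $\rho^{-O(1)}|s|^{j-\ell}N^{-j}\le\rho^{-O(1)}N^{-\ell}$.

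The genuine gap is in the leading-coefficient step. Passing from the $\gg\rho^{O(1)}N^{d-1}$ good tuples $(h_1,\dots,h_{d-1})$ to distinct integers $n=d!\,h_1\cdots h_{d-1}$ via the divisor bound leaves a set of good $n\le d!N^{d-1}$ of relative density only $\rho^{O(1)}N^{-o(1)}$. The Vinogradov lemma you quote then yields only $q_d\le\rho^{-O(1)}N^{o(1)}$ and $\|q_dc'_d\|_{\ZZ}\le\rho^{-O(1)}N^{o(1)-d}$. That is the classical $N^{\varepsilon}$-lossy Weyl bound, not the $N$-uniform bound $q\le\rho^{-O_d(1)}$ the lemma asserts. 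Sharper counting cannot rescue this route: already the products $h_1h_2$ with $1\le h_i\le N$ take only $o(N^2)$ distinct values (the Erd\H{o}s multiplication table problem). So any argument that collapses the tuple to a single integer loses a factor that grows with $N$.

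The missing idea is to apply the one-variable Vinogradov lemma one coordinate at a time.
\begin{enumerate}
\item By Fubini, for $\gg\rho^{O(1)}N^{d-2}$ choices of $(h_1,\dots,h_{d-2})$ there are $\gg\rho^{O(1)}N$ values of $h_{d-1}$ with $\|(d!\,c'_dh_1\cdots h_{d-2})\,h_{d-1}\|_{\ZZ}\le\rho^{-O(1)}N^{-1}$.
\item Vinogradov then gives some $q\le\rho^{-O(1)}$ with $\|q\,d!\,c'_dh_1\cdots h_{d-2}\|_{\ZZ}\le\rho^{-O(1)}N^{-2}$.
\item Pigeonholing $q$ among its $\rho^{-O(1)}$ possible values fixes a single $q$ for $\gg\rho^{O(1)}N^{d-2}$ tuples.
\item Repeat with $q\,d!\,c'_d$ in place of $d!\,c'_d$.
\end{enumerate}
Each round costs only a factor $\rho^{O(1)}$ in density and gains a factor $N^{-1}$. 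After $d-1$ rounds you get $\|q_dc'_d\|_{\ZZ}\le\rho^{-O_d(1)}N^{-d}$ with $q_d\le\rho^{-O_d(1)}$, uniformly in $N$. The smallness hypothesis $\varepsilon\ll\text{density}$ in the Vinogradov lemma holds once $N\ge\rho^{-C_d}$; for smaller $N$ the lemma is trivial with $q=1$. This is the standard way to obtain $N$-independent bounds and avoids the divisor bound entirely.
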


        We want to apply \Cref{lem:weyls-inequality} with $p = \widetilde{p}$, $I' = I_i$ and $\rho = \tfrac{\eps}{4K_1K_2}$. In order to deduce \eqref{eq:weyl-bound}, it suffices to show that for some $1\leq\ell_0\leq d$ and all $1\leq q \leq \rho^{-O_d(1)}$ we have
        \begin{align}\label{eq:bound-on-coeff}
            \norm{qb_i'\widetilde{c}_{\ell_0}}[\ZZ] > \rho^{-O_d(1)}(\# I)^{-\ell_0}.
        \end{align}

        First, let us give a bound on $\abs{\widetilde{c}_{\ell}}$. Using that $c_{\ell,1} = (-1)^\ell {s'_i}^{\ell-1}{t'_i}^{-\ell}$ and that $t'_i = O(1)$ we can find a constant $C'=C'(\delta,d,c,C)$ such that
        \begin{align}\label{eq:bounds-on-coeff}
            {C'}^{-1}({t'_i}^{-1}s'_i)^{\ell-1}{r'_i}^{\tfrac{(1-\delta)\ell}{2-\delta}} \leq \abs{\widetilde{c}_{\ell}} \leq C'({t'_i}^{-1}s'_i)^{\ell-1}{r'_i}^{\tfrac{(1-\delta)\ell}{2-\delta}}.
        \end{align}
        We further know that ${t'_i}^{-1}s'_i = O(R_i^{-1/2+\delta/10})$ and $r'_i\in[cR_i,CR_i]$. Combining this with the bounds in \eqref{eq:bounds-on-coeff} and the lower bound in \eqref{eq:bigger-than-1} we get, for every $2\leq \ell \leq d$, that
        \begin{align}\label{eq:lower-bound}
            \begin{split}
                \abs{\widetilde{c}_{\ell}}(\# I)^{\ell} &\gg ({t'_i}^{-1}s'_i)^{\ell-1}R_i^{\tfrac{(1-\delta)\ell}{2-\delta}}(\# I)^{\ell}\\
                &\gg  R_i^{(-1/2+\delta/10)(\ell-1)}R_i^{\tfrac{(1-\delta)\ell}{2-\delta}}R_i^{\tfrac{(\delta+2\delta^2)\ell}{10(2-\delta)}}\\
                &\gg R_i^{\tfrac{10-7\delta+\delta^2 -\delta(2-\delta)\ell}{10(2-\delta)}}
            \end{split}
        \end{align}
        Analogously, using the upper bounds in \eqref{eq:bounds-on-coeff} and \eqref{eq:bigger-than-1}, we obtain
        \begin{align}\label{eq:upper-bound}
            \abs{\widetilde{c}_{\ell}}(\#I)^{\ell} &\ll R^{\tfrac{10-7\delta+\delta^2 -\delta(2-\delta)\ell}{10(2-\delta)}}.
        \end{align}
        We define $\ell_0 \defeq \lfloor{\tfrac{10-7\delta+\delta^2}{\delta(2-\delta)}}\rfloor\geq2$ (the inequality follows from a straightforward calculation) if $\tfrac{10-7\delta+\delta^2}{\delta(2-\delta)}\not\in\NN$ and $\ell_0 \defeq \tfrac{10-7\delta+\delta^2}{\delta(2-\delta)} - 1$ else. In both cases, we obtain
        \[ \delta(2-\delta) \geq 10-7\delta+\delta^2-\delta(2-\delta)\ell_0 > 0. \]
        Thus, we can set $\eta \defeq\tfrac{10-7\delta+\delta^2 -\delta(2-\delta)\ell_0}{10(2-\delta)}\in(0,\tfrac{\delta}{10}]$ and use \eqref{eq:lower-bound} and \eqref{eq:upper-bound} to get that
        \[ {C''}^{-1}R_i^{\eta} \leq \abs{\widetilde{c}_{\ell_0}}(\#I)^{\ell_0} \leq {C''}R_i^{\eta} \]
        for some $C'' = C''(\delta,d,C,c,C')$. The lower bounds on $\ell_0$ and on $\#I$ given in \eqref{eq:bigger-than-1} and the upper bound on $\eta$ then imply that
        \[ \abs{\widetilde{c}_{\ell_0}} \ll R_i^{\eta-\tfrac{2(\delta+2\delta^2)}{10(2-\delta)}} \ll R_i^{\tfrac{-\delta^2}{2(2-\delta)}}. \]
        
        If $R_i$ is sufficiently large, so that $\frac{C''^{-1}\sqrt{2}}{\sqrt{3}}R_i^{\eta/2} > (\tfrac{\eps}{4K_1K_2})^{-O_d(1)} = \rho^{-O_d(1)}$ and $R_i^{\eta/2}\geq L_{\omega}$, we have for all $1\leq q \leq \log R_i$ that
        \[ \norm{qb_i'\widetilde{c}_{\ell_0}}[\ZZ] \geq C''^{-1}R^{\eta/2}(\# I)^{-\ell_0} > \rho^{-O_d(1)}(\# I)^{-\ell_0}, \]
        proving \eqref{eq:bound-on-coeff}. Indeed, we can first use the upper bounds on $\abs{\widetilde{c}_{\ell_0}}$, $q$, and $b_i'$, to conclude that 
        \[ \abs{qb_i'\widetilde{c}_{\ell_0}} \leq q\cdot 2\max\{K_1,K_2\}L_\omega^{-1/2}R_i^{\tfrac{-\delta^2}{2(2-\delta)}} \leq 1/2 \]
        $1\leq q\leq \log R_i$, for sufficiently large $R_i$. If $R_i$ is large enough so that $R_i^{\eta}\geq L_\omega$, we can use the lower bounds on $\widetilde{c}_{\ell_0}(\# I)^{\ell_0}$, $q$ and $b_i'\geq \sqrt{\frac{2}{3}}L_{\omega}^{-1/2}\geq R^{-\eta/2}$ to obtain
        \[ \norm{qb_i'\widetilde{c}_{\ell_0}}[\ZZ] = \abs{qb_i'\widetilde{c}_{\ell_0}} \geq \frac{C''^{-1}\sqrt{2}}{\sqrt{3}}R^{\eta/2}(\#I)^{-\ell_0}.  \]
        This finishes the proof of \Cref{lem:weyl-sum-bound} and thus of \Cref{prop:equi-at-almost-squares}.
    \end{proof}

    \begin{remark}
        Recall that the parameter $\bs'$ defined in \eqref{eq:def-x'} and provided by \eqref{prop:approaching-omega} essentially determines the distances between the $U$-orbits of $u\_\br^kx$ and $\omega$. In particular, this parameter essentially determines the range $\rect{(\bR^k)^{1/2-\delta/5}}$ for which we can shadow the $U$-orbit $\omega$ depicted in \Cref{cor:staying-close-to-closed-orbits} (in particular, see \eqref{eq:bound-on-v_s}). Thus, one might expect very small parameters $\bs'$ to be beneficial for shadowing the periodic $U$-orbit $U\omega$. However, in order to apply Weyl's inequality (\Cref{lem:weyls-inequality}) we need to obtain the lower bound on $\abs{\widetilde{c}_{\ell}}(\# I)^{\ell}$ in \eqref{eq:lower-bound}. To obtain this lower bound the parameter $\bs'$ may not be too small.         
        Similarly, the assumption on well-roundedness of the perioidic orbit $U\omega$ allows to obtain the bounds on $b_i'$ in \eqref{eq:bounds-on-b}. These, too, are necessary in order to apply Weyl's inequality.
    \end{remark}

	\addcontentsline{toc}{section}{References}
	\printbibliography[title=References]
\end{document}